\renewcommand{\(}{\left\(}
\renewcommand{\)}{\right\)}
\renewcommand{\[}{\left\[}
\renewcommand{\]}{\right\]}
\newcommand{\qbinom}[2]{\genfrac{[}{]}{0pt}{}{#1}{#2}}
\renewcommand{\i}{\infty}
\renewcommand{\pmod}[1]{\,(\textup{mod}\,#1)}
\numberwithin{equation}{section}
\theoremstyle{plain}
\newtheorem{theorem}{Theorem}[section]
\newtheorem{lemma}[theorem]{Lemma}
\newtheorem{corollary}[theorem]{Corollary}
\newtheorem{remark}[]{Remark}
\def\proof{\@ifnextchar[{\@oproof}{\@nproof}}
\def\@oproof[#1][#2]{\trivlist\item[\hskip\labelsep\textit{#2 Proof of\
		#1.}~]\ignorespaces}
\def\@nproof{\trivlist\item[\hskip\labelsep\textit{Proof.}~]\ignorespaces}
\def\@tocline#1#2#3#4#5#6#7{\relax
	\ifnum #1>\c@tocdepth 
	\else
	\par \addpenalty\@secpenalty\addvspace{#2}%
	\begingroup \hyphenpenalty\@M
	\@ifempty{#4}{%
		\@tempdima\csname r@tocindent\number#1\endcsname\relax
	}{%
		\@tempdima#4\relax
	}%
	\parindent\z@ \leftskip#3\relax \advance\leftskip\@tempdima\relax
	\rightskip\@pnumwidth plus4em \parfillskip-\@pnumwidth
	#5\leavevmode\hskip-\@tempdima
	\ifcase #1
	\or\or \hskip 1em \or \hskip 2em \else \hskip 3em \fi%
	#6\nobreak\relax
	\dotfill\hbox to\@pnumwidth{\@tocpagenum{#7}}\par
	\nobreak
	\endgroup
	\fi}
\begin{document}
	
\title[Non-Rascoe partitions and a rank parity function]{Non-Rascoe partitions and a rank parity function associated to the Rogers--Ramanujan partitions}
\author{Atul Dixit, Gaurav Kumar and Aviral Srivastava}
\address{Department of Mathematics, Indian Institute of Technology Gandhinagar, Palaj, Gandhinagar 382355, Gujarat, India} 
\email{adixit@iitgn.ac.in; kumargaurav@iitgn.ac.in; aviral.srivastava@iitgn.ac.in}
\thanks{2020 \textit{Mathematics Subject Classification.} Primary 05A17, 11P84; Secondary, 33D99.\\
	\textit{Keywords and phrases.  Rogers--Ramanujan partitions, rank parity function, combinatorics, mock theta functions, partition congruences}}
\begin{abstract}
We study the generating function of the excess number of Rogers--Ramanujan partitions with odd rank over those with even rank, and, using combinatorial and analytical techniques, show that this generating function is closely connected with an interesting class of restricted partitions, namely, partitions into distinct parts where the number of parts is not a part. We derive arithmetic properties of the number of such partitions and conjecture an interesting mod $4$ congruence. Generalizations of most of these results in a parameter $\ell$ are also obtained.    
\end{abstract}
\maketitle
\tableofcontents

\section{Introduction}\label{intro}

A \emph{partition} of a positive integer $n$ is a non-increasing sequence of positive integers whose sum is $n$, with each positive integer forming the partition termed as a \emph{part} of that partition. Dyson's \emph{rank} of a partition is defined as the  largest part of the partition minus the number of parts. For example, the rank of the partition $4+1+1$ of $6$ is $4-3=1$.  

The enumeration of partitions, belonging to a particular class, based on their rank parity is often fruitful since the generating functions involved turn out to be important $q$-series with interesting modular behavior. For instance, the generating function of the excess number of partitions with even rank over those with odd rank is Ramanujan's third order mock theta function $f(q):=\displaystyle\sum_{n=0}^{\infty}\frac{q^{n^2}}{(-q;q)_n^2}$, where, here, and throughout the paper, the following standard $q$-series notation will be adopted:
\begin{align*}
	(a)_0 &:=(a;q)_0 =1, \qquad \\
	(a)_n &:=(a;q)_n  = (1-a)(1-aq)\cdots(1-aq^{n-1}),
	\qquad n \geq 1, \\
	(a)_{\infty} &:=(a;q)_{\i}  = \lim_{n\to\i}(a;q)_n, \qquad |q|<1,\\
	(a)_{-n}&:=\left(1-a/q^n\right)^{-1}\left(1-a/q^{n-1}\right)^{-1}\cdots\left(1-a/q\right)^{-1},\quad n \geq 1,
\end{align*}
with $q$ denoting the base. We will always consider $|q|<1$ in this paper.

The function $f(q)$ is essentially a weight $1/2$ mock modular form. Choi, Kang and Lovejoy \cite{choi-kang-lovejoy} considered the corresponding problem with rank replaced by the Andrews-Garvan crank.

On the other hand, if we consider only the partitions into distinct parts, that is, partitions whose parts differ by at least $1$, then the excess number of such partitions with even rank over those with odd rank is another famous function of Ramanujan \cite{andrews1986} , namely,
\begin{equation*} \sigma(q):=\sum_{n=0}^{\infty}\frac{q^{n(n+1)/2}}{(-q)_n}.
\end{equation*}
This function was first considered by Ramanujan in his Lost Notebook in conjunction with two `sum-of-tails identities' \cite[p.~14]{lnb}. See \cite{andrews1986} for details. It is a prototypical example of a quantum modular form \cite{zagierqmf}.  It has connections to algebraic number theory since its coefficients have properties governed by the arithmetic of $Q(\sqrt{6})$. It was this connection which enabled Andrews, Dyson and Hickerson \cite{adh} to prove two mysterious properties of $\sigma(q)$ first conjectured by Andrews \cite{andrewsmonthly86}. Cohen \cite{cohen} proved that if $	\varphi(q):=q^{1/24}\sigma(q)+q^{-1/24}\sigma^{*}(q)=\sum_{n\in\mathbb{Z}\atop n\equiv1\hspace{1mm}(\text{mod}\hspace{1mm}24)}T(n)q^{|n|/24}$, where $\sigma^{*}(q):=2\sum_{n=1}^{\infty}(-1)^nq^{n^2}/(q;q^2)_n$, then $T(n)$ are the coefficients of a Maass waveform of eigenvalue $1/4$. The literature on $\sigma(q)$ is vast. The reader is referred to \cite{bandix1} for an extensive bibliography.

Now consider partitions in which parts differ by at least $2$. These are called the \emph{Rogers--Ramanujan partitions}\footnote{They are sometimes called Rogers--Ramanujan partitions of type I.}, for, they are generated by the ``sum side'' of the first Rogers--Ramanujan identity \cite{rogers2}, namely,
\begin{align}\label{rr1}
\sum_{n=0}^{\infty}\frac{q^{n^2}}{(q)_n}=\frac{1}{(q;q^5)_{\infty}(q^4;q^5)_{\infty}}.
\end{align}
Here, we are interested in the excess number of Rogers--Ramanujan partitions with even rank over those with odd rank. To obtain its generating function, define $R(m, n)$ to be the number of  Rogers--Ramanujan partitions of $n$ with rank $m$. Our first result is 
\begin{theorem}\label{theoremrrp}
\begin{align*}
	\sum_{n=0}^{\infty} \sum_{m=-\infty}^{\infty} R(m,n) z^m q^n = \sum_{n=0}^{\infty} \frac{z^{n-1}q^{n^2}}{(zq)_n}.
	\end{align*}
\end{theorem}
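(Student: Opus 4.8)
The plan is to sort the Rogers--Ramanujan partitions of $n$ by their number of parts and to identify each class via the classical ``staircase subtraction''. First I would fix $N \ge 1$ and take a Rogers--Ramanujan partition $p_1 > p_2 > \cdots > p_N \ge 1$ with every gap $p_i - p_{i+1} \ge 2$; since this forces $p_i \ge 2(N-i)+1$, subtracting the staircase $2N-1, 2N-3, \ldots, 1$ (that is, removing $2(N-i)+1$ from $p_i$) produces an ordinary partition $\mu_1 \ge \mu_2 \ge \cdots \ge \mu_N \ge 0$ with at most $N$ parts, and this is a bijection onto all such $\mu$, with inverse $p_i := \mu_i + 2(N-i)+1$. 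Under it the size increases by the constant $\sum_{i=1}^{N}(2(N-i)+1) = N^2$, the number of parts stays equal to $N$, and the largest part becomes $p_1 = \mu_1 + 2N-1$, so the rank $p_1 - N$ equals $\mu_1 + N - 1$. Hence the Rogers--Ramanujan partitions with exactly $N$ parts contribute
\begin{equation*}
z^{N-1} q^{N^2} \sum_{\mu}\, z^{\mu_1} q^{|\mu|}
\end{equation*}
to the left-hand side, the sum running over partitions $\mu$ with at most $N$ parts, weighted by $q$ to the size and by $z$ to the largest part (with $\mu_1 := 0$ for the empty $\mu$).

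Next I would evaluate this inner sum. The crucial point is that only the \emph{largest} part of $\mu$ carries a $z$, and conjugation turns that awkward statistic into a manageable one: passing to $\nu = \mu'$ identifies partitions with at most $N$ parts with partitions all of whose parts lie in $\{1, \ldots, N\}$, it replaces the largest part $\mu_1$ of $\mu$ by the number of parts of $\nu$, and it preserves the size. Thus the inner sum is the generating function for partitions into parts from $\{1, \ldots, N\}$ in which each part is given an extra factor $z$, and choosing the multiplicity of each allowed part value independently factors it as
\begin{equation*}
\prod_{j=1}^{N} \sum_{k \ge 0} (z q^{j})^{k} = \prod_{j=1}^{N} \frac{1}{1 - z q^{j}} = \frac{1}{(zq)_N}.
\end{equation*}
Summing the resulting contributions $z^{N-1} q^{N^2}/(zq)_N$ over $N \ge 1$, the $N = 0$ term being the contribution of the empty partition (to which the rank formula $\mu_1 + N - 1$ consistently assigns the rank $-1$), yields the asserted series. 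The interchange of summations is harmless, either as an identity of formal power series in $q$ --- each power of $q$ receiving only finitely many contributions --- or, once $1/(zq)_N$ is expanded, as an absolutely convergent rearrangement for $|q| < 1$ and $|z|$ small.

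I do not anticipate a genuine obstacle; the argument is essentially bookkeeping once the decomposition by number of parts is in place. The step most prone to error is the inner sum: it is tempting, but wrong, to weight every part of $\mu$ by $z$, since the rank only records the largest part, and it is precisely the conjugation trick --- trading ``size of the largest part'' for ``number of parts'' --- that makes the Euler-type product appear. Two smaller points deserve a line of care: the bijection does legitimately output partitions with some parts equal to $0$ whenever $\mu$ has fewer than $N$ nonzero parts, and the $N = 0$ boundary term must be reconciled with whatever convention one adopts for the rank of the empty partition; neither affects the substance of the proof.
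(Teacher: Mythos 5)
Your proposal is correct and is essentially the paper's own argument: both decompose by the number of parts $N$, peel off the staircase $1,3,\dots,2N-1$ contributing $q^{N^2}$ and $z^{N-1}$, and generate the leftover region by $1/(zq)_N$ with $z$ tracking the width (largest part minus $2N-1$), which is exactly your conjugation step made pictorial in the paper's Ferrers-diagram proof. No substantive difference.
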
 
Thus, letting $z=-1$ in the above result, we find that the generating function of the excess number of Rogers--Ramanujan partitions with \emph{odd} rank over those with \emph{even} rank is the function
\begin{align}\label{sigma2(q) defn}
	\sigma_2(q):=\sum_{n=0}^{\infty} \frac{(-1)^nq^{n^2}}{(-q)_n}=1-q+q^2-q^3+2q^4-2q^5+q^6-q^7+2q^8-3q^9+\cdots.
\end{align}
We call $\sigma_2(q)$ as a \emph{rank parity function associated to the Rogers--Ramanujan partitions}.

Let us illustrate this special case $z=-1$  of Theorem \ref{theoremrrp} with an example. Let $n=9$. The only Rogers--Ramanujan partition with odd rank is $7+2$ whereas the Rogers--Ramanujan partition with even rank are $9, 8+1, 6+3$ and $5+3+1$. Hence the excess number of Rogers--Ramanujan partitions with odd rank over those with even rank is $1-4=-3$, which agrees with the coefficient of $q^{9}$ in \eqref{sigma2(q) defn}.
 
 Had the $(-1)^n$ in the summand of the series in \eqref{sigma2(q) defn} been absent, the above sum would have been Ramanujan's fifth order mock theta function $f_0(q)$, which has been extensively analyzed from the perspective of mock modular forms, $q$-series and partitions. On the other hand, the function $\sigma_2(q)$ seems to have been conspicuously missing from the literature. One of the goals of this paper is to undertake a comprehensive study of this function from combinatorial as well as analytical point of view. It will be seen here that $\sigma_2(q)$ enjoys several interesting properties and so it will not be surprising if it eventually ends up being in the same league as $\sigma(q)$ in the future.

Before discussing the combinatorics associated with $\sigma_2(q)$ though, we  introduce new restricted partitions introduced in 2024 by John Tyler Rascoe \cite{jtr1}, which we term \emph{Rascoe partitions}. These are the partitions of a positive integer into distinct parts in which the number of parts is itself a part of the partition. For example, if $n=11$, then the admissible partitions are $9+2$, $7+3+1$ and $ 6+3+2$. Let $a(n)$ denote the number of Rascoe partitions of $n$. Clearly, $a(0)=0$. Rascoe stated \cite{jtr1}, without proof, that
	\begin{align*}
	\sum_{n=1}^{\infty} a(n) q^n = \sum_{n=1}^{\infty} q^{n(n+1)/2}(q)_{n-1} \sum_{m=1}^{n} \frac{q^{m(m-1)}}{(q)_{n-m}(q)_{m-1}^2}.
\end{align*}
Next, we define a \emph{non-Rascoe partition} of a positive integer $n$  to be a partition of $n$ into distinct parts in which the number of parts is \emph{not} a part of the partition. For example, if $n=11$, then the non-Rascoe partitions of $11$ are  $11, 10+1, 8+3, 8+2+1, 7+4, 6+5, 6+4+1, 5+4+2$ and $5+3+2+1$. Let $b(n)$ denote the number of non-Rascoe partitions of $n$. We take $b(0)=1$ because the empty partition has $0$ parts which is, of course, not its part.

The sequence $\{b(n)\}_{n=1}^{\infty}$ has been tabulated in the OEIS \cite{jtr2}. Clearly,
 \begin{align*}
 \sum_{n=0}^{\infty} b(n) q^n =(-q)_{\infty}-\sum_{n=1}^{\infty} q^{n(n+1)/2}(q)_{n-1} \sum_{m=1}^{n} \frac{q^{m(m-1)}}{(q)_{n-m}(q)_{m-1}^2}.
\end{align*}
Our second result gives a surprising new connection between the generating function of $b(n)$ and $\sigma_2(q)$ as shown in the next theorem.
\begin{theorem}\label{theorem main}
\begin{align*}
\sum_{n=0}^{\infty} b(n) q^n =(-q)_{\infty}\sigma_2(q)=\sum_{n=0}^{\infty}(-1)^nq^{n^2}(-q^{n+1})_{\infty}.
\end{align*}
In other words, the non-Rascoe partitions are generated by $(-q)_{\infty} \sigma_2(q)$.
\end{theorem}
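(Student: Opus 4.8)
The second equality is essentially free: substituting $(-q)_{\infty}=(-q)_n\,(-q^{n+1})_{\infty}$ into each term of $\sigma_2(q)=\sum_{n\ge0}(-1)^nq^{n^2}/(-q)_n$ immediately gives $(-q)_{\infty}\sigma_2(q)=\sum_{n\ge0}(-1)^nq^{n^2}(-q^{n+1})_{\infty}$. So the real content is the first equality $\sum_{n\ge0}b(n)q^n=(-q)_{\infty}\sigma_2(q)$, which I would prove by a short chain of classical $q$-series manipulations, carried out throughout as identities of formal power series in $q$ (so that all interchanges of summation are automatically legitimate).

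The plan is to take as input the identity $\sum_{n\ge0}b(n)q^n=(-q)_{\infty}-A(q)$ recorded just before the statement, where
\begin{equation*}
A(q)=\sum_{k\ge1}q^{k(k+1)/2}(q)_{k-1}\sum_{m=1}^{k}\frac{q^{m(m-1)}}{(q)_{k-m}(q)_{m-1}^2}
\end{equation*}
is Rascoe's generating function for $a(n)$, and to reduce the theorem to the single identity
\begin{equation*}
A(q)=\sum_{M\ge1}(-1)^{M-1}q^{M^2}(-q^{M+1})_{\infty}.
\end{equation*}
Granting this, $\sum_{n\ge0}b(n)q^n=(-q)_{\infty}-A(q)=\sum_{M\ge0}(-1)^Mq^{M^2}(-q^{M+1})_{\infty}$ (the $M=0$ term supplying the $(-q)_{\infty}$), and this is exactly $(-q)_{\infty}$ times the series in \eqref{sigma2(q) defn}. (If one wishes to avoid invoking Rascoe's yet‑unproved formula, $A(q)$ — and with it Rascoe's identity — can be obtained directly by dissecting a partition into $k$ distinct parts according to whether, and in which slot, the integer $k$ appears among the parts.)

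To prove the displayed identity for $A(q)$, I would interchange the two summations, substitute $k=m+j$, and use $(q)_{m+j-1}=(q)_{m-1}(q^m;q)_j$ and $(m+j)(m+j+1)/2=\binom{m+1}{2}+\binom j2+(m+1)j$; this extracts, up to the factor $q^{\binom{m+1}{2}}(q)_{m-1}$, the inner sum $S_m(q):=\sum_{j\ge0}\frac{(q^m;q)_j}{(q)_j}q^{\binom j2}(q^{m+1})^j$. The decisive step is to sum $S_m(q)$ in closed form: $S_m(q)$ is a ${}_1\phi_1$‑type ($q$‑Kummer) series, and expanding $(q^m;q)_j$ by the $q$‑binomial theorem and re‑summing should yield $S_m(q)=\sum_{i\ge0}\frac{(-1)^iq^{i^2+2mi}}{(q)_i}(-q^{m+i+1})_{\infty}$. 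Putting this back into $A(q)$ and simplifying the exponents via $m(m-1)+\binom{m+1}{2}=\frac{3m^2-m}{2}$ and $\frac{3m^2-m}{2}+i^2+2mi=(m+i)^2+\binom m2$, the double sum over $(m,i)$ regroups — setting $M:=m+i$ — into
\begin{equation*}
A(q)=\sum_{M\ge1}q^{M^2}(-q^{M+1})_{\infty}\sum_{m=1}^{M}\frac{(-1)^{M-m}q^{\binom m2}}{(q)_{m-1}(q)_{M-m}}.
\end{equation*}
Lastly, writing $l=m-1$ and $\binom{l+1}{2}=\binom l2+l$, the finite $q$‑binomial theorem $\sum_{l=0}^{N}\qbinom Nl q^{\binom l2}x^l=(-x;q)_N$ applied with $N=M-1$ and $x=-q$ collapses the inner sum to $(-1)^{M-1}(q)_{M-1}/(q)_{M-1}=(-1)^{M-1}$, which is precisely the required identity for $A(q)$.

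The step I expect to be the genuine obstacle is the closed‑form summation of $S_m(q)$, together with the accompanying observation that after re‑insertion the exponent of $q$ in the double sum equals the perfect square $(m+i)^2$ plus a term depending on $m$ alone — it is this that makes the regrouping produce $q^{M^2}$ and thus lets the finite $q$‑binomial theorem close the argument; everything past those two observations is routine bookkeeping. It would be more satisfying to have a purely combinatorial proof, namely a sign‑reversing involution on the set of pairs $(N,\mu)$ with $\mu$ a partition into distinct parts larger than $N$, weighted by $(-1)^Nq^{N^2+|\mu|}$, whose fixed points are exactly the non‑Rascoe partitions; the dissection mentioned above makes it plausible that such an involution exists, in the spirit of the combinatorics surrounding $\sigma(q)$.
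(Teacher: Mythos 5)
Your proposal is correct, and the computations you flagged as uncertain do in fact go through: expanding $(q^m;q)_j$ by the $q$-binomial theorem and resumming with Euler's theorem gives exactly your claimed closed form $S_m(q)=\sum_{i\ge0}\frac{(-1)^iq^{i^2+2mi}}{(q)_i}(-q^{m+i+1})_{\infty}$, the exponent identity $\tfrac{3m^2-m}{2}+i^2+2mi=(m+i)^2+\binom{m}{2}$ is right, and the finite $q$-binomial theorem collapses the inner sum to $(-1)^{M-1}$, so $A(q)=\sum_{M\ge1}(-1)^{M-1}q^{M^2}(-q^{M+1})_{\infty}$ and the theorem follows. However, your route is genuinely different from the paper's. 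The paper proves the theorem by matching two double series: it first dissects the Ferrers diagram of a non-Rascoe partition into four regions to get $\sum_N b(N)q^N=\sum_{n\ge1}q^{n(n+1)/2}\sum_{m}\qbinom{n-1}{m-1}\frac{q^{m^2}}{(q)_m}$ directly (its Theorem \ref{theoremrp-thm-1}(ii)), and then shows analytically that $(-q)_\infty\sigma_2(q)$ equals the same double series, via a conjugation argument (Theorem \ref{theorem0}) combined with a specialization of Andrews' finite Heine transformation (Theorem \ref{finite srivastava} and \eqref{finite41}). You instead use complementary counting $b=(\text{distinct})-a$ together with Rascoe's double-sum generating function for $a(n)$, and collapse that sum by elementary rearrangement, Euler's theorem, and the finite $q$-binomial theorem. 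Each approach buys something: the paper's yields byproducts of independent interest (two new representations of $\sigma_2(q)$, the finite Bhargava--Adiga/Srivastava identity, and Theorem \ref{p(i,n)}), and gives the double series for $b(n)$ a direct combinatorial meaning; yours is shorter and needs only the two classical summation theorems, at the cost of taking Rascoe's formula as input --- a harmless dependence, since the paper itself proves that formula combinatorially (Theorem \ref{theoremrp-thm-1}(i)), which is essentially the dissection you sketch in your parenthetical remark.
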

One may wonder what do we get if we perform the same exercise as above but without the restriction that the parts of a partition be distinct. Indeed, as will be shown in Section \ref{analogues}, the generating function of the number of partitions in which the number of parts is not a part is simply $(1-q+q^2)/(q)_{\infty}$. 

Two applications of Theorem \ref{theorem main} are given in the following two theorems.


 \begin{theorem}\label{non-rascoe odd}
	The number of non-Rascoe partitions of a number $n$ is odd if and only if $n=m(5m+1)/2$, where $m\in\mathbb{Z}$.
\end{theorem}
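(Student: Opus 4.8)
The plan is to determine the parity of $b(n)$ by reducing the generating function modulo $2$. I would start from the second expression in Theorem~\ref{theorem main}, namely $\sum_{n\ge 0}b(n)q^n=\sum_{n\ge 0}(-1)^nq^{n^2}(-q^{n+1})_{\infty}$. Since $1+q^k\equiv 1-q^k\pmod 2$ as formal power series, every partial product stabilizes coefficientwise, so $(-q^{n+1})_{\infty}\equiv(q^{n+1};q)_{\infty}\pmod 2$; together with $(-1)^n\equiv 1$ this gives
\begin{align*}
\sum_{n\ge 0}b(n)q^n\equiv\sum_{n\ge 0}q^{n^2}(q^{n+1};q)_{\infty}=(q)_{\infty}\sum_{n\ge 0}\frac{q^{n^2}}{(q)_n}\pmod 2,
\end{align*}
where I have used $(q^{n+1};q)_{\infty}=(q)_{\infty}/(q)_n$ to pull out the common factor $(q)_{\infty}$.

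Next I would invoke the first Rogers--Ramanujan identity \eqref{rr1} to replace the inner sum by $1/\bigl((q;q^5)_{\infty}(q^4;q^5)_{\infty}\bigr)$, and then split $(q)_{\infty}=\prod_{k\ge 1}(1-q^k)$ according to residues modulo $5$, i.e. $(q)_{\infty}=(q;q^5)_{\infty}(q^2;q^5)_{\infty}(q^3;q^5)_{\infty}(q^4;q^5)_{\infty}(q^5;q^5)_{\infty}$. The factors $(q;q^5)_{\infty}$ and $(q^4;q^5)_{\infty}$ then cancel, leaving
\begin{align*}
\sum_{n\ge 0}b(n)q^n\equiv(q^2;q^5)_{\infty}(q^3;q^5)_{\infty}(q^5;q^5)_{\infty}\pmod 2.
\end{align*}
Now the Jacobi triple product in the form $(a;q)_{\infty}(q/a;q)_{\infty}(q;q)_{\infty}=\sum_{m\in\mathbb{Z}}(-1)^m a^m q^{m(m-1)/2}$, applied with $q\mapsto q^5$ and $a=q^2$, converts the right-hand side into $\sum_{m\in\mathbb{Z}}(-1)^m q^{m(5m-1)/2}$; sending $m\mapsto -m$ and using $(-1)^m\equiv 1\pmod 2$ yields
\begin{align*}
\sum_{n\ge 0}b(n)q^n\equiv\sum_{m\in\mathbb{Z}}q^{m(5m+1)/2}\pmod 2.
\end{align*}

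Finally I would verify that the exponents $m(5m+1)/2$, $m\in\mathbb{Z}$, are pairwise distinct: if $m(5m+1)/2=m'(5m'+1)/2$ then $(m-m')\bigl(5(m+m')+1\bigr)=0$, which forces $m=m'$. Hence the coefficient of $q^n$ on the right-hand side is $1$ when $n=m(5m+1)/2$ for some $m\in\mathbb{Z}$ and $0$ otherwise, giving the claimed characterization. I do not expect a genuine obstacle in this argument; the only points needing a little care are the coefficientwise legitimacy of the mod $2$ reductions of infinite products (immediate from stabilization) and choosing the specialization of the triple product so that the quadratic exponent comes out exactly as $m(5m+1)/2$.
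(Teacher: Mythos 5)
Your argument is correct and follows essentially the same route as the paper: reduce the generating function $(-q)_{\infty}\sigma_2(q)$ modulo $2$ to $(q)_{\infty}\sum_{n\ge 0}q^{n^2}/(q)_n$, apply the first Rogers--Ramanujan identity, and then the Jacobi triple product to obtain $\sum_{m\in\mathbb{Z}}q^{m(5m+1)/2}\pmod 2$. The only (welcome) extra touch is your explicit check that the exponents $m(5m+1)/2$ are pairwise distinct, which the paper leaves implicit.
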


\begin{theorem}\label{p(i,n)}
	Let $P(j,n)$ be the number of partitions of $n$ into distinct parts with the exception that the smallest part, say $j$, is allowed to repeat exactly $j$ number of times. Then the excess number of such partitions with even smallest part over those with odd smallest part equals the number of non-Rascoe partitions of $n$, that is,
	\begin{equation*}
		\sum_{j=0}^{n}(-1)^jP(j, n)=b(n),
	\end{equation*}
	where, $P(0, n)$ is, clearly, the number of partitions of $n$ into distinct parts.
\end{theorem}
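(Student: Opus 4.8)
The plan is to read off the right-hand side of Theorem~\ref{theorem main}, namely $\sum_{n=0}^{\infty}(-1)^{n}q^{n^{2}}(-q^{n+1})_{\infty}$, as a signed generating function for the numbers $P(j,n)$, and then to compare coefficients of $q^{n}$ on the two sides.

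First I would fix $j\geq 1$ and compute $\sum_{n\geq 0}P(j,n)q^{n}$. A partition counted by $P(j,n)$ is built from exactly $j$ copies of the part $j$, which contribute a factor $q^{j\cdot j}=q^{j^{2}}$ to the weight, together with a (possibly empty) collection of distinct parts all strictly larger than $j$; the latter are generated by $\prod_{k\geq j+1}(1+q^{k})=(-q^{j+1})_{\infty}$. Hence
\begin{equation*}
\sum_{n=0}^{\infty}P(j,n)q^{n}=q^{j^{2}}(-q^{j+1})_{\infty}, \qquad j\geq 1,
\end{equation*}
and the same identity holds for $j=0$ by the stated convention, since $P(0,n)$ counts partitions of $n$ into distinct parts and $q^{0}(-q^{1})_{\infty}=(-q)_{\infty}$.

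Next I would multiply by $(-1)^{j}$ and sum over $j\geq 0$. Because $P(j,n)=0$ whenever $j^{2}>n$, the coefficient of $q^{n}$ on the left involves only finitely many values of $j$, so interchanging the two summations is legitimate, and it gives
\begin{equation*}
\sum_{n=0}^{\infty}\left(\sum_{j=0}^{n}(-1)^{j}P(j,n)\right)q^{n}=\sum_{j=0}^{\infty}(-1)^{j}q^{j^{2}}(-q^{j+1})_{\infty}.
\end{equation*}
By Theorem~\ref{theorem main} the right-hand side equals $\sum_{n=0}^{\infty}b(n)q^{n}$, and comparing coefficients of $q^{n}$ yields $\sum_{j=0}^{n}(-1)^{j}P(j,n)=b(n)$, as claimed.

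Once Theorem~\ref{theorem main} is in hand this argument is essentially bookkeeping, and there is no genuine analytic obstacle. The only point deserving care is the weight of the repeated smallest part: one must verify that $j$ copies of $j$ contribute precisely $q^{j^{2}}$, which is exactly what makes the exponents line up with those appearing in the series $\sum_{n}(-1)^{n}q^{n^{2}}(-q^{n+1})_{\infty}$, together with the correct handling of the $j=0$ boundary case via the stated convention.
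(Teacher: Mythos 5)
Your proposal is correct and follows essentially the same route as the paper: the identity $\sum_{n}P(j,n)q^{n}=q^{j^{2}}(-q^{j+1})_{\infty}$ you derive is exactly the paper's generating function \eqref{GenP} (there with a variable $z$ tracking the smallest part), and the paper likewise concludes by setting $z=-1$ and invoking Theorem \ref{theorem main}. Your handling of the $j=0$ convention and the interchange of summations is fine.
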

	We now illustrate the above theorem by means of an example. Let $n=11$.   As demonstrated after defining non-Rascoe partitions, $b(11)=9$. On the other hand, since $P(j, n)$ denotes the number of partitions into distinct parts with the exception that the frequency (number of appearances) of the smallest part $j$ is equal to $j$, we have $P(0,11)=12$, namely, the number of partitions of $11$ into distinct parts. Moreover, $P(1,11)=5$ as the number of admissible partitions are $10+1, 8+2+1, 7+3+1, 6+4+1$ and $5+3+2+1$. Also, $P(2, 11)=2$, since only $7+2+2$ and $4+3+2+2$ are admissible. Then $P(0,11)-P(1,11)+P(2,11)=12-5+2=9$, which agrees with $b(11)$.
	 
We note in passing that Andrews and El Bachraoui \cite[Theorem 1]{andrews-bachraoui jmaa} have recently studied partitions where the smallest part appears exactly $k$ times, where $k\in\mathbb{N}$ is fixed, and the remaining parts are distinct. They have shown that the generating function of the number of such partitions is a linear combination of $q$-shifted factorials with polynomials in the variable $q$ as coefficients.\\
 
 The parity of $b(n)$, that is, the number of non-Rascoe partitions of $n$, was precisely found in Theorem \ref{non-rascoe odd}. One may wonder if  there exist congruences that $b(n)$ satisfies over certain arithmetic progressions. We conducted a numerical search for such congruences up to moduli $\leq 31$ and found only one congruence. It is a rather exotic one. We are unable to prove it as of now. Hence we state it as a conjecture.\\
  
 \noindent
 \textbf{Conjecture 1:} Let $b(n)$ denote the number of non-Rascoe partitions of an integer $n$. For $k\geq1$ and $k$ not a multiple of $29$, the following congruence holds:
 \begin{align*}
 b(29k+21)\equiv0\pmod{4}.
 \end{align*}
 \begin{remark}
 	This conjecture has been verified for  the first $10^5$ values in $\{b(n)\}_{n=0}^{\infty}$.
 \end{remark}
\begin{remark}\label{exceptions}
If $k$ is a multiple of $29$, say, $k=29m$, then $b(29k+21)$ may or may not be divisible by $4$. For example, for $0\leq m\leq 118$, $b(29\cdot29m+21)$ is divisible by $4$ when $m=0, 1, 7, 8, 13, 19, 22, 27, 28, 29, 32, 37, 41, 44, 47, 48, 49, 50, 51, 52, 53, 57, 64, 67, 69, 74, 75, 76, 77, 78, 79,\\ 81, 82, 83, 84, 85, 89, 95, 100, 102, 104, 106, 108, 109, 115, 116, 117$ and $118$, and is not divisible by $4$ for the remaining values of $m$.
\end{remark}
 
 \begin{remark}
If Conjecture $1$ is valid, then, along with Theorem \ref{non-rascoe odd}, it implies that a number of the form $29n+21$, where $n\neq 29j$ can never be written in the form $m(5m+1)/2$ for any $m\in\mathbb{Z}$. This, can, of course, be proved using elementary number theory. 
 \end{remark}	
 
One of the important features of Ramanujan's rank parity function $\sigma(q)$ is that it has a Hecke-Rogers type representation, namely \cite[Theorem 1]{adh},
\begin{align*}
\sigma(q)=\sum_{n\geq 0\atop |j|\leq n }^{\infty}(-1)^{n+j}q^{n(3n+1)/2-j^2}(1-q^{2n+1}).
\end{align*}
A general $q$-series identity obtained by Liu \cite[Theorem 1.9]{Liu} not only gives, as a special case,  an identity \cite[Corollary 6.1]{Liu} which is a source of obtaining Hecke-Rogers type representations for many $q$-series, but also the  representation (i) for $\sigma_2(q)$ given in the following theorem. Moreover, an identity of Andrews \cite[Theorem 18, Equation (12.3)]{AndPPI} involving the little $q$-Jacobi polynomials results in the second representation for $\sigma_2(q)$.
 \begin{theorem} \label{hecke-type}
 	\begin{align*}
 		\textup{(i)}\hspace{5mm} \sigma_2(q)&=\frac{1}{(-q)_{\infty}} \bigg\{ 1+ \sum_{n=1}^{\infty}\frac{(-1)^nq^{n(5n-1)/2}(1+q^{2n})}{(1+q^n)} \sum_{j=0}^{n} \frac{(-1)_j}{(q)_j}\left(-q^{1-n}\right)^j \bigg\},\\
 		\textup{(ii)}\hspace{5mm} \sigma_2(q)&=\frac{1}{(-q)_{\infty}} \bigg\{ 1-\sum_{n=1}^{\infty} \frac{(-1)^nq^{n(5n-1)/2}(1+q^{2n})}{(1-q^n)} \sum_{m=0}^{n-1} \frac{(-1)_m}{(q)_m} (-q^{-n})^m \bigg\}. 
 	\end{align*}
 \end{theorem}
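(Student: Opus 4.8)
The plan is to obtain each representation by specializing a known general $q$-series transformation, taking as anchor either the defining series $\sigma_2(q)=\sum_{n\ge0}(-1)^nq^{n^2}/(-q)_n$ or, equivalently, the form $(-q)_\infty\sigma_2(q)=\sum_{n\ge0}(-1)^nq^{n^2}(-q^{n+1})_\infty$ from Theorem \ref{theorem main}. A small piece of orientation first: the quintic exponent decomposes as $n(5n-1)/2=n^2+n(3n-1)/2$, so it is the superposition of the $q^{n^2}$ occurring in the summand of $\sigma_2(q)$ with a generalized pentagonal number; this is the signature of an argument that grafts a pentagonal/triple-product identity onto the series for $\sigma_2(q)$, and it indicates roughly which parameter limits to aim for.

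For part (i) I would start from Liu's general transformation \cite[Theorem 1.9]{Liu} --- the same identity whose specialization \cite[Corollary 6.1]{Liu} is a standard device for producing Hecke--Rogers type representations --- and choose its free parameters so that one side collapses to $(-q)_\infty\,\sigma_2(q)$ while the other side reproduces the outer weight $(-1)^nq^{n(5n-1)/2}(1+q^{2n})/(1+q^n)$ together with the finite inner sum $\sum_{j=0}^{n}\frac{(-1)_j}{(q)_j}(-q^{1-n})^j$. Concretely, this means sending the relevant base parameters to $\pm1$ (which is what produces the factors $(-1)_j$, $1+q^{2n}$ and $1+q^n$) and specializing the remaining free argument to the power of $q$ forced by matching $n^2$ against $n(5n-1)/2$; under this specialization the inner series of Liu's identity becomes precisely the displayed inner sum. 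Dividing through by $(-q)_\infty$ then yields the stated identity, the isolated ``$1$'' being the $n=0$ term.

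For part (ii) I would instead use Andrews' identity \cite[Theorem 18, Equation (12.3)]{AndPPI}, which involves the little $q$-Jacobi polynomials $p_m(x;a,b\mid q)={}_2\phi_1(q^{-m},abq^{m+1};aq;q,xq)$. The strategy is the same kind of parameter specialization: choose $a$, $b$, $x$ so that the little $q$-Jacobi polynomial $p_{n-1}$, written out via its ${}_2\phi_1$, reduces to $\sum_{m=0}^{n-1}\frac{(-1)_m}{(q)_m}(-q^{-n})^m$ --- this points to choices along the lines of $a=q^{-n}$, $b=-1$ --- while its coefficient in Andrews' expansion matches $-(-1)^nq^{n(5n-1)/2}(1+q^{2n})/(1-q^n)$ and the other side of (12.3) resums to $(-q)_\infty\,\sigma_2(q)$; the orthogonality/generating-function structure of the little $q$-Jacobi polynomials is what performs the rearrangement turning the single sum for $\sigma_2(q)$ into the double sum. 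As a cross-check, and a possible shortcut, representations (i) and (ii) ought to be interconvertible by a transformation of the terminating inner series --- a reversal of the order of summation together with a contiguous relation, which replaces the range $0\le j\le n$ and the factor $1+q^n$ by the range $0\le m\le n-1$ and the factor $1-q^n$ --- so that once either is proved the other follows from a purely finite identity.

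The step I expect to be the main obstacle is the bookkeeping of the specialization: pinning down the exact substitution that \emph{simultaneously} collapses one side of Liu's, respectively Andrews', identity to $(-q)_\infty\,\sigma_2(q)$ and reproduces both the precise outer weight and the precise finite inner sum, all while keeping every interchange of summation and passage to a limit legitimate for $|q|<1$. A concluding sanity check is to expand both sides of (i) and (ii) as power series in $q$ and compare them with \eqref{sigma2(q) defn} through enough coefficients to rule out sign, shift, or normalization errors in the exponents.
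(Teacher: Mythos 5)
Your choice of source identities coincides with the paper's: part (i) is indeed obtained from Liu's Theorem 1.9, with the concrete specialization $\alpha=c=-1$ and $a,b\to 0$, and at that level your plan for (i) is essentially the paper's one-line proof. The genuine gap is in part (ii). In Andrews' identity (12.3) the route to $\sigma_2(q)$ is not a choice like $a=q^{-n}$, $b=-1$ making the little $q$-Jacobi polynomial ``reduce to'' the displayed inner sum; rather, one sets $y=-1$ and must then let $a\to-1$, and this limit is singular: the inner series $\sum_{m\ge0}\frac{(q^{-n})_m(aq^n)_m(-q)^m}{(-a)_m(q)_m}$ has $(-a)_m$ in its denominator, which vanishes at $a=-1$, against the explicit factor $(1+a)$ outside. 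The paper resolves this by first transforming the inner sum with Heine's second iterate \eqref{second iterate} (trading the offending $(-a)_m$ for $(-q^{1-n})_m$), and then evaluating the resulting \emph{nonterminating} series $\sum_{m\ge0}\frac{(q^{-n})_m}{(-q^{1-n})_m}q^{nm}$ by Andrews' ``incomplete'' summation \cite[Equation (2.15)]{AndPTF} with $N=n-1$; it is this step, not any orthogonality or generating-function property of the little $q$-Jacobi polynomials, that produces the terminating sum $\sum_{m=0}^{n-1}\frac{(-1)_m}{(q)_m}(-q^{-n})^m$ and the factor $(1+q^n)$ which, combined with $(a^2q^2;q^2)_{n-1}$ and $(1-aq^{2n})$ at $a=-1$, yields the weight $(1+q^{2n})/(1-q^n)$ and the exponent $n(5n-1)/2$. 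Without identifying this singular limit and the incomplete-summation device, your plan for (ii) does not go through as described.

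Your fallback of deriving one representation from the other ``by a reversal of the order of summation together with a contiguous relation'' is right in spirit but understates the work: the needed finite identity is precisely \eqref{theoremfinite1}, which the paper proves separately in the same section starting from \cite[Equation (4.6)]{And5n7} via the chain \eqref{finite1}--\eqref{finite3}, not by a simple reversal of summation. So the interconvertibility of (i) and (ii) is a correct observation, but it is itself a nontrivial result rather than a shortcut one gets for free.
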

The two representations of $\sigma_2(q)$ given above look quite similar with only a few differences here and there. Thus, one may wonder if we can derive one from the other. Indeed, in Section \ref{analytic}, we prove that
	\begin{align}\label{theoremfinite1}
	\sum_{j=0}^{n} \frac{(-1)_j(-1)^jq^{j-jn}}{(q)_j}=-\frac{(1+q^n)}{(1-q^n)} \sum_{m=0}^{n-1} \frac{(-1)_m(-1)^mq^{-mn}}{(q)_m} .
\end{align}
 In Section \ref{gen rascoe}, we will generalize most of the above results in conjunction with a generalization of $\sigma_2(q)$ as well as with the number of \emph{generalized Rascoe partitions}, that is, $a_\ell(n)$, and the number of \emph{generalized non-Rascoe partitions} $b_\ell(n)$, where $\ell\in\mathbb{N}\cup\{0\}$.  A generalized Rascoe partition of an integer $n$ is a partition into distinct parts such that the number of parts plus $\ell$ is also a part. Similarly, a generalized non-Rascoe partition is a partition into distinct parts such that the number of parts plus $\ell$ is \emph{not} a part. In the same section, the following surprising generalization of \eqref{theoremfinite1} will be proved.
 \begin{theorem}\label{theoremgfinite1}
 	For $n, \ell\in\mathbb{Z}$,
 	\begin{align}\label{general ell finite identity}
 		\sum_{j=0}^{n} \frac{(-1)_j(-1)^jq^{-j(n+\ell)}}{(q)_j}=(-1)^{\ell}\frac{(q^{n+1})_{\ell}}{(-q^{n+1})_{\ell}} \sum_{j=0}^{n+\ell} \frac{(-1)_j(-1)^jq^{-jn}}{(q)_j} .
 	\end{align}
 \end{theorem}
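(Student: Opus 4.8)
The plan is to prove \eqref{general ell finite identity} in the region $|q|>1$; since both sides are rational functions of $q$, this is enough. Put $p=1/q$, so $|p|<1$. From $(-1;q)_j/(q;q)_j=(-p)^j(-1;p)_j/(p;p)_j$ one checks that
\begin{align*}
\frac{(-1)_j(-1)^jq^{-jM}}{(q)_j}&=\frac{(-1;p)_j}{(p;p)_j}\,p^{(M+1)j},\\
(-q^{n+1};q)_\ell&=p^{-c}(-p^{n+1};p)_\ell,\\
(q^{n+1};q)_\ell&=(-1)^\ell p^{-c}(p^{n+1};p)_\ell,
\end{align*}
where $c=\ell(n+1)+\binom{\ell}{2}$. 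Writing $S(N,M):=\sum_{j=0}^{N}\frac{(-1;p)_j}{(p;p)_j}p^{(M+1)j}$ and $r:=(p^{n+1};p)_\ell/(-p^{n+1};p)_\ell$, the identity \eqref{general ell finite identity} is, after these substitutions, precisely $S(n,n+\ell)=r\,S(n+\ell,n)$. I will treat $n\geq0$, $\ell\geq0$ (so $n+\ell\geq0$); for $\ell<0$ the asserted identity for $(n,\ell)$ is, after clearing the Pochhammer factors using $(q^{n+\ell+1};q)_{-\ell}=1/(q^{n+1})_\ell$ and its analogue, algebraically the identity for $(n+\ell,-\ell)$, which is covered.

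Assume now $N,M\geq0$. The series $\Sigma(M):=\sum_{j\geq0}\frac{(-1;p)_j}{(p;p)_j}p^{(M+1)j}$ converges (as $|p^{M+1}|<1$) and by the $q$-binomial theorem equals $\frac{(-p^{M+1};p)_\infty}{(p^{M+1};p)_\infty}$, so $S(N,M)=\Sigma(M)-\mathcal T(N,M)$ with $\mathcal T(N,M):=\sum_{j\geq N+1}\frac{(-1;p)_j}{(p;p)_j}p^{(M+1)j}$. Cancellation of infinite products gives $\Sigma(n+\ell)/\Sigma(n)=(p^{n+1};p)_\ell/(-p^{n+1};p)_\ell=r$. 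Hence, once we also prove the tail identity $\mathcal T(n,n+\ell)=r\,\mathcal T(n+\ell,n)$, we obtain $S(n,n+\ell)=\Sigma(n+\ell)-\mathcal T(n,n+\ell)=r\big(\Sigma(n)-\mathcal T(n+\ell,n)\big)=r\,S(n+\ell,n)$, which is the theorem. So everything reduces to the tail identity.

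For the tail, shift the index by $N+1$ and factor out the leading term:
\begin{equation*}
\mathcal T(N,M)=\frac{(-1;p)_{N+1}}{(p;p)_{N+1}}\,p^{(M+1)(N+1)}\;{}_2\phi_1\!\big(-p^{N+1},\,p;\,p^{N+2};\,p,\,p^{M+1}\big).
\end{equation*}
Apply Heine's transformation ${}_2\phi_1(a,b;c;p,z)=\frac{(b;p)_\infty(az;p)_\infty}{(c;p)_\infty(z;p)_\infty}\,{}_2\phi_1(c/b,z;az;p,b)$ with $(a,b,c,z)=(-p^{N+1},p,p^{N+2},p^{M+1})$, and simplify the prefactor using $(p;p)_\infty=(p;p)_{N+1}(p^{N+2};p)_\infty$; this gives
\begin{equation*}
\mathcal T(N,M)=(-1;p)_{N+1}\,p^{(M+1)(N+1)}\,\frac{(-p^{N+M+2};p)_\infty}{(p^{M+1};p)_\infty}\;{}_2\phi_1\!\big(p^{N+1},\,p^{M+1};\,-p^{N+M+2};\,p,\,p\big).
\end{equation*}
The decisive feature is that this last ${}_2\phi_1$, at argument $z=p$, has numerator parameters $p^{N+1},p^{M+1}$ and denominator parameter $-p^{N+M+2}=-(p^{N+1})(p^{M+1})$, hence is \emph{symmetric} under $N\leftrightarrow M$. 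Taking $(N,M)=(n,n+\ell)$ and $(N,M)=(n+\ell,n)$ leaves both $p^{(M+1)(N+1)}$ and this ${}_2\phi_1$ unchanged, while $(-1;p)_{N+1}$ and $1/(p^{M+1};p)_\infty$ contribute exactly $\frac{(-1;p)_{n+1}}{(-1;p)_{n+\ell+1}}=\frac{1}{(-p^{n+1};p)_\ell}$ and $\frac{(p^{n+1};p)_\infty}{(p^{n+\ell+1};p)_\infty}=(p^{n+1};p)_\ell$; therefore $\mathcal T(n,n+\ell)=r\,\mathcal T(n+\ell,n)$, as required.

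I expect the main difficulty to be finding this route at all: the key moves are (i) passing to $|q|>1$, which turns the two finite sums in \eqref{general ell finite identity} into honest partial sums of one convergent $q$-binomial series, and (ii) recognising that after Heine's transformation the tail of that series is governed by a ${}_2\phi_1$ at argument $p$ that is manifestly symmetric in the exponents $n+1$ and $n+\ell+1$; everything else is bookkeeping of powers of $p$ in the base-$q$/base-$p$ dictionary. By contrast, a direct induction on $\ell$ appears to stall, since the partial sums $S(N,M)$ satisfy no first-order recurrence in their length $N$.
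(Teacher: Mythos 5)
Your proof is correct in its essential range and takes a genuinely different route from the paper's. The paper starts from Andrews' identity \cite[Equation (4.6)]{And5n7} at $a=b=-1$ (its \eqref{finite1}), applies it with $n$ replaced by $n+1$ and by $n+\ell+1$, and bridges the two occurrences with Lemma \ref{lfinitelemma}, which is proved by induction on $\ell$ — so, contrary to your closing remark, an induction on $\ell$ does succeed, but only after the problem has been repackaged through Andrews' identity. Your route — invert the base to $p=1/q$, recognize the two finite sums as the partial sums $S(n,n+\ell)$ and $S(n+\ell,n)$ of one convergent $q$-binomial series, note that the full sums $\Sigma(M)=(-p^{M+1};p)_\infty/(p^{M+1};p)_\infty$ already satisfy the asserted proportionality, and then prove the same proportionality for the tails by a single application of Heine's transformation, which exposes a ${}_2\phi_1$ at argument $p$ that is manifestly symmetric in $N$ and $M$ — is self-contained modulo the $q$-binomial theorem and Heine, dispenses with the auxiliary lemma, and gives a structural explanation of the $n\leftrightarrow n+\ell$ symmetry that the paper records after the theorem. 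I verified the bookkeeping: the dictionary $\frac{(-1)_j(-1)^jq^{-jM}}{(q)_j}=\frac{(-1;p)_j}{(p;p)_j}p^{(M+1)j}$, the exponent $c=\ell(n+1)+\binom{\ell}{2}$, the prefactor simplification via $(p;p)_\infty=(p;p)_{N+1}(p^{N+2};p)_\infty$, and the final ratio $\frac{(-1;p)_{n+1}}{(-1;p)_{n+\ell+1}}\cdot\frac{(p^{n+1};p)_\infty}{(p^{n+\ell+1};p)_\infty}=r$ are all correct; convergence is unproblematic since $|p|<1$ and all arguments are positive powers of $p$, and the passage back to $|q|<1$ by rational-function continuation is legitimate.

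The one point to tidy is the range of $(n,\ell)$: your reduction of $\ell<0$ to the pair $(n+\ell,-\ell)$ works exactly when $n+\ell\geq0$, so the degenerate cases $n<0$ and $\ell<-n$ are not literally covered by your argument. They are trivial — for $n<0$ both sides vanish (an empty sum on the left, and on the right either an empty sum or a vanishing factor $(q^{n+1})_\ell$), and for $\ell<-n$ the right-hand side degenerates and requires the same conventional reading the paper gives it — but since the theorem is asserted for all $n,\ell\in\mathbb{Z}$, you should add a sentence disposing of these cases, as the paper does.
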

 The reason why \eqref{general ell finite identity} is interesting is because, it is an identity between two finite analogues of Fine's function $F(a,b;t):=\sum_{n=0}^{\infty}(aq)_nt^n/(bq)_n$ where the number of terms summed on both sides are arbitrary. Also, it can be put in a completely symmetric form in $n$ and $n+\ell$, namely,
 \begin{align*}
 	(-1)^n\frac{(q)_n}{(-q)_n}\sum_{j=0}^{n} \frac{(-1)_j(-1)^jq^{-j(n+\ell)}}{(q)_j}=(-1)^{n+\ell}\frac{(q)_{n+\ell}}{(-q)_{n+\ell}} \sum_{j=0}^{n+\ell} \frac{(-1)_j(-1)^jq^{-jn}}{(q)_j} .
 \end{align*} 
In Section \ref{tenth}, we offer an identity linking the generating functions of $b_\ell(n)$ and $b_{\ell+1}(n)$ by making use of a mysterious identity of Ramanujan \cite{lnb}. A congruence involving $b_0(n)(=b(n)), b_1(n)$ and the coefficients of Ramanujan's tenth order mock theta functions $X(q)$ and $\chi(q)$ is given. Similarly, a congruence involving $b(n)$, $b_1(n)$ and certain  Fourier coefficients of the Hauptmoduln $j_5(\tau)$ is obtained using some recent results of Kang, Kim, Matsusaka and Yoo \cite{matsusaka}.

\section{Combinatorics of $\sigma_2(q)$ and non-Rascoe partitions}\label{comb}

In this section, the deep connection between $\sigma_2(q)$ and $b(n)$, the number of non-Rascoe partitions of $n$, given in Theorem \ref{theorem main} will be revealed. However, this requires proving several preparatory results, which are of independent interest.

We begin with obtaining an expression for the generating function of $R(m, n)$.
\begin{proof}[Theorem \textup{\ref{theoremrrp}}][]
	Let $n$ be the number of parts and $k$ be the largest part of a Rogers--Ramanujan partition. Since the difference between any two consecutive parts is at least $2$, we separate out the first $2j-1$ nodes from $j$-th part of the partition counted from the bottom, where $1\leq j\leq n$ as shown by the dotted region in Figure \ref{fig1}. This gives us $1+3+\cdots+(2n-1)=n^2$ nodes. Next, $1/(zq)_n$ generates the partition to the right of the dotted region with $z$ keeping track of $k-(2n-1)$. Thus, $\frac{z^{n-1}q^{n^2}}{(zq)_n}$ generates a Rogers--Ramanujan partition where the number of parts is $n$ and $z$ keeps track of the rank $k-n$.  Summing over $n$ from $0$ to $\infty$ leads us to the desired result.

\begin{figure}[hbt!]
	\begin{center}
		\includegraphics[width=0.40\textwidth]{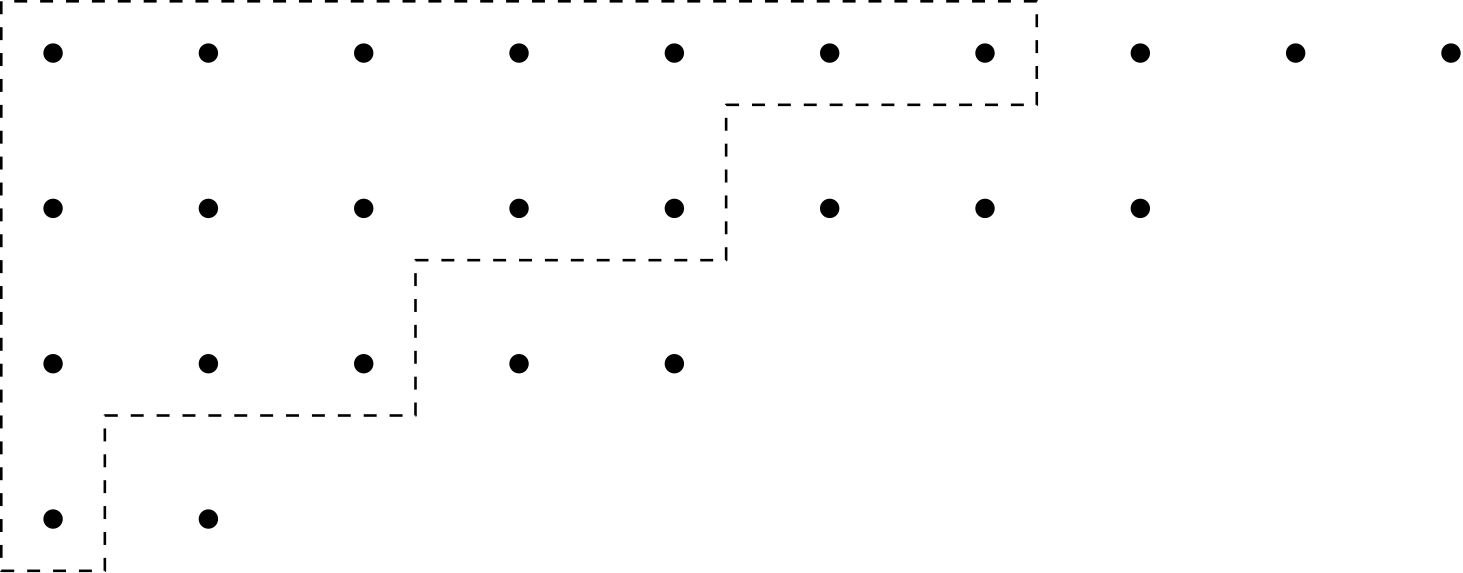}
		\caption{A Rogers--Ramanujan partition}
		\label{fig1}
	\end{center}
\end{figure}

\end{proof}

\begin{remark}
This result can also be derived using MacMahon's $\Omega$-operator method by starting with the representation
\begin{equation*}
\sum_{n=0}^{\infty} \sum_{m=-\infty}^{\infty} R(m,n) z^m q^n=1+\sum_{j=1}^{\infty}\sum_{n_1=1}^{\infty}\sum_{n_2=n_1+2}^{\infty}\cdots\sum_{n_{j}=n_{j-1}+2}^{\infty}z^{n_j-j}q^{n_1+n_2+\cdots+n_j}.
\end{equation*}
\end{remark}
Next, we connect $R(m, n)$ with another restricted partition function. 
	\begin{theorem}
	Let $L(m,N)$ be the number of partitions of a number $N$ into $m$ parts, with largest part, say $n$, repeating at least $n$ times. Then, $L(m,N)$ equals the Rogers--Ramanujan partitions of $N$ with rank $m-1$. In other words, $L(m,N)=R(m-1,N)$.
\end{theorem}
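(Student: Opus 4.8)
The plan is to compute the full two‑variable generating function of $L(m,N)$ directly and then match it against the formula for the generating function of $R(m,n)$ supplied by Theorem~\ref{theoremrrp}. First I would build $\sum_{m,N}L(m,N)z^mq^N$ combinatorially: fix the largest part $n\ge 1$ of a partition counted by $L(m,N)$. It must occur at least $n$ times, and this block of occurrences is generated by $\sum_{t\ge n}z^tq^{nt}=z^nq^{n^2}/(1-zq^n)$, where $z$ records the number of parts it contributes. The remaining parts are unrestricted subject to being at most $n-1$, and these are generated by $1/(zq)_{n-1}$, again with $z$ recording the number of parts. Multiplying and summing over $n$,
\begin{align*}
\sum_{m,N}L(m,N)z^mq^N=\sum_{n=1}^{\i}\frac{z^nq^{n^2}}{1-zq^n}\cdot\frac{1}{(zq)_{n-1}}=\sum_{n=1}^{\i}\frac{z^nq^{n^2}}{(zq)_n},
\end{align*}
using $(zq)_n=(1-zq^n)(zq)_{n-1}$.

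Next I would invoke Theorem~\ref{theoremrrp}: multiplying that identity by $z$ shifts the rank index and gives
\begin{align*}
z\sum_{n=0}^{\i}\sum_{m=-\i}^{\i}R(m,n)z^mq^n=\sum_{n=0}^{\i}\frac{z^{n}q^{n^2}}{(zq)_n}=1+\sum_{n=1}^{\i}\frac{z^nq^{n^2}}{(zq)_n},
\end{align*}
the last step isolating the $n=0$ term. Comparing the two displays, the generating functions of $L(m,N)$ and of $R(m-1,N)$ agree in every coefficient of $z^mq^N$ with $N\ge 1$ — the only discrepancy, at $N=0$, being the empty‑partition term, which is not counted by $L$ — and this is precisely $L(m,N)=R(m-1,N)$.

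None of these steps is genuinely difficult; the only points needing care are the bookkeeping of which variable tracks the number of parts in each factor and the harmless boundary term at $N=0$, so I would regard the write‑up as essentially routine once the generating function of $L$ is set up correctly. As an alternative (and as a sanity check), one can give a purely bijective proof that mirrors the staircase construction behind Figure~\ref{fig1}: starting from a partition counted by $L(m,N)$ with largest part $n$, delete exactly $n$ copies of $n$, conjugate the resulting partition (which has at most $n$ parts), pad it with zeros to exactly $n$ parts, and add the staircase $1,3,\dots,2n-1$; this produces a Rogers--Ramanujan partition of $N$ with $n$ parts and largest part $(m-n)+(2n-1)$, hence rank $(m-n)+n-1=m-1$, and the steps are manifestly reversible. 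I expect the generating‑function route to be the quicker one to carry out in full detail.
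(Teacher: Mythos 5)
Your proposal is correct and follows essentially the same route as the paper: compute the two-variable generating function of $L(m,N)$ combinatorially, obtain $\sum_{n}z^{n}q^{n^2}/(zq)_n$, and compare with Theorem \ref{theoremrrp} via the shift by $z$. The only cosmetic difference is how you slice the Ferrers diagram (the block of all copies of the largest part $n$ times $1/(zq)_{n-1}$, versus the paper's $n\times n$ square times $1/(zq)_n$), and your remark about the $N=0$ boundary term is harmless and consistent with the paper's conventions.
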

\begin{proof}
In view of Theorem \ref{theoremrrp}, it suffices to show that the generating function of $L(m,N)$ is
\begin{align*}
	\sum_{N=0}^{\infty} \sum_{m=0}^{\infty} L(m,N) z^m q^N = \sum_{n=0}^{\infty} \frac{z^nq^{n^2}}{(zq)_n}.
\end{align*}
\begin{figure}[hbt!]
	\begin{center}
		\includegraphics[width=0.13\textwidth]{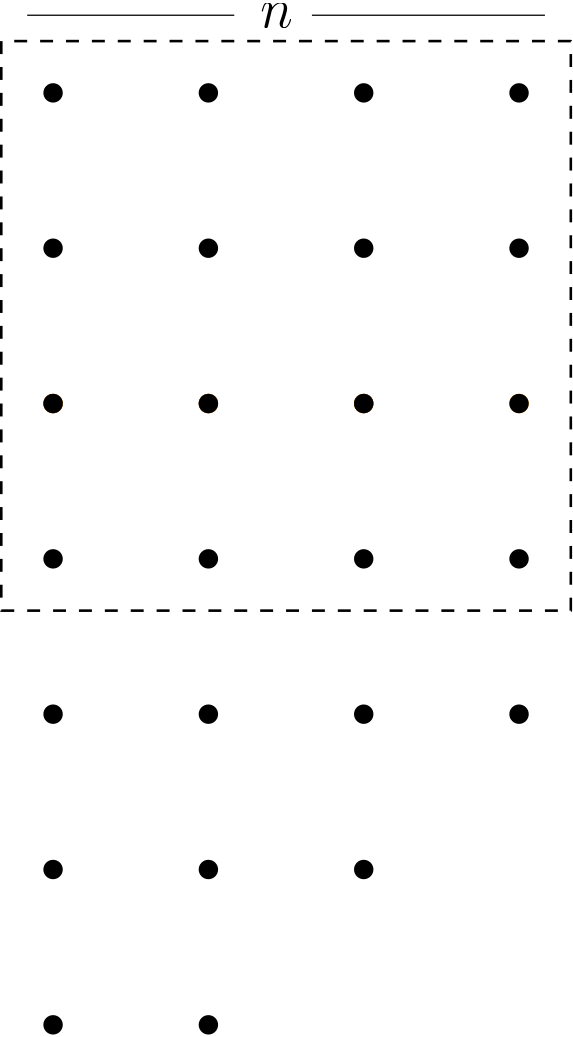}
		\caption{A partition generated by $L(m,N)$}
		\label{fig2}
	\end{center}
\end{figure}
From Figure \ref{fig2}, it is clear that if $n$ is the largest part of a partition enumerated by $L(m, N)$, then the dotted region is generated by $q^{n^2}$. Also, $1/(zq)_n$ generates the partition below the dotted region with $z$ keeping track of the difference between the total number of parts and $n$. Thus, $\frac{z^{n}q^{n^2}}{(zq)_n}$ generates a partition enumerated by $L(m,N)$ having $n$ as its largest part and $z$ keeping track of the number of parts. The desired generating function is then obtained by summing over $n$ from $0$ to $\infty$.
\end{proof}

\begin{remark}
It is easy to see that the partitions in Figures \ref{fig1} and \ref{fig2} can be bijectively mapped to each other.
\end{remark}
We now give representations for the generating functions of $a(N)$ and $b(N)$. Here, and throughout the sequel, we denote the Gaussian polynomial or the $q$-binomial coefficient by \cite[p.~35]{gea}
\begin{align*}
	\left[\begin{matrix} N\\n\end{matrix}\right]=\left[\begin{matrix} N\\n\end{matrix}\right]_q :=\begin{cases}
		\frac{(q;q)_N}{(q;q)_n (q;q)_{N-n}},\hspace{2mm}\text{if}\hspace{1mm}0\leq n\leq N,\\
		0,\hspace{2mm}\text{otherwise}.
	\end{cases} 
\end{align*}
\begin{theorem}\label{theoremrp-thm-1}
	\begin{align}
		\textup{(i)} \sum_{N=1}^{\infty} a(N) q^N &= \sum_{n=1}^{\infty} q^{n(n+1)/2} \sum_{m=1}^{n} \qbinom{n-1}{m-1}\frac{q^{m(m-1)}}{(q)_{m-1}},\label{theoremrp}\\
		\textup{(ii)} 	\sum_{N=1}^{\infty} b(N) q^N &= \sum_{n=1}^{\infty} q^{n(n+1)/2} \sum_{m=1}^{n} \qbinom{n-1}{m-1} \frac{q^{m^2}}{(q)_m}.\label{theoremnrp}
	\end{align}
\end{theorem}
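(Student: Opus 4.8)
The plan is to prove both (i) and (ii) by the same combinatorial dissection, organized around two statistics of a partition $\lambda$ into distinct parts: the number of parts, call it $n$, and the number $m$ of parts lying at or above a threshold depending on $n$. Fix $n\geq 1$ and suppose $\lambda$ has exactly $n$ parts. In case (ii) the defining property of a non-Rascoe partition is that $n$ is \emph{not} a part of $\lambda$, so every part is either $\leq n-1$ or $\geq n+1$; let $m$ be the number of parts that are $\geq n+1$, so that $0\leq m\leq n$ and the remaining $n-m$ parts form a set of distinct integers in $\{1,\dots,n-1\}$. In case (i) the defining property of a Rascoe partition is that $n$ \emph{is} a part; let $m$ be the number of parts that are $\geq n$ (so the part equal to $n$ is counted among these), which forces $m\geq 1$ and leaves $m-1$ parts $\geq n+1$ together with $n-m$ distinct parts in $\{1,\dots,n-1\}$.

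Next I would write down the generating function of each block, using two standard facts: the generating function of partitions into exactly $k$ distinct parts each at most $M$ is $q^{k(k+1)/2}\qbinom{M}{k}$ (subtract a staircase from the ordered parts and fit the remainder into a $k\times(M-k)$ box), and, on letting $M\to\infty$, the generating function of partitions into exactly $k$ distinct positive parts is $q^{k(k+1)/2}/(q)_k$. Hence the $n-m$ small parts contribute $q^{(n-m)(n-m+1)/2}\qbinom{n-1}{n-m}$; in case (ii), subtracting $n$ from each of the $m$ parts that are $\geq n+1$ shows they contribute $q^{mn}\,q^{m(m+1)/2}/(q)_m$; and in case (i), the distinguished part equal to $n$ contributes $q^{n}$ while the remaining $m-1$ parts (each $\geq n+1$) contribute $q^{(m-1)n}\,q^{m(m-1)/2}/(q)_{m-1}$.

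To finish, I would multiply the block generating functions together, apply the symmetry $\qbinom{n-1}{n-m}=\qbinom{n-1}{m-1}$, and simplify the exponent of $q$ using the elementary identities
\begin{align*}
\binom{n-m+1}{2}+mn+\binom{m+1}{2}&=\frac{n(n+1)}{2}+m^{2}\qquad\text{(for (ii))},\\
\binom{n-m+1}{2}+mn+\binom{m}{2}&=\frac{n(n+1)}{2}+m(m-1)\qquad\text{(for (i))}.
\end{align*}
Summing over $n\geq 1$ and $1\leq m\leq n$ then yields exactly \eqref{theoremnrp} and \eqref{theoremrp}; in case (ii) one may harmlessly include $m=0$ because $\qbinom{n-1}{n-m}=\qbinom{n-1}{n}=0$ there, which simply records that $n$ distinct parts cannot all be $\leq n-1$. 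Equivalently, the whole argument can be drawn by slicing the Ferrers diagram of $\lambda$ along the column of height $n$ (case (i)) or $n+1$ (case (ii)), in the spirit of Figures \ref{fig1} and \ref{fig2}; and \eqref{theoremrp} is then seen to be a repackaging of Rascoe's stated formula via $\qbinom{n-1}{m-1}=(q)_{n-1}/\big((q)_{m-1}(q)_{n-m}\big)$.

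I expect the only genuine obstacle here to be bookkeeping rather than any analytic difficulty: one must pick the threshold correctly ($n$ for Rascoe, $n+1$ for non-Rascoe — which is precisely what turns $q^{m(m-1)}/(q)_{m-1}$ into $q^{m^{2}}/(q)_m$ between the two sums), treat the boundary cases where a block of parts is empty, and carry out the exponent simplification without slips. A sanity check against small cases — for instance, that the nine non-Rascoe partitions of $11$ distribute as one with $n=1$, four with $n=2$, three with $n=3$, and one with $n=4$ — confirms that this is the right dissection.
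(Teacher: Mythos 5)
Your proposal is correct and is essentially the paper's own argument: both proofs classify a (non-)Rascoe partition by the number of parts $n$ and the number $m$ of large parts, generate the bounded block of distinct small parts by a staircase times the Gaussian polynomial $\qbinom{n-1}{m-1}$ and the unbounded block by $1/(q)_m$ (resp. $1/(q)_{m-1}$), and sum over $n$ and $m$. The only differences are cosmetic: the paper slices the Ferrers diagram into four regions (one global staircase plus a rectangle), whereas you subtract staircases within each block and verify the exponent identity algebraically, and in case (i) you count parts $\geq n$ rather than $>n$, which is exactly the paper's final index shift $m\mapsto m-1$.
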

\begin{proof}
In proving both (i) and (ii),we assume $n$ to be the number of parts in a partition, and $m$ to be the number of parts greater than $n$. 

We begin by proving (i). Clearly, $0\leq m\leq n-1$. The Ferrers diagram of a typical Rascoe partition is given in Figure \ref{figure3}, where the number of parts, in this case, $6$, is also a part. We divide the Ferrers diagram into four different regions - $A, B, C$ and $D$. 
		\begin{figure}[hbt!]
		\begin{center}
			\includegraphics[width=0.50\textwidth]{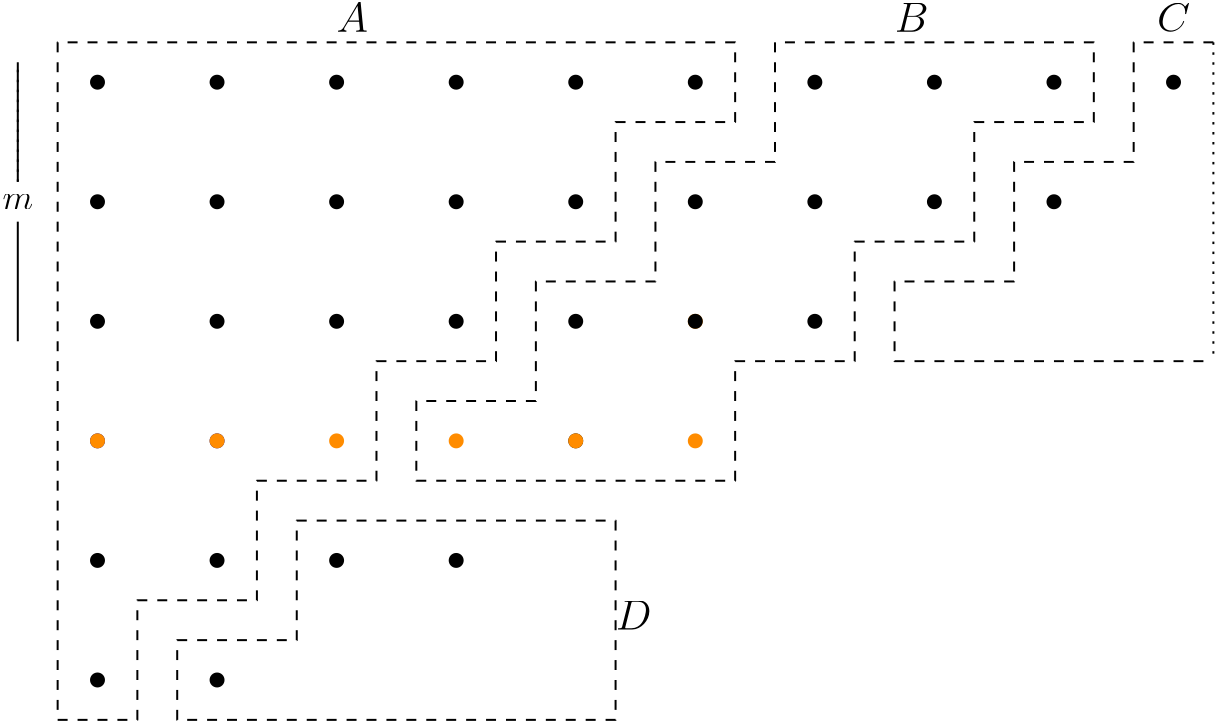}
			\caption{A Rascoe partition}\label{figure3}
		\end{center}
	\end{figure}
	Region $A$ is formed by taking $k$ nodes from the $k^{\textup{th}}$ part of the partition (counted from below), for $1\leq k\leq n$, which is guaranteed by the fact that a Rascoe partition has distinct parts. Note that the sub-partition forming the region $B$ has $m+1$ parts, each equal to $m$. This is because, region $B$ is formed out of those parts in the partition which are $\geq n$ and distinct. The sub-partition in region $C$ is generated by $1/(q)_m$ as there can be at most $m$ number of parts. Finally, Region $D$ has at most $(n-m-1)$ parts, each $\le m$, and is, therefore, generated by $\qbinom{n-1}{m}$ \cite[p.~33, Equation (3.2.1)]{gea}. Thus, a Rascoe partition with $n$ number of parts is generated by
	\begin{align*}
		q^{n(n+1)/2} \sum_{m=0}^{n-1} \qbinom{n-1}{m} \frac{q^{m(m+1)}}{(q)_m}.
	\end{align*}
	Therefore,
		\begin{align*}
		\sum_{N=1}^{\infty} a(N) q^N &= \sum_{n=1}^{\infty} q^{n(n+1)/2} \sum_{m=0}^{n-1} \qbinom{n-1}{m} \frac{q^{m(m+1)}}{(q)_m}. 
		\end{align*}
	Replacing $m$ by $m-1$ in the above equation establishes \eqref{theoremrp}.
	
	To prove (ii), consider a typical non-Rascoe partition shown in Figure \ref{figure 4}, where the number of parts,  that is, $n=6$ in this case, is not a part. Clearly, $1\leq m\leq n$.
		\begin{figure}[hbt!]
		\begin{center}
			\includegraphics[width=0.50\textwidth]{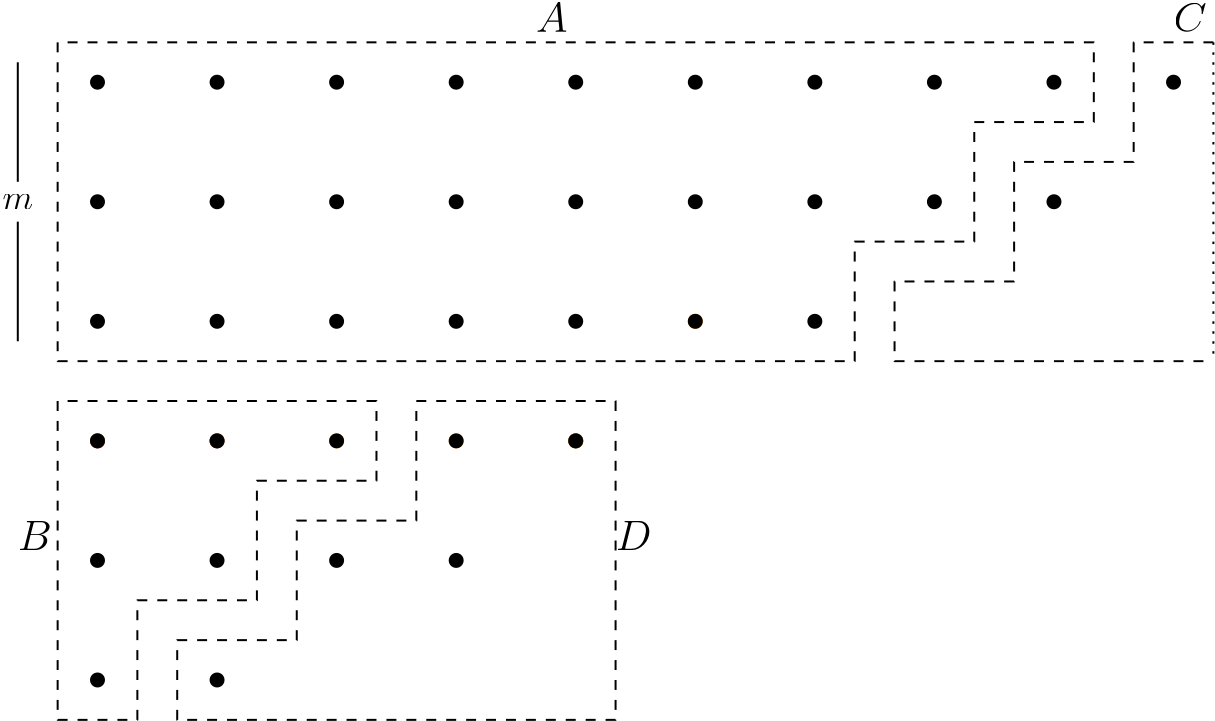}
			\caption{A non-Rascoe partition}\label{figure 4}
		\end{center}
	\end{figure}
	We again divide the Ferrers diagram into four different regions - $A, B, C$ and $D$. Since the region $A$ is formed from parts $>n$ which are also distinct, it contains exactly the parts $n+1, \cdots, n+m$. Therefore, it contributes $q^{nm+m(m+1)/2}$ towards the generating function. Since the region $B$ is formed by taking $k$ nodes from $k^{\textup{th}}$ part of the partition, where $1\leq k\leq n-m$, it will contribute $q^{(n-m)(n-m+1)/2}$. Region $C$ is generated by $1/(q)_m$ as there can be at most $m$ number of parts. Finally, region $D$ is a sub-partition consisting of at most $(n-m)$ parts, each $\le (m-1)$, and is thus generated by $\qbinom{n-1}{m-1}$.
	 
	 Therefore, the non-Rascoe partitions with $n$ number of parts are generated by
	 \begin{align*}
	 	\sum_{m=1}^{n} \qbinom{n-1}{m-1} \frac{q^{nm+m(m+1)/2}q^{(n-m)(n-m+1)/2}}{(q)_m}.
	 \end{align*}
Hence,
	 \begin{align*}
	 	\sum_{N=1}^{\infty} b(N) q^N = \sum_{n=1}^{\infty} q^{n(n+1)/2} \sum_{m=1}^{n} \qbinom{n-1}{m-1} \frac{q^{m^2}}{(q)_m}.
	 \end{align*}
\end{proof}
Next, we make use of the principle of conjugation to obtain an identity which allows us to derive a new representation of $\sigma_2(q)$. 
\begin{theorem}\label{theorem0}
	\begin{align}\label{theoremsmall1}
		\sum_{n=0}^{\infty} \frac{z^nq^{n^2}}{(-q)_n} = \frac{1}{(-q)_{\infty}} \sum_{n=0}^{\infty} q^{n(n+1)/2} \sum_{i=0}^{n} \frac{z^iq^{i(i-1)/2}}{(q)_{n-i}}.
	\end{align}
	\begin{align}\label{theoremsmall2}
		\sum_{n=0}^{\infty} \frac{z^nq^{n^2}}{(-q)_n} = \frac{(-z^{-1}q)_{\infty}}{(-q)_{\infty}} \sum_{n=0}^{\infty} (-z)_nq^{n(n+1)/2} - \frac{1}{(-q)_{\infty}} \sum_{n=0}^{\infty} q^{n(n+1)/2} \sum_{i=1}^{\infty} \frac{z^{-i}q^{i(i+1)/2}}{(q)_{n+i}}.
	\end{align}
\end{theorem}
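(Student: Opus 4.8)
The plan is to obtain both identities from a single reduction. Since $1/(-q)_n=(-q^{n+1})_\infty/(-q)_\infty$, the left-hand side equals $(-q)_\infty^{-1}\sum_{n\ge0}z^nq^{n^2}(-q^{n+1})_\infty$, and the factor $q^{n^2}(-q^{n+1})_\infty$ is the generating function (with $z$ recording $n$) for partitions whose Ferrers diagram is an $n\times n$ square surmounted by a partition into distinct parts all greater than $n$. The engine for both parts is conjugation of these Ferrers diagrams, which in generating-function terms is realized by inserting Euler's expansion $(-q^{n+1})_\infty=\sum_{k\ge0}q^{nk+k(k+1)/2}/(q)_k$ and re-summing.

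For \eqref{theoremsmall1}: substitute Euler's expansion, interchange the two summations, and put $N=n+k$. The elementary exponent identity $n^2+nk+\binom{k+1}{2}=\binom{N+1}{2}+\binom{n}{2}$ (for $k=N-n$) turns the double sum into $(-q)_\infty^{-1}\sum_{N\ge0}q^{N(N+1)/2}\sum_{n=0}^{N}z^nq^{n(n-1)/2}/(q)_{N-n}$, which is the right-hand side with summation index $i=n$ (the side of the square) and $N$ the number of parts of the original partition. That reindexing is precisely the arithmetic shadow of conjugating the square-plus-distinct-parts diagrams.

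For \eqref{theoremsmall2}: begin from \eqref{theoremsmall1} written as $(-q)_\infty\sum_n z^nq^{n^2}/(-q)_n=\sum_N q^{N(N+1)/2}\sum_{i=0}^{N}z^iq^{i(i-1)/2}/(q)_{N-i}$, and complete the inner sum so that $i$ ranges over all integers $\le N$ (extending the range upward changes nothing, since $1/(q)_{N-i}=0$ as soon as $i>N$). Because $\binom{-i}{2}=\binom{i+1}{2}$, the newly included terms, those with $i<0$, reassemble exactly into $\sum_N q^{N(N+1)/2}\sum_{i\ge1}z^{-i}q^{i(i+1)/2}/(q)_{N+i}$, i.e.\ the correction term of \eqref{theoremsmall2}. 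It remains to evaluate the completed sum. Reversing the inner summation ($j=N-i$) and applying Euler's expansion once more with $x=z^{-1}q^{-N}$ gives $\sum_{i\le N}z^iq^{i(i-1)/2}/(q)_{N-i}=z^Nq^{N(N-1)/2}(-z^{-1}q^{1-N})_\infty$; splitting the product $(-z^{-1}q^{1-N})_\infty=\prod_{j\ge 1-N}(1+z^{-1}q^j)$ at $j=0$ writes it as $z^{-N}q^{-N(N-1)/2}(-z)_N\cdot(-z^{-1}q)_\infty$, the two monomial prefactors cancel, and one is left with the clean evaluation $\sum_{i\le N}z^iq^{i(i-1)/2}/(q)_{N-i}=(-z)_N(-z^{-1}q)_\infty$. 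Hence the completed sum equals $(-z^{-1}q)_\infty\sum_N(-z)_Nq^{N(N+1)/2}$. Equating, transposing the correction term, and dividing by $(-q)_\infty$ yields \eqref{theoremsmall2}.

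The purely mechanical ingredients are the two uses of Euler's identity together with the exponent identities $n^2+nk+\binom{k+1}{2}=\binom{N+1}{2}+\binom{n}{2}$, $\binom{N-j}{2}=\binom{N}{2}-Nj+\binom{j+1}{2}$, and $\binom{-i}{2}=\binom{i+1}{2}$. The step that needs genuine care is the completion-of-the-sum argument for \eqref{theoremsmall2}: one must check that only the terms with $i<0$ are added and that they aggregate to precisely the stated correction term, and then keep track of the negative powers of $q$ and $z$ accurately enough that the product-split of $(-z^{-1}q^{1-N})_\infty$ cancels them exactly against $z^Nq^{N(N-1)/2}$ and returns $(-z)_N$. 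That bookkeeping, rather than any conceptual difficulty, is where the argument is most likely to go wrong.
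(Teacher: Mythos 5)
Your proposal is correct and follows essentially the same route as the paper: part \eqref{theoremsmall2} is obtained exactly as in the paper (complete the truncated Euler series, identify the tail as the correction term, and simplify $(-z^{-1}q^{1-N})_\infty$ into $(-z)_N(-z^{-1}q)_\infty$), while for \eqref{theoremsmall1} you carry out the Euler-expansion/reindexing computation whose combinatorial counterpart is the paper's Ferrers-diagram conjugation, as you yourself note. The bookkeeping (the exponent identities, the identification of the added $i<0$ terms, and the use of $1/(q)_{-n}=0$) all checks out.
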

	\begin{proof}
	Let $P(i,N)$ denote the number of partitions of $N$ into distinct parts, except the smallest part, say $i$, which repeats exactly $i$ number of times. A partition enumerated by $P(i,N)$ is given in Figure \ref{figure 5}.
	\begin{figure}[hbt!]
		\begin{center}
			\includegraphics[width=0.30\textwidth]{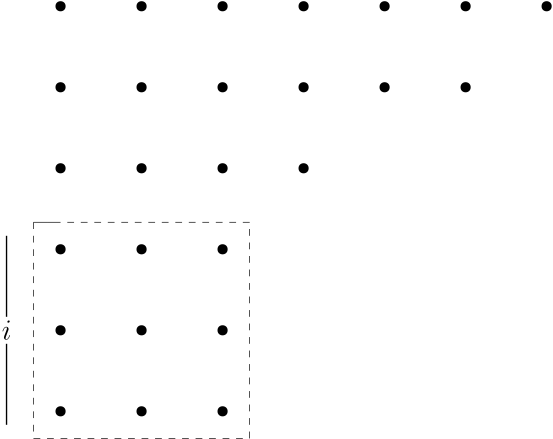}
			\caption{A partition enumerated by $P(i,N)$}\label{figure 5}
		\end{center}
	\end{figure}

	Then $q^{i^2}$ generates the dotted region and $(-q^{i+1})_{\infty}$ will generate the portion above it.
With $z$ keeping track of the smallest part, the generating function of $P(i,N)$ thus turns out to be
	\begin{align}\label{GenP}
		\sum_{N=0}^{\infty} \sum_{i=0}^{N} P(i,N) z^i q^N = \sum_{i=0}^{\infty} z^i q^{i^2} (-q^{i+1})_{\infty}.
	\end{align}
	We now take the conjugate of the above partition; see Figure \ref{figure 6}.
	\begin{figure}[hbt!]
		\begin{center}
			\includegraphics[width=0.25\textwidth]{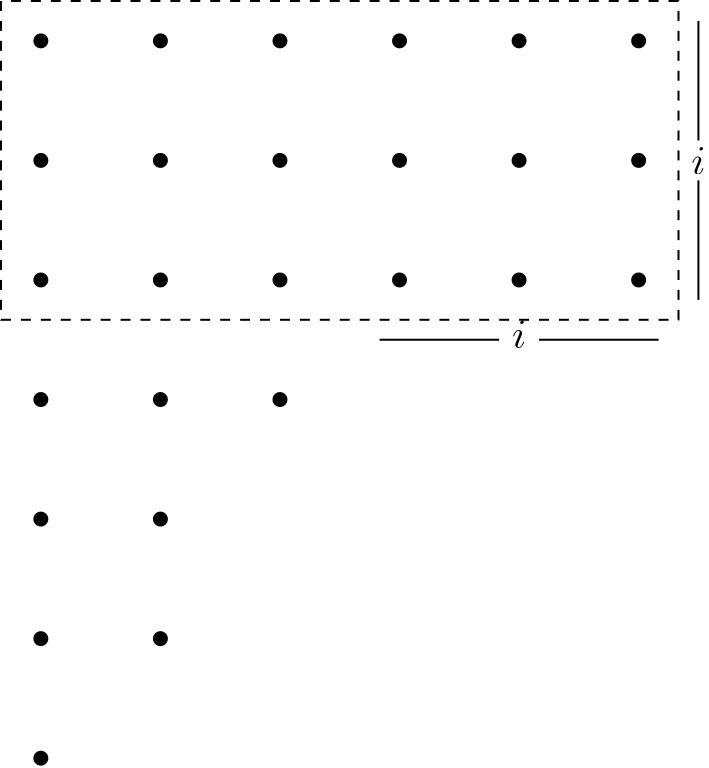}
			\caption{Conjugate of the partition in Figure \ref{figure 5}}\label{figure 6}
		\end{center}
	\end{figure}
In this new partition, the largest part, say $n$, repeats $i$ number of times. No number $>(n-i)$ and $<n$ can be a part since the smallest part $i$ in the original partition appears exactly $i$ number of times. The portion below the dotted region must contain all parts $\le (n-i)$. 
	
	Thus, the dotted region contributes $q^{ni}$ towards the generating function, and $\frac{q^{(n-i)(n-i+1)/2}}{(q)_{n-i}}$  generates the portion below it. Therefore, the corresponding generating function, where $z$ keeps track of the number of times the largest part repeats, is given by
	\begin{align*}
		\sum_{n=0}^{\infty} \sum_{i=0}^{n} \frac{z^iq^{ni}q^{(n-i)(n-i+1)/2}}{(q)_{n-i}} = \sum_{n=0}^{\infty} q^{n(n+1)/2} \sum_{i=0}^{n} \frac{z^iq^{i(i-1)/2}}{(q)_{n-i}}.
	\end{align*}
	Using the principle of conjugation, \eqref{GenP} and the above equation gives (\ref{theoremsmall1}).
	
	To derive (\ref{theoremsmall2}), replace $i$ by $n-i$ on right-hand side of (\ref{theoremsmall1}) and then use Euler's theorem \cite[p.~9, Corollary (1.3.2)]{ntsr}
	\begin{equation*} \sum_{k=0}^{\infty}\frac{(-w)^kq^{k(k-1)/2}}{(q)_k}=(w)_{\infty}\hspace{8mm}(|w|<\infty)
	\end{equation*}
	to get
	\begin{align*}
		\sum_{n=0}^{\infty} \frac{z^nq^{n^2}}{(-q)_n}&= \frac{1}{(-q)_{\infty}} \sum_{n=0}^{\infty} q^{n(n+1)/2} \sum_{i=0}^{n} \frac{z^{n-i}q^{(n-i)(n-i-1)/2}}{(q)_{i}}\\ &= \frac{1}{(-q)_{\infty}} \sum_{n=0}^{\infty} z^nq^{n^2} \sum_{i=0}^{n} \frac{(z^{-1}q^{1-n})^iq^{i(i-1)/2}}{(q)_{i}}\\ &= \frac{1}{(-q)_{\infty}} \sum_{n=0}^{\infty} z^nq^{n^2} \bigg\{ (-z^{-1}q^{1-n})_{\infty}-\sum_{i=n+1}^{\infty} \frac{(z^{-1}q^{1-n})^iq^{i(i-1)/2}}{(q)_{i}} \bigg\}\\ &= \frac{(-z^{-1}q)_{\infty}}{(-q)_{\infty}} \sum_{n=0}^{\infty} z^nq^{n^2} (-z^{-1}q^{1-n})_n - \frac{1}{(-q)_{\infty}} \sum_{n=0}^{\infty} z^nq^{n^2} \sum_{i=n+1}^{\infty} \frac{(z^{-1}q^{1-n})^iq^{i(i-1)/2}}{(q)_{i}}.
	\end{align*}
	Now use the fact $(-z^{-1}q^{1-n})_n=(z^{-1}q)^nq^{-n(n+1)/2}(-z)_n$ in the first sum and replace $i$ by $i+n$ in the second sum, and then simplify to obtain (\ref{theoremsmall2}).
\end{proof}
Two new representations of $\sigma_2(q)$ are now obtained.
	\begin{corollary}
	\begin{align}\label{theoremsigma3}
		\sigma_2(q) = \frac{1}{(-q)_{\infty}} \sum_{n=0}^{\infty} q^{n(n+1)/2} \sum_{i=0}^{n} \frac{(-1)^iq^{i(i-1)/2}}{(q)_{n-i}}.
	\end{align}
	\begin{align*}
		\sigma_2(q) = \frac{(q)_{\infty}}{(-q)_{\infty}} - \frac{1}{(-q)_{\infty}} \sum_{n=0}^{\infty} q^{n(n+1)/2} \sum_{i=1}^{\infty} \frac{(-1)^iq^{i(i+1)/2}}{(q)_{n+i}}.
	\end{align*}
\end{corollary}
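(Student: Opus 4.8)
The plan is to deduce both identities by simply specializing $z=-1$ in Theorem \ref{theorem0}, since $\sigma_2(q)=\sum_{n=0}^{\infty}(-1)^nq^{n^2}/(-q)_n$ is exactly the left-hand side of \eqref{theoremsmall1} and \eqref{theoremsmall2} at $z=-1$. First I would substitute $z=-1$ into \eqref{theoremsmall1}; the left side becomes $\sigma_2(q)$ verbatim and the inner sum picks up $(-1)^i$ in place of $z^i$, which yields \eqref{theoremsigma3} with no further work.

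For the second representation I would substitute $z=-1$ into \eqref{theoremsmall2} and then simplify the three ingredients that depend on $z$. The factor $(-z^{-1}q)_{\infty}$ becomes $(q)_{\infty}$, the factor $z^{-i}=(-1)^{-i}=(-1)^i$ adjusts the sign in the last double sum, and—this is the one point worth spelling out—the factor $(-z)_n$ becomes $(1)_n=(1;q)_n$, which is $1$ for $n=0$ and $0$ for every $n\geq 1$. Hence the series $\sum_{n=0}^{\infty}(-z)_nq^{n(n+1)/2}$ collapses to its single surviving term, namely $1$, and the first term of \eqref{theoremsmall2} reduces to $(q)_{\infty}/(-q)_{\infty}$. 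Collecting these gives the stated second identity.

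There is essentially no obstacle here: the content is in Theorem \ref{theorem0}, and the Corollary is a direct evaluation at $z=-1$. The only place requiring a moment's care is the vanishing of $(1;q)_n$ for $n\geq 1$, which is what turns an infinite sum into the clean factor $(q)_{\infty}/(-q)_{\infty}$; I would state this explicitly so the reader sees why the first term of \eqref{theoremsmall2} simplifies so drastically. No additional identities beyond those already established in the excerpt are needed.
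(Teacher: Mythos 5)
Your proposal is correct and is exactly the paper's proof: the corollary is obtained by setting $z=-1$ in both identities of Theorem \ref{theorem0}, with the only small point being the collapse of $\sum_{n\ge0}(1;q)_nq^{n(n+1)/2}$ to $1$ since $(1;q)_n=0$ for $n\ge1$, which you correctly spell out.
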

\begin{proof}
	Let $z=-1$ in (\ref{theoremsmall1}) and (\ref{theoremsmall2}).
\end{proof}
As one of the final steps towards proving Theorem \ref{theorem main}, we derive a finite analogue of an equivalent form of an identity of Bhargava and Adiga. This equivalent form is due to Srivastava \cite[Equation (6)]{srivastava}.
	\begin{theorem}\label{finite srivastava}
	\begin{align}\label{finite4}
		\sum_{m=0}^{n} \qbinom{n}{m} \frac{(-\lambda/a)_ma^mq^{m(m+1)/2}}{(-bq)_m} = \frac{(-aq)_n}{(-bq)_n} \sum_{m=0}^{n} \qbinom{n}{m} \frac{(-\lambda/b)_mb^mq^{m(m+1)/2}}{(-aq)_m}.
	\end{align}
\end{theorem}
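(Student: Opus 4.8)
The plan is to clear the $m$-dependent denominator on the left of \eqref{finite4} and then, by repeated use of the finite $q$-binomial theorem \cite{gea}
\begin{equation*}
	\prod_{i=0}^{N-1}(1+xq^i)=\sum_{k=0}^{N}\qbinom Nk q^{k(k-1)/2}x^k ,
\end{equation*}
reduce the identity to an inner sum that is symmetric in $a$ and $b$ by inspection. Concretely, after multiplying both sides of \eqref{finite4} by $(-bq)_n$, the claim becomes the cleared form
\begin{equation*}
	(-bq)_n\sum_{m=0}^{n}\qbinom nm\frac{(-\lambda/a)_ma^mq^{m(m+1)/2}}{(-bq)_m}=(-aq)_n\sum_{m=0}^{n}\qbinom nm\frac{(-\lambda/b)_mb^mq^{m(m+1)/2}}{(-aq)_m}.
\end{equation*}

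On the left I would use $1/(-bq)_m=(-bq^{m+1})_{n-m}/(-bq)_n$ and expand $(-bq^{m+1})_{n-m}=\sum_{k=0}^{n-m}\qbinom{n-m}{k}b^kq^{k(m+1)+k(k-1)/2}$ by the $q$-binomial theorem. Interchanging the two summations and using $\qbinom nm\qbinom{n-m}{k}=\qbinom{n}{m+k}\qbinom{m+k}{k}$, I would reindex by $s=m+k$; the key computation is that the exponent collapses,
\begin{equation*}
	\frac{m(m+1)}{2}+k(m+1)+\frac{k(k-1)}{2}=\frac{s(s+1)}{2},
\end{equation*}
independently of $m$. The left-hand side then becomes
\begin{equation*}
	\sum_{s=0}^{n}\qbinom ns q^{s(s+1)/2}\,I_s(a,b),\qquad I_s(a,b):=\sum_{m=0}^{s}\qbinom sm(-\lambda/a)_ma^mb^{s-m},
\end{equation*}
and the identical steps applied to the right-hand side yield $\sum_{s=0}^{n}\qbinom ns q^{s(s+1)/2}I_s(b,a)$. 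Hence it suffices to prove $I_s(a,b)=I_s(b,a)$ for every $s\ge 0$.

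For the symmetry of $I_s$, I would expand $(-\lambda/a)_ma^m=\prod_{j=0}^{m-1}(a+\lambda q^j)=\sum_{r=0}^{m}\qbinom mr q^{r(r-1)/2}\lambda^ra^{m-r}$ by the same theorem, substitute it into $I_s(a,b)$, interchange summations, and apply $\qbinom sm\qbinom mr=\qbinom sr\qbinom{s-r}{m-r}$ to obtain
\begin{equation*}
	I_s(a,b)=\sum_{r=0}^{s}\qbinom sr\lambda^rq^{r(r-1)/2}\sum_{p=0}^{s-r}\qbinom{s-r}{p}a^pb^{s-r-p}.
\end{equation*}
The inner sum is invariant under $a\leftrightarrow b$ (replace $p$ by $s-r-p$ and use $\qbinom{s-r}{s-r-p}=\qbinom{s-r}{p}$), so $I_s(a,b)=I_s(b,a)$, and the proof is complete.

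I expect the only mildly delicate point to be the exponent bookkeeping in the collapse to $q^{s(s+1)/2}$ and the passage from the double index $(m,k)$ to $(s,m)$; everything else is routine use of the $q$-binomial theorem together with the standard identity $\qbinom nm\qbinom{m}{r}=\qbinom nr\qbinom{n-r}{m-r}$. An alternative would be induction on $n$ via a contiguous relation for the left-hand sum, but that requires guessing the recurrence, whereas the symmetrization above is self-contained and also explains \emph{why} the result should hold, since the symmetry in $a$ and $b$ is visible already at the level of each $I_s$.
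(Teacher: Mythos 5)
Your proof is correct, and it proceeds by a genuinely different route from the paper. The paper obtains \eqref{finite4} as a specialization of Andrews' finite Heine transformation: replace $\tau$ by $\tau/\beta$, let $\beta\to\infty$, and then set $\alpha=-\lambda/a$, $\gamma=-bq$, $\tau=-aq$; this is short provided one has the finite Heine transformation at hand, and it places the identity squarely in the Heine family (which is in keeping with the paper's subsequent use of Heine-type tools). Your argument instead clears the denominators via $(-bq)_n/(-bq)_m=(-bq^{m+1})_{n-m}$, expands by Gauss's finite $q$-binomial theorem, and exploits the exponent collapse $\tfrac{m(m+1)}{2}+k(m+1)+\tfrac{k(k-1)}{2}=\tfrac{s(s+1)}{2}$ together with $\qbinom nm\qbinom{n-m}{k}=\qbinom ns\qbinom sm$ (with $s=m+k$) to bring both sides to the common form $\sum_{s=0}^{n}\qbinom ns q^{s(s+1)/2}I_s$, where $I_s(a,b)=\sum_{m=0}^{s}\qbinom sm(-\lambda/a)_ma^mb^{s-m}$; the final expansion of $(-\lambda/a)_ma^m=\prod_{j=0}^{m-1}(a+\lambda q^j)$ and the trinomial revision $\qbinom sm\qbinom mr=\qbinom sr\qbinom{s-r}{m-r}$ exhibit $I_s$ as a $\lambda$-weighted sum of homogeneous Rogers--Szeg\H{o}-type polynomials $\sum_{p}\qbinom{s-r}{p}a^pb^{s-r-p}$, whose $a\leftrightarrow b$ symmetry is immediate from $\qbinom{N}{p}=\qbinom{N}{N-p}$. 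All the index bookkeeping checks out, and since every quantity becomes a polynomial identity in $a$, $b$, $\lambda$ after the expansion, there are no convergence or vanishing-denominator caveats. What each approach buys: the paper's proof is a two-line corollary of a known transformation, while yours is elementary and self-contained, requires nothing beyond the finite $q$-binomial theorem, and makes the $a\leftrightarrow b$ symmetry of \eqref{finite4} structurally transparent rather than an artifact of a specialization.
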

\begin{proof}
	Andrews' finite Heine transformation \cite[Corollary 3, Equation (2.7)]{AndFinite} is given by
	\begin{align*}
		\sum_{m=0}^{\infty} \frac{(q^{-n})_m(\alpha)_m(\beta)_m}{(q)_m(\gamma)_m(q^{1-n}/\tau)_m} q^m = \frac{(\gamma/\beta)_n(\beta\tau)_n}{(\gamma)_n(\tau)_n} \sum_{m=0}^{\infty} \frac{(q^{-n})_m(\alpha\beta\tau/\gamma)_m(\beta)_m}{(q)_m(\beta\tau)_m(\beta q^{1-n}/\gamma)_m} q^m.
	\end{align*}
	Now replace $\tau$ by $\tau/\beta$, let $\beta\to\infty$ and then use the facts
	\begin{align*}
		\lim_{\beta\to\infty} \frac{(\beta)_m}{(\beta q^{1-n}/\tau)_m} = {\tau}^mq^{mn-m},\hspace{5mm}	(q^{-n})_m = (-1)^mq^{-mn+m(m-1)/2} \frac{(q)_n}{(q)_{n-m}},
	\end{align*}
	to obtain
	\begin{align*}
		\sum_{m=0}^{n} \qbinom{n}{m} \frac{(\alpha)_m}{(\gamma)_m} (-1)^m{\tau}^mq^{m(m-1)/2} = \frac{(\tau)_n}{(\gamma)_n} \sum_{m=0}^{n} \qbinom{n}{m} \frac{(\alpha\tau/\gamma)_m}{(\tau)_m} (-1)^m{\gamma}^mq^{m(m-1)/2}.
	\end{align*}
	Now, let $\alpha=-\lambda/a,\gamma=-bq$ and $\tau=-aq$ so as to arrive at \eqref{finite4}.
\end{proof}
\begin{remark}
	As indicated before the statement of Theorem \ref{finite srivastava}, letting $a=-x/q, b=-y/q$, replacing $\lambda$ by $\lambda/q$, and then letting $n\to\infty$ yields Srivastava's equivalent form of a finite identity due to Bhargava and Adiga.
\end{remark}
\begin{corollary}
	\begin{align}\label{finite41}
		\sum_{m=0}^{n} \qbinom{n}{m} \frac{{\lambda}^mq^{m^2}}{(-bq)_m} = \frac{1}{(-bq)_n} \sum_{m=0}^{n} \qbinom{n}{m} (-\lambda/b)_mb^mq^{m(m+1)/2}.
	\end{align}
\end{corollary}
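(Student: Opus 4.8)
The plan is to obtain the corollary \eqref{finite41} as the limiting case $a\to 0$ of Theorem \ref{finite srivastava}. Both sides of \eqref{finite4} are finite sums over $0\le m\le n$, so it suffices to compute the limit of each summand separately and then add up; no issue of interchanging limit and summation arises.

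First I would rewrite the $m$-th summand on the left of \eqref{finite4}. Since
\[
  (-\lambda/a)_m\,a^m=\prod_{j=0}^{m-1}\Bigl(1+\tfrac{\lambda q^j}{a}\Bigr)a^m=\prod_{j=0}^{m-1}\bigl(a+\lambda q^j\bigr),
\]
letting $a\to 0$ gives $(-\lambda/a)_m\,a^m\to\lambda^m q^{0+1+\cdots+(m-1)}=\lambda^m q^{m(m-1)/2}$, and hence
\[
  \frac{(-\lambda/a)_m\,a^m q^{m(m+1)/2}}{(-bq)_m}\longrightarrow\frac{\lambda^m q^{m(m-1)/2+m(m+1)/2}}{(-bq)_m}=\frac{\lambda^m q^{m^2}}{(-bq)_m},
\]
which is exactly the $m$-th summand on the left of \eqref{finite41}. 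On the right side, since $(-aq)_k=\prod_{j=1}^{k}(1+aq^j)\to 1$ as $a\to 0$ for each fixed $k$, the prefactor $(-aq)_n/(-bq)_n$ tends to $1/(-bq)_n$ and each factor $1/(-aq)_m$ tends to $1$; thus the right side of \eqref{finite4} tends to $\frac{1}{(-bq)_n}\sum_{m=0}^{n}\qbinom{n}{m}(-\lambda/b)_m b^m q^{m(m+1)/2}$, the right side of \eqref{finite41}. Passing to the limit term by term in \eqref{finite4} then yields \eqref{finite41}.

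There is essentially no obstacle here, as this is a clean specialization: the only computation to check is the elementary exponent identity $m(m-1)/2+m(m+1)/2=m^2$. For generic $b$ all the denominators $(-bq)_m$ are nonzero, and for small $a$ the denominators $(-aq)_m$ stay close to $1$, so no summand becomes singular during the limiting process; the finiteness of the sums makes the term-by-term limit rigorous without any further justification.
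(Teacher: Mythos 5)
Your proof is correct and matches the paper's own argument, which likewise obtains \eqref{finite41} simply by letting $a\to 0$ in \eqref{finite4}; you have merely spelled out the elementary term-by-term limit computations that the paper leaves implicit.
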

\begin{proof}
	Let $a\to 0$ in (\ref{finite4}) to get \eqref{finite41}. 
\end{proof}

	\begin{theorem}\label{theoremsigma5thm}
	\begin{align*}
		\sigma_2(q) = \frac{1}{(-q)_{\infty}} \sum_{n=0}^{\infty} q^{n(n+1)/2} \sum_{m=0}^{n} \qbinom{n-1}{m-1} \frac{q^{m^2}}{(q)_m}.
	\end{align*}
\end{theorem}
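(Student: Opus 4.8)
The plan is to deduce the theorem from the representation \eqref{theoremsigma3} already in hand, namely $\sigma_2(q)=\frac{1}{(-q)_{\infty}}\sum_{n=0}^{\infty}q^{n(n+1)/2}\sum_{i=0}^{n}\frac{(-1)^iq^{i(i-1)/2}}{(q)_{n-i}}$, by matching the two inner sums term by term in $n$. Concretely, it suffices to prove the finite identity
\begin{align*}
\sum_{i=0}^{n}\frac{(-1)^i q^{i(i-1)/2}}{(q)_{n-i}}=\sum_{m=0}^{n}\qbinom{n-1}{m-1}\frac{q^{m^2}}{(q)_m}
\end{align*}
for each $n\ge 1$ (the $n=0$ term needing only the boundary convention discussed below); multiplying by $q^{n(n+1)/2}/(-q)_{\infty}$ and summing over $n$ then yields the stated formula for $\sigma_2(q)$. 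Thus the entire content of the theorem reduces to this finite identity, and the engine for it is \eqref{finite41}.

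To prove the finite identity for $n\ge1$, I would first apply the $q$-Pascal recurrence $\qbinom{n-1}{m-1}=\qbinom{n}{m}-q^{m}\qbinom{n-1}{m}$ to split the right-hand side as $\sum_{m=0}^{n}\qbinom{n}{m}\frac{q^{m^2}}{(q)_m}-\sum_{m=0}^{n-1}\qbinom{n-1}{m}\frac{q^{m^2+m}}{(q)_m}$. Each of the two pieces is a specialization of \eqref{finite41} taken with $b=-1$, so that $(-bq)_m=(q;q)_m$. For the first sum, choosing $\lambda=1$ makes the factor $(-\lambda/b)_m=(1;q)_m$ on the right of \eqref{finite41} vanish for every $m\ge1$, so the first sum collapses to $1/(q)_n$. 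For the second sum, choosing $\lambda=q$ and replacing $n$ by $n-1$ gives $(-\lambda/b)_m=(q;q)_m$, and since $\qbinom{n-1}{m}(q)_m=(q)_{n-1}/(q)_{n-1-m}$ the right side of \eqref{finite41} becomes $\sum_{m=0}^{n-1}\frac{(-1)^m q^{m(m+1)/2}}{(q)_{n-1-m}}$. Hence $\sum_{m=0}^{n}\qbinom{n-1}{m-1}\frac{q^{m^2}}{(q)_m}=\frac{1}{(q)_n}-\sum_{m=0}^{n-1}\frac{(-1)^m q^{m(m+1)/2}}{(q)_{n-1-m}}$, and reindexing the last sum by $m\mapsto i-1$ and absorbing the surviving $1/(q)_n$ as the $i=0$ term identifies the right side with $\sum_{i=0}^{n}\frac{(-1)^i q^{i(i-1)/2}}{(q)_{n-i}}$, which is exactly what is needed.

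Since the argument is a short chain of specializations and reindexings, there is no genuine obstacle, only bookkeeping to get right. The two delicate points are: first, choosing the specializations of \eqref{finite41} so that the Pochhammer $(1;q)_m$ appears and annihilates all but the $m=0$ term -- this is the step that produces the clean $1/(q)_n$ and is the only slightly inspired move; and second, checking that the $q$-exponents and signs survive the reindexing $m\mapsto i-1$, i.e. that $q^{m(m+1)/2}\big|_{m=i-1}=q^{i(i-1)/2}$ and $(-1)^{i-1}=-(-1)^i$, which is what lets the negative of the collapsed second sum reproduce the $i\ge1$ part of the target. One should also record the harmless convention at $n=0$: the empty-type $q$-binomial coefficient $\qbinom{-1}{-1}$ occurring there is read as $1$, so that the $n=0$ term of the claimed series matches the $n=0$ term of \eqref{theoremsigma3}.
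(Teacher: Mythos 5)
Your proposal is correct and follows essentially the paper's route: both arguments deduce the theorem from \eqref{theoremsigma3} by establishing the finite identity $\sum_{i=0}^{n}\frac{(-1)^i q^{i(i-1)/2}}{(q)_{n-i}}=\sum_{m=0}^{n}\qbinom{n-1}{m-1}\frac{q^{m^2}}{(q)_m}$ as a specialization of \eqref{finite41}. The only difference is bookkeeping: the paper gets this in one stroke by putting $\lambda=1$, $b=-q^{-1}$ in \eqref{finite41} and dividing by $(q)_n$, whereas you split via the $q$-Pascal recurrence and use two $b=-1$ specializations ($\lambda=1$, and $\lambda=q$ with $n\mapsto n-1$), all of which check out, including your convention $\qbinom{-1}{-1}=1$ for the $n=0$ term.
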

	\begin{proof}
		From \eqref{finite41},
			\begin{align*}
			\sum_{m=0}^{n} \qbinom{n}{m} \left(-bq^{m+1}\right)_{n-m}{\lambda}^mq^{m^2} =  \sum_{m=0}^{n} \qbinom{n}{m} (-\lambda/b)_mb^mq^{m(m+1)/2}.
		\end{align*}
	Now, put $\lambda=1$ and $b=-q^{-1}$ in the above equation to get
	\begin{align*}
		\sum_{m=0}^{n} \qbinom{n}{m} q^{m^2}\left(q^{m}\right)_{n-m} &= (q)_n\sum_{m=0}^{n} \frac{(-1)^mq^{m(m-1)/2}}{(q)_{n-m}}.
	\end{align*}
	We now apply this identity to rephrase (\ref{theoremsigma3}) in the form
	\begin{align*}
	\sigma_2(q) &= \frac{1}{(-q)_{\infty}} \sum_{n=0}^{\infty} \frac{q^{n(n+1)/2}}{(q)_n} \sum_{m=0}^{n} \qbinom{n}{m} q^{m^2}\left(q^{m}\right)_{n-m}\\  &= \frac{1}{(-q)_{\infty}} \sum_{n=0}^{\infty} q^{n(n+1)/2} \sum_{m=0}^{n} \qbinom{n-1}{m-1} \frac{q^{m^2}}{(q)_m}.
	\end{align*}
	This proves the result.
\end{proof}
Armed with the results developed so far, we are now ready to prove our main theorem. 
\begin{proof}[Theorem \textup{\ref{theorem main}}][]
The result follows from \eqref{theoremnrp} and Theorem \ref{theoremsigma5thm}.
\end{proof}

\begin{proof}[Theorem \textup{\ref{non-rascoe odd}}][]
	From Theorem \ref{theorem main}, 
	\begin{align}\label{fir}
		\sum_{n=0}^{\infty} b(n) q^n = (-q)_{\infty} \sum_{n=0}^{\infty} \frac{(-1)^nq^{n^2}}{(-q)_n} \equiv (q)_{\infty}\sum_{n=0}^{\infty} \frac{q^{n^2}}{(q)_n} \pmod{2}.
	\end{align}
	Now using the first Rogers--Ramanujan identity \eqref{rr1} and the following special case of Jacobi's triple product identity \cite[p.~21, Theorem 2.8]{gea}, namely,
	\begin{align}\label{jtpi1}
		\sum_{n=-\infty}^{\infty}(-1)^nq^{n(5n+1)/2}=(q^2;q^5)_{\infty}(q^3;q^5)_{\infty}(q^5;q^5)_{\infty},
	\end{align}
	we arrive at
	\begin{align}\label{sec}
		(q)_{\infty}\sum_{n=0}^{\infty} \frac{q^{n^2}}{(q)_n} = \frac{(q)_{\infty}}{(q;q^5)_{\infty}(q^4;q^5)_{\infty}} = (q^2;q^5)_{\infty}(q^3;q^5)_{\infty}(q^5;q^5)_{\infty} = \sum_{n=-\infty}^{\infty} (-1)^nq^{n(5n+1)/2}.
	\end{align}
	From \eqref{fir} and \eqref{sec}, we conclude that $b(n)$ is odd iff  $n=\frac{m(5m+1)}{2}, m\in\mathbb{Z}$.
\end{proof}

\begin{proof}[Theorem \textup{\ref{p(i,n)}}][]
	Let $z=-1$ in (\ref{GenP}) and make use of Theorem \ref{theorem main} to obtain the desired result.
\end{proof}

\section{Hecke-Rogers type representations of $\sigma_2(q)$}\label{analytic}

The two representations of $\sigma_2(q)$ of Hecke-Rogers type as well as an identity between two finite analogues of Fine's function form the content of this section.
\begin{proof}[Theorem \textup{\ref{hecke-type}}][]
We first prove \textup{(i)}. Use \cite[Theorem 1.9]{Liu} with $\alpha,c=-1$ and $a,b\to0$ to get
\begin{align*}
	\sum_{n=0}^{\infty} \frac{(-1)^nq^{n^2}}{(-q)_n}=\frac{1}{(-q)_{\infty}} \bigg\{ 1+ \sum_{n=1}^{\infty} \sum_{j=0}^{n} \frac{(-1)_j(1+q^{2n})(-1)^{n+j}q^{(n(5n-1)/2)+j-jn}}{(q)_j(1+q^n)} \bigg\}.
\end{align*}
To prove $\textup{(ii)}$, let $y=-1$ in \cite[Theorem 18, Equation (12.3)]{AndPPI} to get
\begin{align}\label{appliedthm18}
	\sum_{n=0}^{\infty} \frac{a^nq^{n^2}}{(-q)_n}=\frac{1}{(aq)_{\infty}} \bigg\{ 1+\sum_{n=1}^{\infty} \frac{(-a)^nq^{2n^2}(a^2q^2;q^2)_{n-1}(1-aq^{2n})(1+a)}{(q^2;q^2)_n} \sum_{m=0}^{\infty} \frac{(q^{-n})_m(aq^n)_m(-q)^m}{(-a)_m(q)_m} \bigg\}.
\end{align}
Invoke Heine's second iterate \cite[p.~38]{gea}, that is,
\begin{align}\label{second iterate}
	\sum_{n=0}^{\infty}\frac{(\alpha)_n(\beta)_nt^n}{(\gamma)_n(q)_n}=\frac{(\gamma/\beta)_{\infty}(\beta t)_\infty}{(\gamma)_\infty(t)_\infty}\sum_{n=0}^{\infty}\frac{(\beta)_n(\alpha\beta t/\gamma)_n(\gamma/\beta)^n}{(q)_n(\beta t)_n},
\end{align}
with $\alpha=aq^n$, $\beta=q^{-n}, \gamma=-a$ and $t=-q$ so as to get
\begin{align}\label{abc1}
	\lim_{a\to-1} (1+a)\sum_{m=0}^{\infty} \frac{(q^{-n})_m(aq^n)_m(-q)^m}{(-a)_m(q)_m} &= \lim_{a\to-1} (1+a) \frac{(-aq^n)_{\infty}(-q^{1-n})_{\infty}}{(-a)_{\infty}(-q)_{\infty}} \sum_{m=0}^{\infty} \frac{(q^{-n})_m(-aq^n)^m}{(-q^{1-n})_m}\nonumber\\
	&= \frac{(-q^{1-n})_{n}}{(q)_{n-1}} \sum_{m=0}^{\infty} \frac{(q^{-n})_m}{(-q^{1-n})_m} q^{nm}.
\end{align}
Now use Andrews' ``incomplete'' summation \cite[Equation (2.15)]{AndPTF} with $a=q^{-n},b=-q^{1-n},N=n-1$ to get
\begin{align*}
	\frac{(-q^{1-n})_{n}}{(q)_{n-1}} \sum_{m=0}^{\infty} \frac{(q^{-n})_m}{(-q^{1-n})_m} q^{nm} &= \frac{(-q^{1-n})_{n}(1+q^n)(-1)^{n-1}q^{n^2-n}}{(-1)_n} \sum_{m=0}^{n-1} \frac{(-1)_m}{(q)_m} (-q^{-n})^m \\
	&= (1+q^n)(-1)^{n-1}q^{n(n-1)/2} \sum_{m=0}^{n-1} \frac{(-1)_m}{(q)_m} (-q^{-n})^m.
\end{align*}
Substituting the above equation in (\ref{abc1}) gives
\begin{align*}
	\lim_{a\to-1} (1+a)\sum_{m=0}^{\infty} \frac{(q^{-n})_m(aq^n)_m(-q)^m}{(-a)_m(q)_m} = (1+q^n)(-1)^{n-1}q^{n(n-1)/2} \sum_{m=0}^{n-1} \frac{(-1)_m}{(q)_m} (-q^{-n})^m.
\end{align*}
Now let $a\to-1$ in (\ref{appliedthm18}) and use the above limit evaluation to get
\begin{align*}
	\sum_{n=0}^{\infty} \frac{(-1)^nq^{n^2}}{(-q)_n}=\frac{1}{(-q)_{\infty}} \bigg\{ 1-\sum_{n=1}^{\infty} \frac{(-1)^nq^{n(5n-1)/2}(1+q^{2n})}{(1-q^n)} \sum_{m=0}^{n-1} \frac{(-1)_m}{(q)_m} (-q^{-n})^m \bigg\}.
\end{align*}
\end{proof}

We now prove \eqref{theoremfinite1} with the help of which one can derive (i) of  Theorem \ref{hecke-type} from (ii) or vice-versa.
 
\begin{proof}[ \textup{\ref{theoremfinite1}}][]
	Use \cite[Equation (4.6)]{And5n7} with $a,b=-1$ to get
	\begin{align}\label{finite1}
		1+\sum_{j=1}^{n-1} \frac{(-q)_{j-1}(1+q^{2j})(-1)_jq^{-j^2}}{(q)_j^2}=\frac{(-1)^{n-1}(-q)_{n-1}}{(q)_{n-1}} \sum_{j=0}^{n-1} \frac{(-1)_j(-1)^jq^{j-jn}}{(q)_j}.
	\end{align}
	Now add  $\frac{(-q)_{n-1}(1+q^{2n})(-1)_nq^{-n^2}}{(q)_n^2}$ on both sides, and add and subtract $\frac{(-q)_{n-1}(-1)_nq^{n-n^2}}{(q)_{n-1}(q)_n}$ on the right-hand side so that
	\begin{align}\label{finite2}
		&1+\sum_{j=1}^{n} \frac{(-q)_{j-1}(1+q^{2j})(-1)_jq^{-j^2}}{(q)_j^2}\nonumber\\  &=\frac{(-q)_{n-1}(-1)_nq^{-n^2}}{(q)_n}\bigg\{ \frac{(1+q^{2n})}{(q)_n} + \frac{q^n}{(q)_{n-1}} \bigg\}+\frac{(-1)^{n-1}(-q)_{n-1}}{(q)_{n-1}} \sum_{j=0}^{n} \frac{(-1)_j(-1)^jq^{j-jn}}{(q)_j}\nonumber\\ &=\frac{(-q)_n(-1)_nq^{-n^2}}{(q)_n^2}-\frac{(-1)^n(-q)_{n-1}}{(q)_{n-1}} \sum_{j=0}^{n} \frac{(-1)_j(-1)^jq^{j-jn}}{(q)_j}.
	\end{align}
	Now replace $n$ by $n+1$ in (\ref{finite1}) and then separate the $j=n$ term from the right-hand side to get
	\begin{align}\label{finite3}
		1+\sum_{j=1}^{n} \frac{(-q)_{j-1}(1+q^{2j})(-1)_jq^{-j^2}}{(q)_j^2}
		=\frac{(-q)_n(-1)_nq^{-n^2}}{(q)_n^2} + \frac{(-1)^n(-q)_n}{(q)_n} \sum_{j=0}^{n-1} \frac{(-1)_j(-1)^jq^{-jn}}{(q)_j}.
	\end{align}
	Now compare (\ref{finite2}) and (\ref{finite3}) to get the required result.
\end{proof}
\begin{remark}
One can also prove the identity in Theorem \ref{theoremfinite1} by using the recurrence relation for the finite analogue of Fine's function, namely $F(a, b, t, N)$, given in \cite[Lemma 2.1]{andrews-bell}.

\end{remark}
\section{A generalization of $\sigma_2(q)$ and generalized non-Rascoe partitions}\label{gen rascoe}

Let $\ell\in\mathbb{N}\cup\{0\}$. We define a generalized Rogers--Ramanujan partition of a number $N$ as a partition of $N$ into parts $>\ell$ and in which the difference between any two parts is greater than or equal to 2. 

Garrett, Ismail and Stanton \cite[Equation (3.5)]{garrett-ismail-stanton} showed that the generating function of the number of generalized Rogers--Ramanujan partitions, that is $\sum_{n=0}^{\infty}\frac{q^{n^2+\ell n}}{(q)_n}$, satisfies the generalized Rogers--Ramanujan identity
\begin{align}\label{gis identity}
\sum_{n=0}^{\infty}\frac{q^{n^2+\ell n}}{(q)_n}=\frac{(-1)^{\ell}q^{-\ell(\ell-1)/2}c_\ell(q)}{(q;q^5)_\infty(q^4;q^5)_\infty}-\frac{(-1)^{\ell}q^{-\ell(\ell-1)/2}d_\ell(q)}{(q^2;q^5)_\infty(q^3;q^5)_\infty},
\end{align}
where
\begin{align*}
c_\ell(q):=\sum_{j}(-1)^{j}q^{\frac{j(5j-3)}{2}}\qbinom{\ell-1}{\left\lfloor\frac{\ell+1-5j}{2}\right\rfloor},\hspace{5mm}
d_\ell(q):=\sum_{j}(-1)^{j}q^{\frac{j(5j+1)}{2}}\qbinom{\ell-1}{\left\lfloor\frac{\ell-1-5j}{2}\right\rfloor}
\end{align*}
are Laurent polynomials in $q$.

Let $R_{\ell}(m,N)$ denotes the number of generalized Rogers--Ramanujan partitions of $N$ with rank $m$. In what follows, we use simple combinatorics to obtain its generating function.
\begin{theorem}
	For $\ell\in\mathbb{N}\cup\{0\}$,
	\begin{align}\label{theoremgrrp}
		\sum_{N=0}^{\infty} \sum_{m=-\infty}^{\infty} R_{\ell}(m,N) z^m q^N = \sum_{n=0}^{\infty} \frac{z^{n+\ell-1}q^{n^2+\ell n}}{(zq)_n}.
	\end{align}
\end{theorem}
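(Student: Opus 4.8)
The plan is to adapt the combinatorial argument that proved Theorem \ref{theoremrrp}, changing only the staircase that is peeled off so as to absorb the requirement that every part exceed $\ell$. Fix $n$ to be the number of parts of a generalized Rogers--Ramanujan partition of $N$ and let $k$ be its largest part. Because the parts are distinct with consecutive gaps at least $2$ and the smallest part is at least $\ell+1$, the $j$-th part counted from the bottom is at least $\ell+2j-1$ for each $1\le j\le n$, so we may strip $\ell+2j-1$ nodes from that part. This removes $\sum_{j=1}^{n}(\ell+2j-1)=n^2+\ell n$ nodes in total and contributes the factor $q^{n^2+\ell n}$; it is exactly the generalization of the $1+3+\cdots+(2n-1)=n^2$ staircase of Figure \ref{fig1}.

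Next, what remains to the right of this staircase is an arbitrary partition into at most $n$ parts, hence generated by $1/(zq)_n$ with $z$ recording its largest part, which equals $k-(\ell+2n-1)$. Since the rank of the original partition is $k-n$, I would insert the compensating factor $z^{n+\ell-1}$ so that the exponent of $z$ becomes the rank, giving $z^{n+\ell-1}q^{n^2+\ell n}/(zq)_n$ as the generating function for the generalized Rogers--Ramanujan partitions with exactly $n$ parts, with $z$ tracking the rank. Summing over $n\ge 0$, with the same boundary convention for the empty partition as in Theorem \ref{theoremrrp}, yields \eqref{theoremgrrp}.

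An alternative, shorter route I would also note deduces the identity straight from Theorem \ref{theoremrrp}: adding $\ell$ to every part is a bijection from ordinary Rogers--Ramanujan partitions with exactly $n$ parts onto generalized ones with exactly $n$ parts, under which the size increases by $\ell n$ and the rank (largest part minus number of parts) increases by $\ell$; thus the term $z^{n-1}q^{n^2}/(zq)_n$ isolated in the proof of Theorem \ref{theoremrrp} is merely multiplied by $z^{\ell}q^{\ell n}$, and summing over $n$ again gives \eqref{theoremgrrp}. Neither approach presents a real obstacle; the single point deserving a sentence of justification is that the peeled staircase never overshoots a part, which is precisely the defining inequality that the $j$-th smallest part is $\ge\ell+2j-1$.
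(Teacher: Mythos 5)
Your first argument is precisely the paper's proof: peel the staircase with $\ell+2j-1$ nodes in the $j$-th part, contributing $q^{n^2+\ell n}$, let $1/(zq)_n$ generate the remainder with $z$ tracking $k-(\ell+2n-1)$, and insert $z^{n+\ell-1}$ so that $z$ records the rank $k-n$, then sum over $n$. Your alternative deduction from Theorem \ref{theoremrrp} via the add-$\ell$-to-every-part bijection is a correct, slightly quicker variant, but in substance the proposal coincides with the paper's combinatorial argument.
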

\begin{proof}
	Let $n$ be the number of parts and $k$ be the largest part of a partition enumerated by $R_{\ell}(m,N)$.
	\begin{figure}[hbt!]
		\begin{center}
			\includegraphics[width=0.50\textwidth]{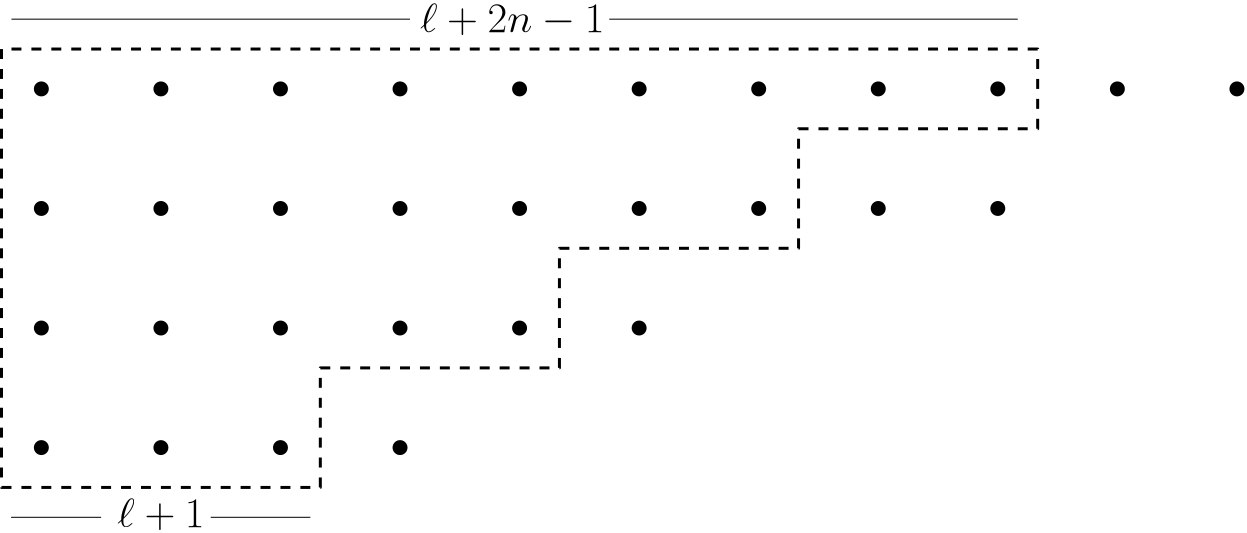}
			\caption{A generalized Rogers--Ramanujan partition}
			\label{fig3}
		\end{center}
	\end{figure}
	
	Observe from Figure \ref{fig3} that the number of nodes in the dotted region is equal to $(\ell+1)+(\ell+3)+\cdots+(\ell+2n-1)=n^2+\ell n$. Also, $1/(zq)_n$ generates the partition to the right of the dotted region with $z$ keeping track of $k-(\ell+2n-1)$.  Thus, $\frac{z^{n+\ell-1}q^{n^2+\ell n}}{(zq)_n}$ gives us the generalized Rogers--Ramanujan partitions into $n$ number of parts, with each part $>\ell$, and where $z$ keeps track of the rank of the partition, which is $k-n$. The desired generating function is then obtained by summing these terms corresponding to $n$ from $0$ to $\infty$.
\end{proof}

	\begin{theorem}
	Let $L_{\ell}(m,N)$ be the number of partitions of a number $N$ into $m$ parts, with largest part, say $n$, repeating at least $n+\ell$ times. Then, $L_{\ell}(m,N)$ is equal to the generalized Rogers--Ramanujan partitions of the number $N$ with rank $m-1$. In other words, $L_{\ell}(m,N)=R_{\ell}(m-1,N)$.
\end{theorem}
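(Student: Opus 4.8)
The plan is to imitate the proof of the $\ell=0$ version of this statement (the theorem $L(m,N)=R(m-1,N)$ proved above) and reduce the assertion to an identity of generating functions. By \eqref{theoremgrrp},
\begin{align*}
\sum_{N=0}^{\infty}\sum_{m=-\infty}^{\infty}R_{\ell}(m,N)z^m q^N=\sum_{n=0}^{\infty}\frac{z^{n+\ell-1}q^{n^2+\ell n}}{(zq)_n},
\end{align*}
so multiplying through by $z$ shows that the numbers $R_{\ell}(m-1,N)$ are generated by $\sum_{n\geq 0}z^{n+\ell}q^{n^2+\ell n}/(zq)_n$. Hence it suffices to establish
\begin{align*}
\sum_{N=0}^{\infty}\sum_{m=0}^{\infty}L_{\ell}(m,N)z^m q^N=\sum_{n=0}^{\infty}\frac{z^{n+\ell}q^{n^2+\ell n}}{(zq)_n}.
\end{align*}

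To prove this, I would fix the largest part $n\geq 1$ of a partition counted by $L_{\ell}(m,N)$ and dissect its Ferrers diagram in the spirit of Figure \ref{fig2}: since the part $n$ occurs at least $n+\ell$ times, the top $n\times(n+\ell)$ rectangle is a genuine subdiagram, contributing $q^{n(n+\ell)}=q^{n^2+\ell n}$ to the weight and accounting for exactly $n+\ell$ of the parts, whence the factor $z^{n+\ell}$. Removing this rectangle leaves behind an arbitrary partition all of whose parts are $\leq n$, which is generated by $1/(zq)_n=\prod_{i=1}^{n}(1-zq^i)^{-1}$ with $z$ marking the number of those residual parts; conversely, re-attaching the rectangle on top of any such partition produces a valid partition counted by $L_{\ell}$ with largest part $n$. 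Thus the partitions counted by $L_{\ell}$ with largest part $n$ are generated by $z^{n+\ell}q^{n^2+\ell n}/(zq)_n$, and summing over $n\geq 0$ — the term $n=0$ contributing $z^{\ell}$, which corresponds by convention to the empty partition exactly as the $n=0$ term does in \eqref{theoremgrrp} — yields the displayed identity and therefore $L_{\ell}(m,N)=R_{\ell}(m-1,N)$.

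An equivalent route, paralleling the Remark after the $\ell=0$ theorem, is to exhibit the bijection directly: from a generalized Rogers--Ramanujan partition of $N$ one strips off the staircase $(\ell+1)+(\ell+3)+\cdots+(\ell+2n-1)$ of Figure \ref{fig3}, conjugates the remainder (a partition with at most $n$ parts, all $\leq$ the former largest part), and glues the $n\times(n+\ell)$ rectangle on top; tracking the sizes shows that the number of parts of the image exceeds the rank $k-n$ of the original partition by exactly $1$, which is precisely the shift $m\mapsto m-1$. Either argument works, and I would present whichever sits more naturally alongside the earlier combinatorial proofs.

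The only delicate point is the bookkeeping of the $z$-exponent and of the index shift: one must check that the identity ``rank $=$ largest part $-$ number of parts'' for $R_{\ell}$ turns, once the rectangle has been peeled off, into ``$z$ records the total number of parts'' for $L_{\ell}$, and in particular that the value $\ell-1$ which \eqref{theoremgrrp} implicitly attaches to the rank of the empty partition is consistent with counting the empty partition as having $\ell$ parts on the $L_{\ell}$ side. Beyond that, the proof is a routine transcription of the $\ell=0$ case, with $n^2$ replaced everywhere by $n^2+\ell n$ and the $n\times n$ square by the $n\times(n+\ell)$ rectangle.
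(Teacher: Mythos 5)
Your proposal is correct and follows essentially the same route as the paper: you establish the generating function $\sum_{n\geq 0} z^{n+\ell}q^{n^2+\ell n}/(zq)_n$ for $L_{\ell}(m,N)$ by peeling off the $(n+\ell)\times n$ rectangle from the Ferrers diagram, with $1/(zq)_n$ generating the remainder, and then compare with \eqref{theoremgrrp} to obtain the shift $m\mapsto m-1$. The bijective alternative and the remarks on the $n=0$ convention are fine additions but not needed; the core argument matches the paper's proof.
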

\begin{proof}
	This result is proved by showing that the generating function of $L_{\ell}(m,N)$ is
	\begin{align*}
		\sum_{N=0}^{\infty} \sum_{m=0}^{\infty} L_{\ell}(m,N) z^m q^N = \sum_{n=0}^{\infty} \frac{z^{n+\ell}q^{n^2+\ell n}}{(zq)_n},
	\end{align*}
	and then comparing it with \eqref{theoremgrrp}. 
	\begin{figure}[hbt!]
		\begin{center}
			\includegraphics[width=0.25\textwidth, height=0.25\textheight]{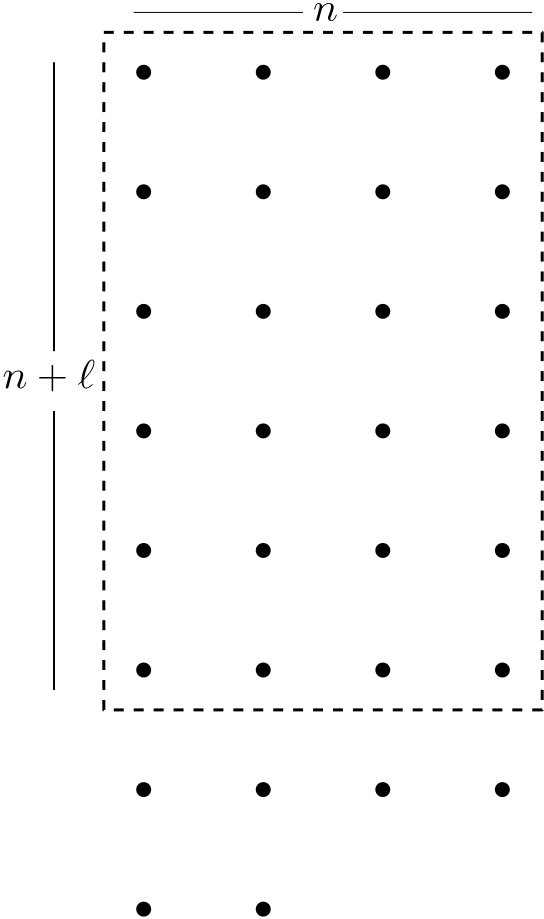}
			\caption{A partition enumerated by $L_{\ell}(m,N)$}
			\label{fig4}
		\end{center}
	\end{figure}
	Indeed, as shown in Figure \ref{fig4}, if $n$ is the largest part of the partition enumerated by $L_\ell(m, N)$, then we construct a rectangle whose width is $n+\ell$. It is clear that $1/(zq)_n$ generates the partition below the rectangle with $z$ keeping track of  the total number of parts minus $(n+\ell)$. So $\frac{z^{n+\ell}q^{n^2+\ell n}}{(zq)_n}$ generates partitions with $n$ as the largest part, and where $z$ keeps track of the number of parts of the partition.

\end{proof}

We now define generalized Rascoe and non-Rascoe partitions. Let $\ell\in\mathbb{N}\cup\{0\}$ and let $n$ denote the number of parts of a partition. A \emph{generalized Rascoe partition} of $N$, associated with $\ell$, is a partition of $N$ into distinct parts containing $n+\ell$ as a part. Similarly a \emph{generalized non-Rascoe partition}  of a number $N$, associated with $\ell$, is a partition of $N$ into distinct parts not containing $n+\ell$ as a part.

Let $a_\ell(N)$ and $b_\ell(N)$ respectively denote the number of generalized Rascoe and non-Rascoe partitions of $N$, associated with $\ell$. Clearly, $a_0(N)=a(N)$ and $b_0(N)=b(N)$, where $a(N)$ and $b(N)$ are defined in the introduction. The generating functions of $a_\ell(N)$ and $b_\ell(N)$ are obtained below using combinatorics.
\begin{theorem}\label{after defn grgnr}
For	$\ell\in\mathbb{N}\cup\{0\}$,
	\begin{align}\label{theoremgrp}
		\sum_{N=1}^{\infty} a_\ell(N) q^N = \sum_{n=1}^{\infty} q^{n(n+1)/2} \sum_{m=0}^{n-1} \qbinom{n+\ell-1}{m+\ell} \frac{q^{(m+\ell)(m+1)}}{(q)_m},
	\end{align}
	\begin{align}\label{theoremgnrp}
		\sum_{N=1}^{\infty} b_\ell(N) q^N = \sum_{n=1}^{\infty} q^{n(n+1)/2} \sum_{m=0}^{n} \qbinom{n+\ell-1}{m+\ell-1} \frac{q^{m^2+\ell m}}{(q)_m}.
	\end{align}
\end{theorem}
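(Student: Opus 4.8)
The plan is to mirror, almost verbatim, the four--region Ferrers diagram decomposition used to prove Theorem \ref{theoremrp-thm-1}, now carrying the shift by $\ell$ through each region. Throughout I would fix $n$ to be the number of parts of the partition and let $m$ denote the number of parts that exceed $n+\ell$.

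\emph{The generalized Rascoe case \eqref{theoremgrp}.} Because the partition has distinct parts and contains $n+\ell$, with $m$ parts strictly above it, the part $n+\ell$ occupies position $n-m$ counted from the bottom, forcing $0\le m\le n-1$. I would first peel off the distinctness staircase (the $k$-th part from the bottom loses $k$ nodes, $1\le k\le n$), contributing $q^{n(n+1)/2}$; this is region $A$. In the top $m+1$ rows, the part $n+\ell$ is reduced to $m+\ell$ nodes, and distinctness of the $m$ parts above it ensures that all $m+1$ of these rows retain width at least $m+\ell$, giving an $(m+1)\times(m+\ell)$ rectangle (region $B$, contributing $q^{(m+1)(m+\ell)}$), their overhang above it being an arbitrary partition into at most $m$ parts (region $C$, contributing $1/(q)_m$). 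The bottom $n-m-1$ rows, after the staircase removal, become a partition into at most $n-m-1$ parts each of size at most $m+\ell$ --- the bound coming from the fact that the original parts there are $\le n+\ell-1$ --- and are generated by $\qbinom{n+\ell-1}{m+\ell}$ (region $D$). Multiplying the four contributions and summing over $0\le m\le n-1$ and $n\ge 1$ gives \eqref{theoremgrp}.

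\emph{The generalized non-Rascoe case \eqref{theoremgnrp}.} Now $n+\ell$ is forbidden as a part. With $m$ again the number of parts exceeding $n+\ell$, distinctness forces those $m$ parts to be at least $n+\ell+1,\dots,n+\ell+m$, contributing $q^{m(n+\ell)+m(m+1)/2}$ (region $A$), their excess over this minimal configuration recorded by $1/(q)_m$ (region $C$). Unlike in Theorem \ref{theoremrp-thm-1}, the value $m=0$ is now admissible --- precisely because $\ell$ may be positive --- so $m$ runs over $0\le m\le n$. The remaining $n-m$ parts are all $\le n+\ell-1$; removing a staircase from them (region $B$, contributing $q^{(n-m)(n-m+1)/2}$) leaves a partition into at most $n-m$ parts each $\le m+\ell-1$, generated by $\qbinom{n+\ell-1}{m+\ell-1}$ (region $D$). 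A short computation gives $m(n+\ell)+\tfrac{m(m+1)}{2}+\tfrac{(n-m)(n-m+1)}{2}=\tfrac{n(n+1)}{2}+m^2+\ell m$, and assembling the pieces yields \eqref{theoremgnrp}. Setting $\ell=0$ recovers \eqref{theoremrp} and \eqref{theoremnrp}.

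The argument involves no hard analysis; the only delicate point is the bookkeeping of the upper bounds and the index ranges. One must verify that after the staircase removal the residual bottom region genuinely has at most $n-m-1$ (resp.\ $n-m$) parts, each of size at most $m+\ell$ (resp.\ $m+\ell-1$), and that $m$ ranges over $0\le m\le n-1$ for Rascoe and $0\le m\le n$ for non-Rascoe. Both facts follow from the distinctness of the parts together with the (non)membership of $n+\ell$, which ensures that no part lies strictly between $n+\ell$ and the smallest part above it. Once this is nailed down, figures analogous to Figures \ref{figure3} and \ref{figure 4} make both identities transparent.
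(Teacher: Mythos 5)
Your proposal is correct and follows essentially the same route as the paper: the same four-region Ferrers diagram decomposition (staircase, $(m+1)\times(m+\ell)$ rectangle, overhang generated by $1/(q)_m$, and the bottom block generated by the Gaussian binomial), with the same exponent bookkeeping $m(n+\ell)+\tfrac{m(m+1)}{2}+\tfrac{(n-m)(n-m+1)}{2}=\tfrac{n(n+1)}{2}+m^2+\ell m$ and the same index ranges $0\le m\le n-1$ and $0\le m\le n$.
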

\begin{proof}
	Assume $n$ to be the number of parts of a partition of $N$, and $m$ to be the number of parts greater than $n+\ell$.
	
	Consider a generalized Rascoe partition given in Figure \ref{figure grp}, where $\ell=2$ and $n=5$. Note that $n+\ell=7$ is a part. Also, $m=2$ since there are $2$ parts larger than $7$.
	\begin{figure}[hbt!]
		\begin{center}
			\includegraphics[width=0.50\textwidth]{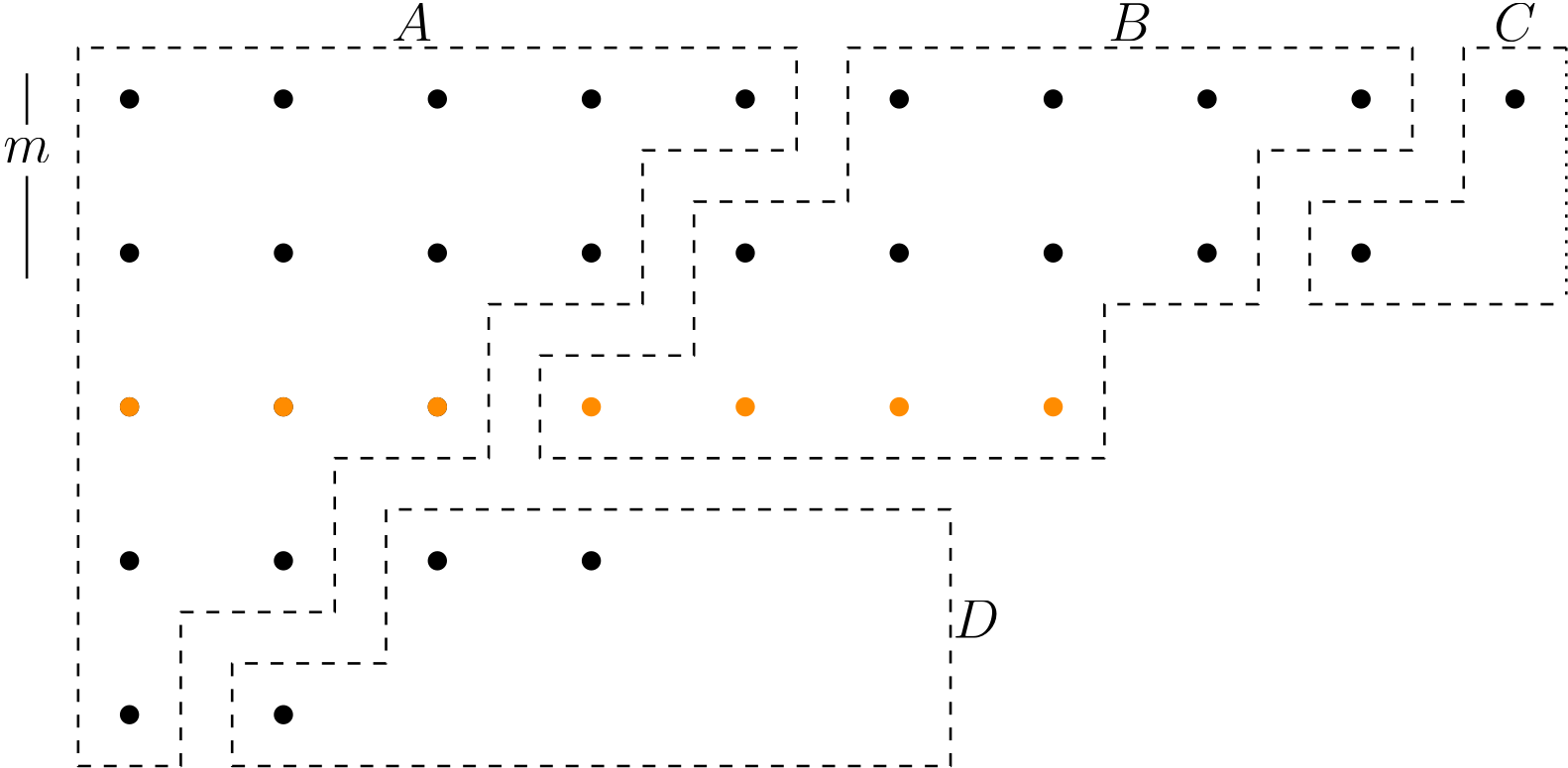}
			\caption{A generalized Rascoe partition}\label{figure grp}
		\end{center}
	\end{figure}
	Note that region $A$ has $n(n+1)/2$ nodes which contribute $q^{n(n+1)/2}$ towards the generating function of $a_\ell(N)$. The region $B$ has $m+1$ rows of exactly $m+\ell$ nodes each, thereby contributing $q^{(m+\ell)(m+1)}$ towards the generating function. Next, the region $C$ contributes $1/(q)_m$ as there can be at most $m$ number of parts. Finally, the region $D$ has at most $n-m-1$ parts, each $\le m+\ell$, and therefore contributes $\qbinom{n+\ell-1}{m+\ell}$. So, the generalized Rascoe partitions of $N$, associated with $\ell$, having $n$ as the number of parts, are generated by
	\begin{align*}
		q^{n(n+1)/2} \sum_{m=0}^{n-1} \qbinom{n+\ell-1}{m+\ell} \frac{q^{(m+\ell)(m+1)}}{(q)_m}.
	\end{align*}
%
This proves \eqref{theoremgrp}. We next prove \eqref{theoremgnrp}.

	Consider a generalized non-Rascoe partition shown in Figure \ref{figure gnrp}. Again, we let $\ell=2$ and $n=5$. Observe that $n+\ell=7$ is not a part of the partition. We have $m=2$ since there are two parts greater than $7$.
	\begin{figure}[hbt!]
		\begin{center}
			\includegraphics[width=0.50\textwidth]{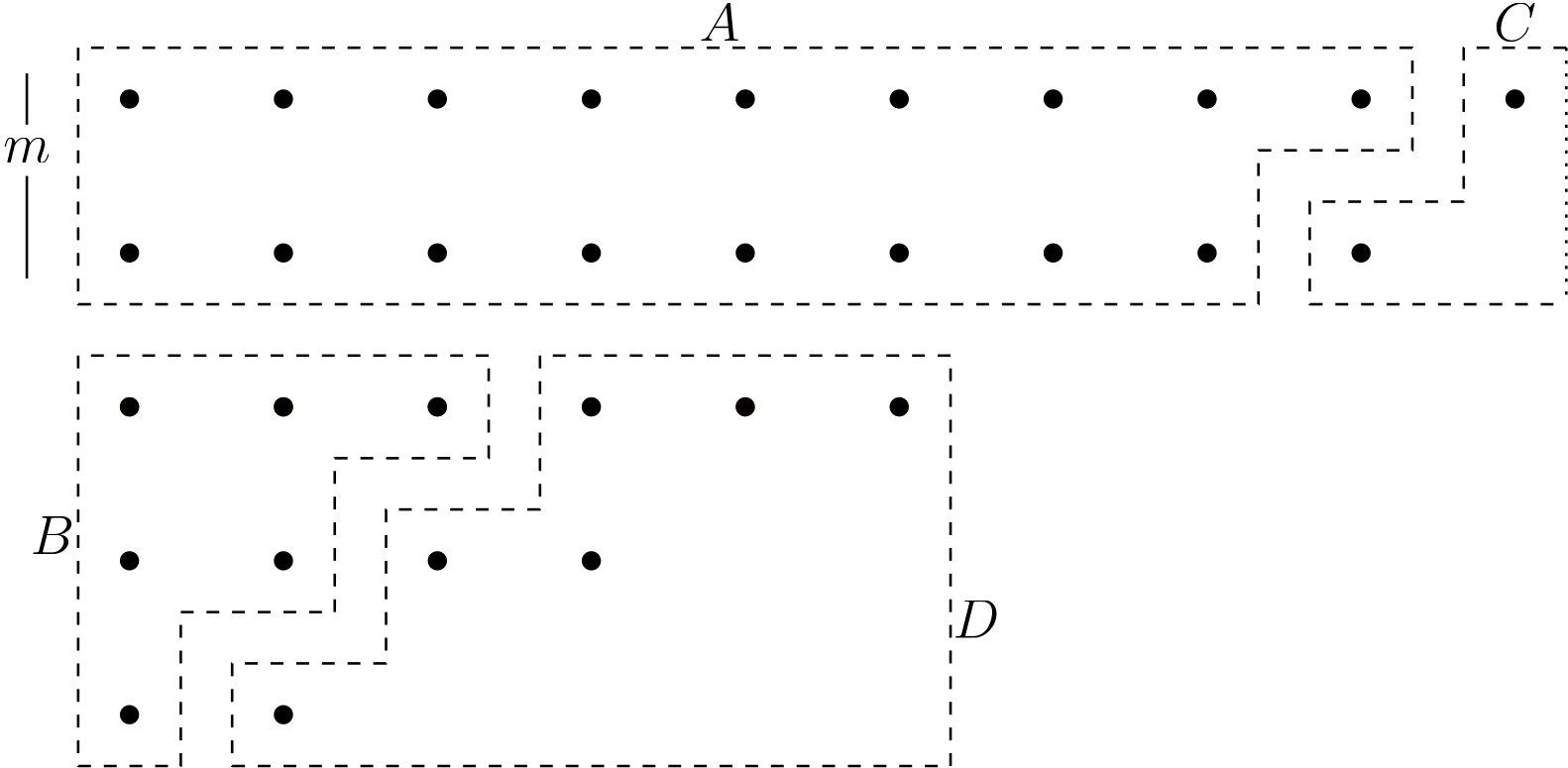}
			\caption{A generalized non-Rascoe partition}\label{figure gnrp}
		\end{center}
	\end{figure}
	Note that the region $A$ consists of exactly $m$ parts, namely, $n+\ell+1,n+\ell+2,\cdots,n+\ell+m$, thereby contributing $q^{nm+\ell m+m(m+1)/2}$ to the generating function of $b_\ell(N)$. The region $B$ consists of $n-m$ rows with exactly $(n-m)(n-m+1)/2$ nodes, contributing to $q^{(n-m)(n-m+1)/2}$. From region $C$, we get $1/(q)_m$ whereas region $D$ has at most $n-m$ parts, each $\le m+\ell-1$,  and hence gives $\qbinom{n+\ell-1}{m+\ell-1}$. Thus, the generalized non-Rascoe partitions, associated with $\ell$, and having $n$ number of parts are generated by
	\begin{align*}
		\sum_{m=0}^{n} \qbinom{n+\ell-1}{m+\ell-1} \frac{q^{nm+\ell m+m(m+1)/2}q^{(n-m)(n-m+1)/2}}{(q)_m}.
	\end{align*}
	Summing over $n$ from $1$ to $\infty$ leads us to \eqref{theoremgnrp}.
\end{proof}

\begin{theorem}\label{added just before submitting}
	\begin{align}\label{theoremsmall1 ell}
	\sum_{n=0}^{\infty} \frac{z^nq^{n^2+\ell n}}{(-q)_n} = \frac{z^{-\ell}}{(-q)_{\infty}} \sum_{n=0}^{\infty} q^{n(n+1)/2-\ell n} \sum_{i=\ell}^{n} \frac{z^iq^{i(i-1)/2}}{(q)_{n-i}}.
\end{align}
\end{theorem}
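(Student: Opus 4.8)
The plan is to deduce \eqref{theoremsmall1 ell} directly from equation \eqref{theoremsmall1} of Theorem \ref{theorem0} by a single substitution followed by a reindexing of the double sum; no new combinatorial ingredient is needed. First I would replace $z$ by $zq^{\ell}$ in \eqref{theoremsmall1}. Its left-hand side then becomes $\sum_{n=0}^{\infty}z^{n}q^{n^{2}+\ell n}/(-q)_{n}$, which is already the left-hand side of \eqref{theoremsmall1 ell}, so the whole task reduces to rewriting the resulting right-hand side $\frac{1}{(-q)_{\infty}}\sum_{n\ge 0}q^{n(n+1)/2}\sum_{i=0}^{n}z^{i}q^{\ell i+i(i-1)/2}/(q)_{n-i}$ in the claimed form.

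The key elementary facts about the exponents are $\ell i+i(i-1)/2=(i+\ell)(i+\ell-1)/2-\ell(\ell-1)/2$ and $(n-\ell)(n-\ell+1)/2=n(n+1)/2-\ell n+\ell(\ell-1)/2$. Pulling the common factor $q^{-\ell(\ell-1)/2}$ out of the inner sum, shifting the inner summation index $i\mapsto i-\ell$ (so that it now runs from $\ell$ to $n+\ell$ and contributes an overall factor $z^{-\ell}$), and then shifting the outer index $n\mapsto n-\ell$ — the terms with $n<\ell$ disappear because the inner sum is then empty — transforms the right-hand side into $\frac{z^{-\ell}}{(-q)_{\infty}}\sum_{n\ge 0}q^{n(n+1)/2-\ell n}\sum_{i=\ell}^{n}z^{i}q^{i(i-1)/2}/(q)_{n-i}$, the two occurrences of $q^{\pm\ell(\ell-1)/2}$ cancelling. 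This is exactly \eqref{theoremsmall1 ell}.

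A self-contained conjugation proof in the spirit of Section \ref{comb} is also available: let $P_{\ell}(j,N)$ count the partitions of $N$ into distinct parts with the sole exception that the smallest part $j$ appears exactly $j+\ell$ times (for $j=0$ this degenerates to the ordinary distinct-part partitions). Its bivariate generating function, with $z$ marking $j$, is $\sum_{j\ge 0}z^{j}q^{j^{2}+\ell j}(-q^{j+1})_{\infty}=(-q)_{\infty}\sum_{j\ge 0}z^{j}q^{j^{2}+\ell j}/(-q)_{j}$, so dividing by $(-q)_{\infty}$ returns the left-hand side of \eqref{theoremsmall1 ell}. Conjugating such a partition turns the $j$ copies of the part $j$ into $j$ copies of the largest part $n$ (contributing $q^{jn}$, whose value is harmlessly independent of $n$ when $j=0$), while the portion beneath the resulting $j\times n$ block is the conjugate of a partition into $m:=n-j-\ell$ distinct parts and is generated by $q^{m(m+1)/2}/(q)_{m}$; reading off $q^{jn}q^{(n-j-\ell)(n-j-\ell+1)/2}/(q)_{n-j-\ell}$, summing over $n$ and $j$, and setting $i=j+\ell$ yields the right-hand side after the same exponent computation as above. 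Either way the proof presents no genuine obstacle; the one thing to watch is the bookkeeping of the $q$-exponents and of the summation ranges during the shifts — in particular confirming that the lower limit $i=\ell$ in the final sum arises exactly and that no terms are created or lost.
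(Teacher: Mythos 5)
Your proposal is correct, and your primary route is genuinely different from the paper's. You obtain \eqref{theoremsmall1 ell} purely algebraically: substitute $z\mapsto zq^{\ell}$ in the already-proved identity \eqref{theoremsmall1} of Theorem \ref{theorem0} and then reindex ($i\mapsto i-\ell$, $n\mapsto n-\ell$), the two factors $q^{\pm\ell(\ell-1)/2}$ cancelling and the empty inner sums for $n<\ell$ letting you restore the outer range to $n\geq 0$; I checked the exponent identities and the shifts, and they are exactly right. The paper instead reruns the conjugation argument of Theorem \ref{theorem0} for the generalized counting function $P_{\ell}(j,N)$, first recording its bivariate generating function \eqref{GenPell}, then conjugating to reach an intermediate identity, and finally performing the same shift $i\mapsto i-\ell$. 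Your shortcut is cleaner and makes transparent that Theorem \ref{added just before submitting} is formally equivalent to the $\ell=0$ case; what the paper's route buys is the identity \eqref{GenPell} itself, which is needed later (with $z=-1$) in the proof of Theorem \ref{p(i,n) ell}, so if you adopt the substitution proof you would still have to establish \eqref{GenPell} separately for that application. Your second, conjugation-based argument is essentially the paper's proof; its only delicate point is the degenerate case $j=0$ (where ``the smallest part $0$ repeated $\ell$ times'' must be interpreted as an ordinary distinct-part partition, with $n$ then standing for the number of parts plus $\ell$), which you flag parenthetically and which is harmless, as the $i=\ell$ terms sum to $(-q)_{\infty}$ by Euler's theorem, matching the $j=0$ term on the other side.
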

\begin{proof}
Let $P_{\ell}(j, N)$ be the number of partitions of $N$ into distinct parts with the exception that the smallest part, say $j$, is allowed to repeat exactly $j+\ell$ number of times. Arguing similarly as in the proof of  \eqref{theoremsmall1}, one obtains
	\begin{align}\label{GenPell}
	\sum_{N=0}^{\infty} \sum_{i=0}^{N} P_\ell(i,N) z^i q^N = \sum_{i=0}^{\infty} z^i q^{i^2+\ell i} (-q^{i+1})_{\infty}.
\end{align}
 Indeed, following the remainder of the proof of \eqref{theoremsmall1}, one is also led to
\begin{align*}
\sum_{n=0}^{\infty} \frac{z^nq^{n^2+\ell n}}{(-q)_n} = \frac{q^{\ell(\ell-1)/2}}{(-q)_{\infty}} \sum_{n=0}^{\infty} q^{n(n+1)/2-\ell n} \sum_{i=0}^{n} \frac{z^iq^{i(i-1)/2+\ell i}}{(q)_{n-i-\ell}}.
\end{align*}
Now replace $i$ by $i-\ell$ in the above identity to arrive at \eqref{theoremsmall1 ell}.
\end{proof}
\begin{theorem}
	For $\ell\in\mathbb{Z}^{+}\cup\{0\}$,
	\begin{align}\label{sigmageneral}
		\sum_{n=0}^{\infty} \frac{(-1)^nq^{n^2+\ell n}}{(-q)_n} = \frac{1}{(-q)_{\infty}} \sum_{n=0}^{\infty} q^{n(n+1)/2} \sum_{m=0}^{n} \qbinom{n+\ell-1}{m+\ell-1} \frac{q^{m^2+\ell m}}{(q)_m}.
	\end{align}
\end{theorem}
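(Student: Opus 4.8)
The plan is to carry out the $\ell$-deformed version of the proof of Theorem~\ref{theoremsigma5thm}: begin with the conjugation identity \eqref{theoremsmall1 ell} of Theorem~\ref{added just before submitting} at $z=-1$, simplify the index of summation, and then apply the right specialization of \eqref{finite41}. First, setting $z=-1$ in \eqref{theoremsmall1 ell} gives
\begin{align*}
\sum_{n=0}^{\infty} \frac{(-1)^nq^{n^2+\ell n}}{(-q)_n} = \frac{(-1)^{\ell}}{(-q)_{\infty}} \sum_{n=0}^{\infty} q^{n(n+1)/2-\ell n} \sum_{i=\ell}^{n} \frac{(-1)^iq^{i(i-1)/2}}{(q)_{n-i}}.
\end{align*}

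Next, in the outer sum I would replace $n$ by $n+\ell$. The new terms with $n<\ell$ vanish because the inner sum is then empty, so the range stays $n\geq 0$, and the elementary identity $(n+\ell)(n+\ell+1)/2-\ell(n+\ell)=n(n+1)/2-\ell(\ell-1)/2$ rewrites the power of $q$; in the inner sum, now running over $\ell\leq i\leq n+\ell$, I would substitute $i=j+\ell$ and use $(j+\ell)(j+\ell-1)/2=j(j-1)/2+\ell j+\ell(\ell-1)/2$. This extracts a factor $(-1)^{\ell}q^{\ell(\ell-1)/2}$ which, against the prefactor $(-1)^{\ell}$ and the $q^{-\ell(\ell-1)/2}$ just produced, cancels completely, leaving
\begin{align*}
\sum_{n=0}^{\infty} \frac{(-1)^nq^{n^2+\ell n}}{(-q)_n} = \frac{1}{(-q)_{\infty}} \sum_{n=0}^{\infty} q^{n(n+1)/2} \sum_{j=0}^{n} \frac{(-1)^jq^{j(j-1)/2+\ell j}}{(q)_{n-j}}.
\end{align*}

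The crux is then the finite identity (the $\ell$-analogue of the one used in the proof of Theorem~\ref{theoremsigma5thm})
\begin{align*}
\sum_{m=0}^{n} \frac{(-1)^mq^{m(m-1)/2+\ell m}}{(q)_{n-m}} = \sum_{m=0}^{n} \qbinom{n+\ell-1}{m+\ell-1} \frac{q^{m^2+\ell m}}{(q)_m}.
\end{align*}
To prove it, I would multiply \eqref{finite41} through by $(-bq)_n$, obtaining $\sum_{m}\qbinom{n}{m}(-bq^{m+1})_{n-m}\lambda^mq^{m^2}=\sum_m\qbinom{n}{m}(-\lambda/b)_mb^mq^{m(m+1)/2}$, and then take $\lambda=q^{\ell}$, $b=-q^{\ell-1}$ (the choice $\lambda=1$, $b=-q^{-1}$ used in Theorem~\ref{theoremsigma5thm} being the case $\ell=0$). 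Then $-bq^{m+1}=q^{m+\ell}$ and $-\lambda/b=q$, so the identity becomes $\sum_m\qbinom{n}{m}(q^{m+\ell})_{n-m}q^{m^2+\ell m}=(q)_n\sum_m(-1)^mq^{m(m-1)/2+\ell m}/(q)_{n-m}$; now $(q^{m+\ell})_{n-m}=(q)_{n+\ell-1}/(q)_{m+\ell-1}$ and $(q)_{n-m}(q)_{m+\ell-1}\qbinom{n+\ell-1}{m+\ell-1}=(q)_{n+\ell-1}$, which collapses the left side to $(q)_n\sum_m\qbinom{n+\ell-1}{m+\ell-1}q^{m^2+\ell m}/(q)_m$, and dividing by $(q)_n$ finishes it. Substituting this finite identity into the last display yields exactly \eqref{sigmageneral}.

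The only genuine work is bookkeeping: making the signs and powers of $q$ produced by the double reindexing in the second step cancel precisely, and checking the two $q$-shifted-factorial simplifications at the endpoints $m=0$ and $m=n$. The case $\ell=0$ — where $(q^{m+\ell})_{n-m}$ picks up a vanishing factor at $m=0$ and the $n=0$ term needs the convention $\qbinom{-1}{-1}=1$ — is already Theorem~\ref{theoremsigma5thm}, so one may simply assume $\ell\geq 1$ throughout and no analytic subtlety arises. (As a by-product, combining \eqref{sigmageneral} with \eqref{theoremgnrp} then gives the $\ell$-generalization of Theorem~\ref{theorem main}.)
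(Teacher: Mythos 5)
Your proposal is correct and follows essentially the same route as the paper: the same intermediate double-sum identity and the same specialization $\lambda=q^{\ell}$, $b=-q^{\ell-1}$ of \eqref{finite41} to transform the inner sum. The only cosmetic difference is that you reach that intermediate identity by putting $z=-1$ in \eqref{theoremsmall1 ell} and reindexing $n\mapsto n+\ell$, $i\mapsto j+\ell$, whereas the paper gets it in one stroke by setting $z=-q^{\ell}$ in \eqref{theoremsmall1}.
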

\begin{proof}
	First let $z=-q^\ell$ in (\ref{theoremsmall1}) so as to get
	\begin{align}\label{xyz}
		\sum_{n=0}^{\infty} \frac{(-1)^nq^{n^2+\ell n}}{(-q)_n} = \frac{1}{(-q)_{\infty}} \sum_{n=0}^{\infty} q^{n(n+1)/2} \sum_{m=0}^{n} \frac{(-1)^mq^{\ell m+m(m-1)/2}}{(q)_{n-m}}.
	\end{align}
	Now, put $\lambda=q^{\ell}$ and $b=-q^{\ell-1}$ in (\ref{finite41}) to obtain
	\begin{align*}
		\sum_{m=0}^{n} \qbinom{n}{m} \frac{q^{m^2+\ell m}}{(q^{\ell})_m} = \frac{(q)_n}{(q^{\ell})_n} \sum_{m=0}^{n} \frac{(-1)^mq^{\ell m+m(m-1)/2}}{(q)_{n-m}}.
	\end{align*}
	Using the above identity to represent the inner sum of (\ref{xyz}) leads to
	\begin{align*}
		\sum_{n=0}^{\infty} \frac{(-1)^nq^{n^2+\ell n}}{(-q)_n} &= \frac{1}{(-q)_{\infty}} \sum_{n=0}^{\infty} \frac{(q^{\ell})_nq^{n(n+1)/2}}{(q)_n} \sum_{m=0}^{n} \qbinom{n}{m} \frac{q^{m^2+\ell m}}{(q^{\ell})_m}\\ &= \frac{1}{(-q)_{\infty}} \sum_{n=0}^{\infty} q^{n(n+1)/2} \sum_{m=0}^{n} \frac{(q^{\ell})_n}{(q^{\ell})_m(q)_{n-m}} \frac{q^{m^2+\ell m}}{(q)_m}\\ &= \frac{1}{(-q)_{\infty}} \sum_{n=0}^{\infty} q^{n(n+1)/2} \sum_{m=0}^{n} \qbinom{n+\ell-1}{m+\ell-1} \frac{q^{m^2+\ell m}}{(q)_m}.
	\end{align*}
\end{proof}

Now we define a rank parity function associated to the generalized Rogers--Ramanujan partitions, namely,
\begin{align*}
	\sigma_{2,\ell}(q):=\sum_{n=0}^{\infty} \frac{(-1)^nq^{n^2+\ell n}}{(-q)_n}.
\end{align*}
Letting $z=-1$ in (\ref{theoremgrrp}), we see that if $\ell$ is even, $\sigma_{2,\ell}(q)$ is the generating function of the excess number of generalized Rogers--Ramanujan partitions corresponding to odd rank over those with even rank, and when $\ell$ is odd, it is the excess number of such partitions with even rank over those with odd rank. Clearly, $\sigma_{2,0}(q)=\sigma_2(q)$.

\begin{theorem}\label{main theorem general ell}
	Let $\ell\in\mathbb{N}\cup\{0\}$. Then,
	\begin{align*}
		\sum_{n=0}^{\infty} b_{\ell}(n) q^n = (-q)_{\infty} \sigma_{2,\ell}(q).
	\end{align*}
	In other words, $(-q)_{\infty} \sigma_{2,\ell}(q)$ generates the generalized non-Rascoe partitions.
\end{theorem}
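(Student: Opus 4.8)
The plan is to obtain Theorem~\ref{main theorem general ell} as an immediate consequence of two generating-function identities already established, exactly as Theorem~\ref{theorem main} was deduced from \eqref{theoremnrp} and Theorem~\ref{theoremsigma5thm}. Indeed, \eqref{sigmageneral} asserts that
\begin{align*}
	(-q)_{\infty}\,\sigma_{2,\ell}(q)=\sum_{n=0}^{\infty} q^{n(n+1)/2} \sum_{m=0}^{n} \qbinom{n+\ell-1}{m+\ell-1} \frac{q^{m^2+\ell m}}{(q)_m},
\end{align*}
while \eqref{theoremgnrp} identifies the subsum over $n\geq 1$ of precisely this double sum with $\sum_{N=1}^{\infty} b_\ell(N) q^N$. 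So all that remains is to handle the term $n=0$.

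First I would peel off the $n=0$ summand of the right-hand side of \eqref{sigmageneral}: its inner sum has only the term $m=0$, which equals $q^{0}\,\qbinom{\ell-1}{\ell-1}\,q^{0}/(q)_0=1$ under the usual empty-product conventions (with $\qbinom{-1}{-1}$ read as $1$ in the degenerate case $\ell=0$, consistent with the use of $\qbinom{n-1}{m-1}$ in Theorem~\ref{theoremsigma5thm}). Next I would note that the empty partition has $0$ parts and that $0+\ell=\ell$ is not among its (nonexistent) parts, so it is a generalized non-Rascoe partition and $b_\ell(0)=1$. Combining these with \eqref{theoremgnrp} gives
\begin{align*}
	(-q)_{\infty}\,\sigma_{2,\ell}(q)=1+\sum_{N=1}^{\infty} b_\ell(N) q^N=\sum_{n=0}^{\infty} b_\ell(n)q^n,
\end{align*}
which is the desired identity.

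The substantive content has already been carried out: \eqref{theoremgnrp} was proved by the Ferrers-diagram dissection of a typical generalized non-Rascoe partition, and \eqref{sigmageneral} was obtained by specializing $z$ in \eqref{theoremsmall1} and invoking the finite Bhargava--Adiga/Srivastava identity \eqref{finite41}. Consequently there is no real obstacle in the present proof; the only point meriting attention is the bookkeeping at $n=0$, namely verifying that the isolated term of the double sum is exactly $1$ so that it correctly accounts for the empty partition, which is not counted by the sum in \eqref{theoremgnrp}.
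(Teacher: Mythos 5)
Your proposal is correct and follows exactly the paper's route: the theorem is deduced by comparing \eqref{theoremgnrp} with \eqref{sigmageneral}, which is precisely the paper's one-line proof. Your extra bookkeeping of the $n=0$ term (matching the constant $1$ with $b_\ell(0)=1$ for the empty partition) is a careful addition the paper leaves implicit, but it does not change the argument.
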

\begin{proof}
	The result follows from (\ref{theoremgnrp}) and (\ref{sigmageneral}).
\end{proof}
We now obtain a generalization of Theorem \ref{p(i,n)} for any $\ell\in\mathbb{N}\cup\{0\}$.
\begin{theorem}\label{p(i,n) ell}
	Let $P_{\ell}(j,n)$ be defined as in the proof of Theorem \ref{added just before submitting}. Then the excess number of such partitions with even smallest part over those with odd smallest part equals the number of generalized non-Rascoe partitions of $n$, that is,
	\begin{equation*}
		\sum_{j=0}^{n}(-1)^jP_{\ell}(j, n)=b_\ell(n).
	\end{equation*}
\end{theorem}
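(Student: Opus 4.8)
The plan is to mirror the proof of Theorem \ref{p(i,n)} in the generalized setting, the key input being the generating function identity \eqref{GenPell} established in the proof of Theorem \ref{added just before submitting}. First I would specialize $z=-1$ in \eqref{GenPell}, so that the variable marking the smallest part $j$ is replaced by the sign $(-1)^j$. This turns the left-hand side into $\sum_{N=0}^{\infty}\bigl(\sum_{j=0}^{N}(-1)^jP_\ell(j,N)\bigr)q^N$, which is exactly the generating function of the excess quantity in the statement.

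Next I would simplify the right-hand side $\sum_{i=0}^{\infty}(-1)^iq^{i^2+\ell i}(-q^{i+1})_{\infty}$. Writing $(-q^{i+1})_{\infty}=(-q)_{\infty}/(-q)_i$, this becomes $(-q)_{\infty}\sum_{i=0}^{\infty}\frac{(-1)^iq^{i^2+\ell i}}{(-q)_i}=(-q)_{\infty}\,\sigma_{2,\ell}(q)$ by the definition of $\sigma_{2,\ell}(q)$. At this point I would invoke Theorem \ref{main theorem general ell}, which asserts precisely that $(-q)_{\infty}\,\sigma_{2,\ell}(q)=\sum_{n=0}^{\infty}b_\ell(n)q^n$. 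Equating the two expressions for this series and comparing the coefficient of $q^n$ on both sides yields $\sum_{j=0}^{n}(-1)^jP_\ell(j,n)=b_\ell(n)$, which is the claim; the case $\ell=0$ recovers Theorem \ref{p(i,n)}, since $P_0(j,n)=P(j,n)$ and $b_0(n)=b(n)$.

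I expect no genuine obstacle here: once \eqref{GenPell} is available, the argument is a one-line specialization followed by elementary bookkeeping with $q$-Pochhammer symbols. The only point requiring care is the justification of \eqref{GenPell} itself, namely that the Ferrers-diagram argument used to prove \eqref{theoremsmall1} carries over verbatim when the smallest part $j$ is permitted to repeat $j+\ell$ times rather than $j$ times (the dotted rectangle now contributes $q^{i^2+\ell i}$ instead of $q^{i^2}$, and $(-q^{i+1})_{\infty}$ still generates the part above it). Since this was exactly the content asserted in the proof of Theorem \ref{added just before submitting}, I would simply cite it rather than reprove it.
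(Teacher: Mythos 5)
Your proposal is correct and follows exactly the paper's own route: specializing $z=-1$ in \eqref{GenPell}, recognizing the right-hand side as $(-q)_{\infty}\,\sigma_{2,\ell}(q)$, and then invoking Theorem \ref{main theorem general ell} to compare coefficients of $q^n$. No gaps; the appeal to the combinatorial derivation of \eqref{GenPell} from the proof of Theorem \ref{added just before submitting} is precisely how the paper handles it as well.
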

\begin{proof}
The proof follows from \eqref{GenPell} with $z=-1$ and Theorem \ref{main theorem general ell}. 
\end{proof}

The theorem we now derive allows one to explicitly obtain the parity of $b_\ell(n)$ for any $\ell$. We illustrate this for $\ell=1$ and $2$ in the corollary following its proof.
\begin{theorem}\label{lcong}
Let $\ell\in\mathbb{N}\cup\{0\}$ and let $b_\ell(n)$ denote the generalized non-Rascoe partitions of $n$ defined before Theorem \ref{after defn grgnr}. Also, let $p(N, M, k)$ denote the number of partitions of $k$ into at most $M$ parts, each $\leq N$. Then, we have
\begin{align*}
\sum_{n=0}^{\infty} b_{\ell}(n) q^n
&\equiv\sum_{j,m=-\infty\atop{k=0}}^{\infty}\left(p\left(\ell-1-\left\lfloor\tfrac{\ell+1-5j}{2}\right\rfloor,\left\lfloor\tfrac{\ell+1-5j}{2}\right\rfloor, k\right)+p\left(\ell-1-\left\lfloor\tfrac{\ell-1-5m}{2}\right\rfloor,\left\lfloor\tfrac{\ell-1-5m}{2}\right\rfloor,k\right)\right)\nonumber\\
&\qquad\qquad\times q^{\frac{j(5j-3)}{2}+\frac{m(5m+1)}{2}-\frac{\ell(\ell-1)}{2}+k}\pmod{2}.
\end{align*}
\end{theorem}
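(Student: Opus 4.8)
The plan is to start from Theorem~\ref{main theorem general ell}, which gives
\begin{align*}
\sum_{n=0}^{\infty}b_\ell(n)q^n=(-q)_\infty\sigma_{2,\ell}(q)=(-q)_\infty\sum_{n=0}^{\infty}\frac{(-1)^nq^{n^2+\ell n}}{(-q)_n},
\end{align*}
and then to reduce everything modulo $2$. Since $1+q^k\equiv 1-q^k\pmod 2$ coefficient by coefficient, we have $(-q)_\infty\equiv(q)_\infty$ and $(-q)_n\equiv(q)_n$, while $(-1)^n\equiv 1$, so that
\begin{align*}
\sum_{n=0}^{\infty}b_\ell(n)q^n\equiv(q)_\infty\sum_{n=0}^{\infty}\frac{q^{n^2+\ell n}}{(q)_n}\pmod 2 .
\end{align*}
The right-hand side is $(q)_\infty$ times the generating function of the generalized Rogers--Ramanujan partitions, so the next step is to feed in the Garrett--Ismail--Stanton identity \eqref{gis identity}.

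Multiplying \eqref{gis identity} through by $(q)_\infty$ and using $(q)_\infty=(q;q^5)_\infty(q^2;q^5)_\infty(q^3;q^5)_\infty(q^4;q^5)_\infty(q^5;q^5)_\infty$, the two denominators in \eqref{gis identity} cancel and one obtains
\begin{align*}
(q)_\infty\sum_{n=0}^{\infty}\frac{q^{n^2+\ell n}}{(q)_n}=(-1)^\ell q^{-\ell(\ell-1)/2}\Bigl(c_\ell(q)\,P_1(q)-d_\ell(q)\,P_2(q)\Bigr),
\end{align*}
where $P_1(q)=(q^2;q^5)_\infty(q^3;q^5)_\infty(q^5;q^5)_\infty$ and $P_2(q)=(q;q^5)_\infty(q^4;q^5)_\infty(q^5;q^5)_\infty$. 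By \eqref{jtpi1}, $P_1(q)=\sum_{m\in\mathbb{Z}}(-1)^mq^{m(5m+1)/2}$, and the companion specialization of Jacobi's triple product gives $P_2(q)=\sum_{j\in\mathbb{Z}}(-1)^jq^{j(5j-3)/2}$. Reducing modulo $2$ turns every sign into $+1$, so modulo $2$ the last display equals $q^{-\ell(\ell-1)/2}\bigl(c_\ell(q)\sum_m q^{m(5m+1)/2}+d_\ell(q)\sum_j q^{j(5j-3)/2}\bigr)$, with $c_\ell,d_\ell$ read modulo $2$ as well.

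I would then expand $c_\ell$ and $d_\ell$ from their definitions. Modulo $2$, $c_\ell(q)\equiv\sum_{j}q^{j(5j-3)/2}\qbinom{\ell-1}{\lfloor(\ell+1-5j)/2\rfloor}$, so $c_\ell(q)\sum_m q^{m(5m+1)/2}$ becomes $\sum_{j,m}q^{\,j(5j-3)/2+m(5m+1)/2}\qbinom{\ell-1}{\lfloor(\ell+1-5j)/2\rfloor}$; modulo $2$, $d_\ell(q)\equiv\sum_{m}q^{m(5m+1)/2}\qbinom{\ell-1}{\lfloor(\ell-1-5m)/2\rfloor}$ (after renaming its index to $m$), so $d_\ell(q)\sum_j q^{j(5j-3)/2}$ becomes $\sum_{j,m}q^{\,j(5j-3)/2+m(5m+1)/2}\qbinom{\ell-1}{\lfloor(\ell-1-5m)/2\rfloor}$. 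The decisive point is that both double series now carry the \emph{same} exponent $\tfrac{j(5j-3)}{2}+\tfrac{m(5m+1)}{2}$, so they merge into one double sum whose $(j,m)$-coefficient is $\qbinom{\ell-1}{\lfloor(\ell+1-5j)/2\rfloor}+\qbinom{\ell-1}{\lfloor(\ell-1-5m)/2\rfloor}$. Finally, interpreting a Gaussian polynomial as the generating function of partitions in a box, $\qbinom{N}{K}=\sum_{k\ge0}p(N-K,K,k)q^k$, and absorbing the prefactor $q^{-\ell(\ell-1)/2}$, yields precisely the asserted triple-sum congruence.

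Since the argument is a chain of substitutions, the main obstacle is bookkeeping rather than ideas: one must choose the correct (mutually equivalent) Jacobi-triple-product evaluations of $P_1$ and $P_2$ and rename the summation variables in $c_\ell$ and $d_\ell$ so that both contributions acquire the common exponent $\tfrac{j(5j-3)}{2}+\tfrac{m(5m+1)}{2}$; a secondary point is to use the convention that $p(N,M,k)$ counts partitions of $k$ into at most $M$ parts each $\le N$, which is exactly the $q$-series of $\qbinom{N+M}{M}$. One should also note that all the mod-$2$ reductions are legitimate, since every identity used holds over $\mathbb{Z}$ (in $q^{-\ell(\ell-1)/2}\mathbb{Z}[[q]]$), the $q$-binomials vanish for all but finitely many $j$ or $m$, and $p(N,M,k)$ vanishes for $k$ large, so the resulting triple sum is well defined coefficient by coefficient.
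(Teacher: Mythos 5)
Your proposal is correct and follows essentially the same route as the paper: reduce $(-q)_\infty\sigma_{2,\ell}(q)$ modulo $2$ to $(q)_\infty\sum_{n\ge0}q^{n^2+\ell n}/(q)_n$, insert the Garrett--Ismail--Stanton identity \eqref{gis identity}, evaluate the resulting products $(q^2;q^5)_\infty(q^3;q^5)_\infty(q^5;q^5)_\infty$ and $(q;q^5)_\infty(q^4;q^5)_\infty(q^5;q^5)_\infty$ by the two Jacobi triple product specializations, and expand the Gaussian polynomials in $c_\ell,d_\ell$ via $\qbinom{N+M}{M}=\sum_k p(N,M,k)q^k$. Your renaming of the index in $d_\ell$ so that both double sums carry the common exponent $\tfrac{j(5j-3)}{2}+\tfrac{m(5m+1)}{2}$ is exactly the paper's final step of interchanging $j$ and $m$ in the second triple sum.
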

\begin{proof}
From \eqref{gis identity} and Theorem \ref{main theorem general ell} and using the two special cases of the Jacobi triple product identity, namely, \eqref{jtpi1} and 
\begin{align*}
	\sum_{n=-\infty}^{\infty}(-1)^nq^{n(5n-3)/2}=(q;q^5)_{\infty}(q^4;q^5)_{\infty}(q^5;q^5)_{\infty},
\end{align*}
we have
\begin{align*}
\sum_{n=0}^{\infty} b_{\ell}(n) q^n &\equiv q^{-\ell(\ell-1)/2}(q)_{\infty}\left\{\frac{(-1)^{\ell}c_\ell(q)}{(q;q^5)_\infty(q^4;q^5)_\infty}+\frac{(-1)^{\ell}d_\ell(q)}{(q^2;q^5)_\infty(q^3;q^5)_\infty}\right\}\pmod{2}\\
&=q^{-\ell(\ell-1)/2}\bigg[\sum_{j}(-1)^{j}q^{\frac{j(5j-3)}{2}}\qbinom{\ell-1}{\left\lfloor\frac{\ell+1-5j}{2}\right\rfloor}\sum_{m=-\infty}^{\infty}(-1)^mq^{m(5m+1)/2}\nonumber\\
&\qquad\qquad\quad+\sum_{j}(-1)^{j}q^{\frac{j(5j+1)}{2}}\qbinom{\ell-1}{\left\lfloor\frac{\ell-1-5j}{2}\right\rfloor}\sum_{m=-\infty}^{\infty}(-1)^mq^{m(5m-3)/2}\bigg]\\
&\equiv\sum_{j,m=-\infty\atop{k=0}}^{\infty}p\left(\ell-1-\left\lfloor\tfrac{\ell+1-5j}{2}\right\rfloor,\left\lfloor\tfrac{\ell+1-5j}{2}\right\rfloor, k\right)q^{\frac{j(5j-3)}{2}+\frac{m(5m+1)}{2}-\frac{\ell(\ell-1)}{2}+k}\nonumber\\
&\quad+\sum_{j,m=-\infty\atop{k=0}}^{\infty}p\left(\ell-1-\left\lfloor\tfrac{\ell-1-5j}{2}\right\rfloor,\left\lfloor\tfrac{\ell-1-5j}{2}\right\rfloor,k\right)q^{\frac{j(5j+1)}{2}+\frac{m(5m-3)}{2}-\frac{\ell(\ell-1)}{2}+k}\pmod{2}
\end{align*}
where in the last step, we used the fact \cite[p.~33, Theorem 3.1]{gea} that
\begin{align*}
	\qbinom{N+M}{M}=\sum_{k=0}^{\infty} p(N,M,k)q^k.
\end{align*}
Now interchange $j$ and $m$ in the second triple sum to arrive at the desired result.
\end{proof}

\begin{corollary}
\textup{(i)} $b_1(n)$  is odd if and only if $n=\frac{m(5m-3)}{2}$, where $m\in\mathbb{Z}$,\\
 \textup{(ii)} $b_2(n)$ is odd if and only if $n=\frac{m(5m+1)}{2}-1$ or $n=\frac{m(5m-3)}{2}-1$, where $m\in\mathbb{Z}$.
\end{corollary}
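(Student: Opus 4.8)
The plan is to obtain both statements directly from Theorem~\ref{lcong} by specializing $\ell=1$ and $\ell=2$; the only real work is evaluating the handful of $q$-binomial coefficients $\qbinom{\ell-1}{r}$ that occur—equivalently, the numbers $p(N,M,k)$ with $N+M=\ell-1$—and keeping track of the floor expressions in their lower arguments.

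For $\ell=1$ one has $\ell(\ell-1)/2=0$, and the only nonzero coefficient of the form $\qbinom{0}{r}$ is $\qbinom{0}{0}=1$; thus $p(\ell-1-r,r,k)$ vanishes unless $r=0$ and $k=0$, in which case it equals $1$. I would then observe that $r=\lfloor(2-5j)/2\rfloor=0$ has no integer solution $j$, while $r=\lfloor(-5m)/2\rfloor=0$ forces $m=0$. Hence the first family of terms in Theorem~\ref{lcong} drops out and the second contributes only its terms with $m=0$ and $k=0$ (summed over $j$), leaving
\begin{align*}
\sum_{n=0}^{\infty}b_1(n)q^n\equiv\sum_{j\in\mathbb{Z}}q^{j(5j-3)/2}\pmod{2}.
\end{align*}
Since $j\mapsto j(5j-3)/2$ is injective on $\mathbb{Z}$ (an equality would force $5(j_1+j_2)=3$), every coefficient on the right is $0$ or $1$, which gives (i). One reaches the same point without Theorem~\ref{lcong}: for $\ell=1$ the identity \eqref{gis identity} is the second Rogers--Ramanujan identity, and Theorem~\ref{main theorem general ell} together with Jacobi's triple product reduces $\sum_{n} b_1(n)q^n$ modulo $2$ to the same theta series.

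For $\ell=2$ the argument runs the same way, now with $\ell(\ell-1)/2=1$. The nonzero coefficients of the form $\qbinom{1}{r}$ are $\qbinom{1}{0}=\qbinom{1}{1}=1$, so in every relevant case $p(\ell-1-r,r,k)$ equals $1$ if $k=0$ and $0$ otherwise. A short check gives $\lfloor(3-5j)/2\rfloor\in\{0,1\}$ only for $j=0$ and $\lfloor(1-5m)/2\rfloor\in\{0,1\}$ only for $m=0$, so Theorem~\ref{lcong} collapses to
\begin{align*}
\sum_{n=0}^{\infty}b_2(n)q^n\equiv q^{-1}\left(\sum_{m\in\mathbb{Z}}q^{m(5m+1)/2}+\sum_{j\in\mathbb{Z}}q^{j(5j-3)/2}\right)\pmod{2}.
\end{align*}
The coefficient of $q^{-1}$ on the right is $2\equiv 0\pmod{2}$, so both sides are honest power series, and since $m\mapsto m(5m+1)/2$ and $j\mapsto j(5j-3)/2$ are injective on $\mathbb{Z}$, $b_2(n)$ is odd exactly when $n+1$ is represented an odd number of times by the two forms together.

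The one step that is not a pure substitution—and hence the closest thing to an obstacle—is checking that the value sets $\{m(5m+1)/2\}$ and $\{j(5j-3)/2\}$ are disjoint on the positive integers, so that no cancellation modulo $2$ occurs and the inclusive ``or'' in (ii) is genuine. Here a common value $N$ satisfies $(10m+1)^2=40N+1$ and $(10j-3)^2=40N+9$, whence $(10j-3)^2-(10m+1)^2=8$; writing $8=(A-B)(A+B)$ with $A=10j-3\equiv 7\pmod{10}$ and $B=10m+1\equiv 1\pmod{10}$ forces $A=-3$, $B=1$, i.e.\ $j=m=0$ and $N=0$. Therefore for $n\geq 0$ at most one of the two forms represents $n+1$, and $b_2(n)$ is odd if and only if $n=m(5m+1)/2-1$ or $n=j(5j-3)/2-1$ for some integer, which is (ii). Apart from this short Diophantine observation and the routine floor-function bookkeeping inside Theorem~\ref{lcong}, no new ideas are needed.
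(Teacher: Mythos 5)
Your proposal is correct and follows essentially the same route as the paper: specialize Theorem \ref{lcong} at $\ell=1$ and $\ell=2$, note that the Gaussian binomials $\qbinom{\ell-1}{\cdot}$ (equivalently the $p(N,M,k)$) vanish except when the floor expressions force $j=0$ or $m=0$ with $k=0$, and read off the resulting theta-series congruences mod $2$. Your extra checks — the injectivity of $j\mapsto j(5j-3)/2$ and the Diophantine argument showing the forms $m(5m+1)/2$ and $j(5j-3)/2$ coincide only at $0$, so no mod-$2$ cancellation undermines the ``or'' in (ii) — are details the paper leaves implicit, and they are verified correctly.
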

\begin{proof}
	\textup{(i)} Let $\ell=1$ in Theorem \ref{lcong}, and simplify, thereby obtaining
	\begin{align*}
		\sum_{n=0}^{\infty} b_{1}(n) q^n
		&\equiv\sum_{j,m=-\infty\atop{k=0}}^{\infty}\left(p\left(-\left\lfloor\tfrac{2-5j}{2}\right\rfloor,\left\lfloor\tfrac{2-5j}{2}\right\rfloor, k\right)+p\left(-\left\lfloor\tfrac{-5m}{2}\right\rfloor,\left\lfloor\tfrac{-5m}{2}\right\rfloor,k\right)\right)q^{\frac{j(5j-3)}{2}+\frac{m(5m+1)}{2}+k}\pmod{2}\nonumber\\
		&\equiv\sum_{j=-\infty}^{\infty} q^{\frac{j(5j-3)}{2}}\pmod{2},
	\end{align*}
	where we used the facts $p\left(-\left\lfloor\tfrac{2-5j}{2}\right\rfloor,\left\lfloor\tfrac{2-5j}{2}\right\rfloor, k\right)=0$ for all $j\in\mathbb{Z}$ and  $p\left(-\left\lfloor\tfrac{-5m}{2}\right\rfloor,\left\lfloor\tfrac{-5m}{2}\right\rfloor,k\right)=1$ only when $m=k=0$, and is $0$ for all other values of $m$ and $k$.\\
	
	\textup{(ii)} Put $\ell=2$ in Theorem \ref{lcong}, and simplify to get
	\begin{align*}
		\sum_{n=0}^{\infty} b_{2}(n) q^n
		&\equiv\sum_{j,m=-\infty\atop{k=0}}^{\infty}\left(p\left(1-\left\lfloor\tfrac{3-5j}{2}\right\rfloor,\left\lfloor\tfrac{3-5j}{2}\right\rfloor, k\right)+p\left(1-\left\lfloor\tfrac{1-5m}{2}\right\rfloor,\left\lfloor\tfrac{1-5m}{2}\right\rfloor,k\right)\right)\nonumber\\
		&\qquad\qquad\times q^{\frac{j(5j-3)}{2}+\frac{m(5m+1)}{2}-1+k}\pmod{2}.\nonumber\\
		&\equiv\sum_{m=-\infty}^{\infty} q^{\frac{m(5m+1)}{2}-1}+\sum_{j=-\infty}^{\infty} q^{\frac{j(5j-3)}{2}-1}\pmod{2},
	\end{align*}
	since\footnote{Observe that $p(N,0,0)=p(0,M,0)=1$ for $N, M>0$, which is not clear in \cite[p.~34, Equation (3.2.4)]{gea}.} $p\left(1-\left\lfloor\tfrac{3-5j}{2}\right\rfloor,\left\lfloor\tfrac{3-5j}{2}\right\rfloor, k\right)=1$ only when $j=k=0$, and is $0$ for all other values of $j$ and $k$, and, similarly, $p\left(1-\left\lfloor\tfrac{1-5m}{2}\right\rfloor,\left\lfloor\tfrac{1-5m}{2}\right\rfloor,k\right)=1$ only when $m=k=0$, and is $0$ for all other values of $m$ and $k$.
\end{proof}

\begin{theorem}\label{lm1}
	\begin{align}\label{theoremgsigma1}
		\sigma_{2,\ell}(q)=\frac{1}{(-q)_{\infty}} \bigg\{(-q)_{\ell}-\sum_{n=1}^{\infty} (-1)^{n+\ell}q^{\frac{n(5n-1)}{2}+2\ell n}(1+q^{\ell+2n})(q^{n+1})_{\ell-1} \sum_{m=0}^{n+\ell-1} \frac{(-1)_m}{(q)_m} (-q^{-n})^m \bigg\}.
	\end{align}
\end{theorem}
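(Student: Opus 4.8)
The plan is to generalize the proof of Theorem \ref{hecke-type}(ii), replacing the limiting specialization $a\to-1$ there with the honest substitution $a=-q^{\ell}$, which is legitimate for $\ell\ge1$ (the case $\ell=0$ being precisely Theorem \ref{hecke-type}(ii), upon noting $(-q)_0=1$ and $(q^{n+1})_{-1}=(1-q^n)^{-1}$). Concretely, I would start from \eqref{appliedthm18}, i.e.\ Andrews' identity \cite[Theorem 18, Equation (12.3)]{AndPPI} with $y=-1$, which holds for an arbitrary parameter $a$, and put $a=-q^{\ell}$. On the left one obtains $\sum_{n\ge0}(-q^{\ell})^nq^{n^2}/(-q)_n=\sum_{n\ge0}(-1)^nq^{n^2+\ell n}/(-q)_n=\sigma_{2,\ell}(q)$, while the prefactor $1/(aq)_\infty=1/(-q^{\ell+1})_\infty=(-q)_\ell/(-q)_\infty$ already supplies the factor $(-q)_\ell$ and the correct outer factor $1/(-q)_\infty$ of the claimed identity. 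The bracketed sum becomes $1+\sum_{n\ge1}\frac{q^{\ell n}q^{2n^2}(q^{2\ell+2};q^2)_{n-1}(1+q^{\ell+2n})(1-q^{\ell})}{(q^2;q^2)_n}\,S_n$, where $S_n=\sum_{m\ge0}\frac{(q^{-n})_m(-q^{n+\ell})_m(-q)^m}{(q^{\ell})_m(q)_m}$ is a terminating sum.

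Next I would evaluate $S_n$ by the same two-step procedure as in the proof of Theorem \ref{hecke-type}(ii). First apply Heine's second iterate \eqref{second iterate} with $\alpha=-q^{n+\ell}$, $\beta=q^{-n}$, $\gamma=q^{\ell}$, $t=-q$; since $\gamma/\beta=q^{n+\ell}$, $\beta t=-q^{1-n}$ and $\alpha\beta t/\gamma=q$, the $(q)_m$'s cancel and one is left, with no limiting argument needed here because $(q^{\ell})_\infty\neq0$ for $\ell\ge1$, with $S_n=\frac{(-q^{1-n})_n}{(q^{\ell})_n}\sum_{m\ge0}\frac{(q^{-n})_m}{(-q^{1-n})_m}q^{(n+\ell)m}$. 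Then I would invoke Andrews' ``incomplete'' summation \cite[Equation (2.15)]{AndPTF}, the very identity used for $\ell=0$, but now with the geometric parameter equal to $q^{n+\ell}$, equivalently with $N=n+\ell-1$ in place of $N=n-1$. This is the step responsible for the two structural novelties: it converts the inner sum into the partial sum $\sum_{m=0}^{n+\ell-1}\frac{(-1)_m}{(q)_m}(-q^{-n})^m$, and it contributes a closed-form prefactor whose $q$-Pochhammer content ultimately produces the factor $(q^{n+1})_{\ell-1}$ (which collapses to $(1-q^n)^{-1}$ at $\ell=0$). After absorbing the $(1-q^\ell)$ one has $(1-q^{\ell})/(q^{\ell})_n=(q^{\ell+1};q)_{n-1}^{-1}$, matching the Heine part, as in the $\ell=0$ case.

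Finally I would collect everything. The powers of $q$ coming from $q^{\ell n}$, $q^{2n^2}$, the reflection formula $(-q^{1-n})_n=2q^{-n(n-1)/2}(-q)_{n-1}$, and the incomplete-summation prefactor must combine into $q^{n(5n-1)/2+2\ell n}$; this is exactly the elementary exponent computation $2n^2+n(n-1)/2=n(5n-1)/2$ from the proof of Theorem \ref{hecke-type}(ii), shifted by $2\ell n$. Simultaneously, the base-$q^2$ symbol $(q^{2\ell+2};q^2)_{n-1}$, the denominator $(q^2;q^2)_n$ (rewritten via $(q^2;q^2)_k=(q)_k(-q)_k$), the factor $(q^{\ell+1};q)_{n-1}^{-1}$, the $(-q)_{n-1}$ from the reflection formula, the $(-1)_n$ appearing in the incomplete summation, and the $(-q)_\ell$ pulled out at the start must all telescope down to $(q^{n+1})_{\ell-1}$, while the accumulated signs must deliver $(-1)^{n+\ell}$ together with the flip of $1+\sum_n$ into $(-q)_\ell-\sum_n$. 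The main obstacle is precisely this last piece of bookkeeping --- reconciling the base-$q^2$ factor with the base-$q$ factors so that exactly $(q^{n+1})_{\ell-1}$ survives, and tracking all signs and powers of $q$ --- which is routine but delicate; at each stage I would sanity-check against the $\ell=0$ specialization (recovering Theorem \ref{hecke-type}(ii)) and, if desired, verify $\ell=1$ directly. One can also regard this result as the $\ell$-generalization of Theorem \ref{hecke-type}(ii), and in principle deduce it from an $\ell$-analogue of Theorem \ref{hecke-type}(i) via Theorem \ref{theoremgfinite1} (the generalization of \eqref{theoremfinite1}), but the route through \eqref{appliedthm18} described above is the most direct.
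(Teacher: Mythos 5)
Your proposal is correct and follows essentially the same route as the paper: the paper's proof also takes $a\to -q^{\ell}$ in \eqref{appliedthm18}, transforms the inner sum via Heine's second iterate \eqref{second iterate}, and then applies Andrews' incomplete summation \cite[Equation (2.15)]{AndPTF} with $N=n+\ell-1$, leaving the final collection of factors (which indeed telescopes, via $(-q)_\ell(q)_\ell(q^{2\ell+2};q^2)_{n-1}=(q^2;q^2)_{n+\ell-1}$, to $(q^{n+1})_{\ell-1}$) as routine bookkeeping. Your observation that for $\ell\ge 1$ the substitution is non-singular, unlike the $a\to-1$ case of Theorem \ref{hecke-type}(ii), is accurate and does not change the argument.
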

\begin{proof}
The idea is to take the limit $a\to-q^{\ell}$ on both sides of \eqref{appliedthm18}. Using second iterate of Heine's transformation, that is, \eqref{second iterate}, we get
	\begin{align*}
		\lim_{a\to-q^\ell} (1+a)\sum_{m=0}^{\infty} \frac{(q^{-n})_m(aq^n)_m(-q)^m}{(-a)_m(q)_m} 
		&= \frac{(-q^{1-n})_{n}}{(q^{\ell+1})_{n-1}} \sum_{m=0}^{\infty} \frac{(q^{-n})_m}{(-q^{1-n})_m} q^{nm+\ell m}\nonumber\\
		&= \frac{(q)_{\ell}(1+q^n)(-1)^{n+\ell-1}q^{\ell n+(n(n-1)/2)}}{(-q^n)_{\ell}} \sum_{m=0}^{n+\ell-1} \frac{(-1)_m}{(q)_m} (-q^{-n})^m,
	\end{align*}
	where, in the last step, we used \cite[Equation (2.15)]{AndPTF} with $a=q^{-n},b=-q^{1-n},N=n+\ell-1$. 
	Now, let $a\to-q^\ell$ in (\ref{appliedthm18}) and use the above limit to arrive at \eqref{theoremgsigma1}.
\end{proof}
\begin{theorem}\label{lm2}
	\begin{align*}
		\sigma_{2,\ell}(q)=\frac{1}{(-q)_{\infty}} \bigg\{(-q)_{\ell}+\sum_{n=1}^{\infty} (-1)^nq^{\frac{n(5n-1)}{2}+2\ell n}(1+q^{\ell+2n})(-q^{n+1})_{\ell-1} \sum_{m=0}^{n} \frac{(-1)_m}{(q)_m} (-q^{1-n-\ell})^m \bigg\}.
	\end{align*}
\end{theorem}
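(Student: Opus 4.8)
The plan is to obtain Theorem~\ref{lm2} directly from Theorem~\ref{lm1} by rewriting the inner finite sum there by means of the generalized Fine-type identity of Theorem~\ref{theoremgfinite1}; this is the natural analogue of the way \eqref{theoremfinite1} lets one pass between representations (i) and (ii) in Theorem~\ref{hecke-type}, and it sidesteps the need for a parametrized version of Liu's identity \cite[Theorem~1.9]{Liu}. Observe first that the right-hand side of Theorem~\ref{lm1} and that of the target identity carry the same leading term $(-q)_\ell$ inside the braces, and their $n$-th summands share the factors $q^{n(5n-1)/2+2\ell n}$ and $(1+q^{\ell+2n})$; so it suffices to match, term by term, the remaining Pochhammer factors, the $(-1)$-powers, and the inner sums.

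First I would recast the inner sum in Theorem~\ref{lm1}, namely $\sum_{m=0}^{n+\ell-1}\frac{(-1)_m}{(q)_m}(-q^{-n})^m$, as $\sum_{m=0}^{n+\ell-1}\frac{(-1)_m(-1)^mq^{-mn}}{(q)_m}$, and apply \eqref{general ell finite identity} with $\ell$ replaced by $\ell-1$. Solving that identity for this sum gives
\begin{align*}
\sum_{m=0}^{n+\ell-1}\frac{(-1)_m(-1)^mq^{-mn}}{(q)_m}=(-1)^{\ell-1}\frac{(-q^{n+1})_{\ell-1}}{(q^{n+1})_{\ell-1}}\sum_{j=0}^{n}\frac{(-1)_j(-1)^jq^{-j(n+\ell-1)}}{(q)_j},
\end{align*}
and since $(-1)^jq^{-j(n+\ell-1)}=(-q^{1-n-\ell})^j$, the sum on the right is precisely $\sum_{m=0}^{n}\frac{(-1)_m}{(q)_m}(-q^{1-n-\ell})^m$, which is exactly the inner sum appearing in Theorem~\ref{lm2}.

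Substituting this evaluation back into the $n$-th summand of Theorem~\ref{lm1} is then pure bookkeeping. The factor $(q^{n+1})_{\ell-1}$ cancels, leaving $(-q^{n+1})_{\ell-1}$ as in Theorem~\ref{lm2}; the sign powers combine via $(-1)^{n+\ell}(-1)^{\ell-1}=(-1)^{n-1}$, so the overall coefficient $-(-1)^{n+\ell}\cdots$ becomes $+(-1)^{n}\cdots$; and the powers of $q$ together with the factor $1+q^{\ell+2n}$ are untouched. Reinstating $(-q)_\ell$ and dividing by $(-q)_\infty$ yields the stated identity for $\sigma_{2,\ell}(q)$.

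The points that need a little care — but none of which is a genuine obstacle — are: (a) checking that Theorem~\ref{theoremgfinite1} is applied within its stated range, which is all $n,\ell\in\mathbb{Z}$, so the shift $\ell\mapsto\ell-1$ is legitimate, and for $\ell=0$ the identity collapses to \eqref{theoremfinite1}; (b) interpreting $(q^{n+1})_{\ell-1}$ and $(-q^{n+1})_{\ell-1}$ through the negative-subscript convention when $\ell=0$, where they reduce to $(1-q^n)^{-1}$ and $(1+q^n)^{-1}$, recovering Theorem~\ref{hecke-type}(i); and (c) tracking the three $(-1)$-powers carefully. All the analytically substantive work — Andrews' little $q$-Jacobi polynomial identity \cite[Equation~(12.3)]{AndPPI}, Heine's second iterate \eqref{second iterate}, and Andrews' ``incomplete'' summation \cite[Equation~(2.15)]{AndPTF} — was already carried out in proving Theorem~\ref{lm1}, so the present argument requires nothing new of that kind beyond the bridge supplied by Theorem~\ref{theoremgfinite1}.
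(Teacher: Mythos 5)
Your argument is correct, but it is not the proof the paper gives: the paper proves Theorem \ref{lm2} in one stroke by specializing Liu's expansion \cite[Theorem 1.9]{Liu} with $\alpha=-q^{\ell}$, $c=-1$ and $a,b\to0$, exactly parallel to how Theorem \ref{hecke-type}(i) was obtained. What you do instead is the alternative route that the paper explicitly flags but never carries out (``We can also derive Theorem \ref{lm2} from Theorem \ref{lm1} by using an identity between two finite sums given in Theorem \ref{theoremgfinite1}''): you apply \eqref{general ell finite identity} with $\ell$ replaced by $\ell-1$, solve for $\sum_{m=0}^{n+\ell-1}\frac{(-1)_m(-1)^mq^{-mn}}{(q)_m}$, note $(-1)^jq^{-j(n+\ell-1)}=(-q^{1-n-\ell})^j$, cancel $(q^{n+1})_{\ell-1}$, and combine the signs $-(-1)^{n+\ell}(-1)^{\ell-1}=(-1)^n$; I checked each of these steps and they are sound, there is no circularity (Theorem \ref{theoremgfinite1} is proved independently via \eqref{finite1} and Lemma \ref{lfinitelemma}), the shift $\ell\mapsto\ell-1$ stays within the range $n,\ell\in\mathbb{Z}$ of Theorem \ref{theoremgfinite1}, and your negative-subscript reading of $(q^{n+1})_{-1}$, $(-q^{n+1})_{-1}$ at $\ell=0$ correctly recovers Theorem \ref{hecke-type}. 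The trade-off: the paper's route is self-contained and independent of Theorem \ref{lm1}, giving the two Hecke-type representations from two different sources (Liu for one, Andrews' little $q$-Jacobi identity for the other), whereas your route makes the formal equivalence of the two representations transparent and avoids a second appeal to Liu's theorem, at the cost of resting on Theorem \ref{lm1} (hence on Andrews' identity, Heine's second iterate and the incomplete summation) together with the finite Fine-type identity.
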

\begin{proof}
	Use \cite[Theorem 1.9]{Liu} with $\alpha=-q^{\ell},c=-1$ and $a,b\to0$ to get the desired result.
\end{proof}
We can also derive Theorem \ref{lm2} from Theorem \ref{lm1} by using an identity between two finite sums given in Theorem \ref{theoremgfinite1}. This identity is proved next. We begin with a lemma first. 

\begin{lemma}\label{lfinitelemma}
	For $n,\ell\in\mathbb{N}$,
	\begin{align}\label{lfinitelemma eqn}
		\sum_{j=n+1}^{n+\ell} \frac{(-q)_{j-1}(-1)_j(1+q^{2j})q^{-j^2}}{(q)_j^2} = &\frac{(-1)^{n+\ell}(-q)_{n+\ell}}{(q)_{n+\ell}} \sum_{j=n+1}^{n+\ell} \frac{(-1)_j(-1)^jq^{-j(n+\ell)}}{(q)_j}\nonumber\\ &\quad+\frac{(-1)^n(-q)_n}{(q)_n} \sum_{j=n+1}^{n+\ell} \frac{(-1)_j(-1)^jq^{-jn}}{(q)_j}.
	\end{align}
\end{lemma}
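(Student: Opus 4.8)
The plan is to prove \eqref{lfinitelemma eqn} by induction on $\ell$, keeping $n\in\mathbb{N}$ fixed. For brevity set $t_j:=(-q)_{j-1}(-1)_j(1+q^{2j})q^{-j^2}/(q)_j^2$, so that the left-hand side of \eqref{lfinitelemma eqn} is $\sum_{j=n+1}^{n+\ell}t_j$; also write $c_N:=(-1)^N(-q)_N/(q)_N$ and $u_j^{(x)}:=(-1)_j(-1)^jq^{-jx}/(q)_j$, so that the right-hand side of \eqref{lfinitelemma eqn} equals $c_{n+\ell}\sum_{j=n+1}^{n+\ell}u_j^{(n+\ell)}+c_n\sum_{j=n+1}^{n+\ell}u_j^{(n)}$. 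With this notation, \eqref{finite1} (after the replacement $n\mapsto N+1$ and the remark $q^{j-j(N+1)}=q^{-jN}$) reads, for $N\geq1$,
\begin{align*}
1+\sum_{j=1}^{N}t_j=c_N\sum_{j=0}^{N}u_j^{(N)};
\end{align*}
call this $(\star_N)$. Subtracting $(\star_{N-1})$ from $(\star_N)$ yields the useful formula $t_N=c_N\sum_{j=0}^{N}u_j^{(N)}-c_{N-1}\sum_{j=0}^{N-1}u_j^{(N-1)}$.

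For the base case $\ell=1$, both sides of \eqref{lfinitelemma eqn} are single terms; after dividing through by $(-1)_{n+1}q^{-(n+1)^2}$ and using $(-q)_{n+1}=(1+q^{n+1})(-q)_n$ and $(q)_{n+1}=(1-q^{n+1})(q)_n$, the asserted equality reduces to the obvious identity $1+q^{2n+2}=(1+q^{n+1})-q^{n+1}(1-q^{n+1})$.

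For the inductive step, assume \eqref{lfinitelemma eqn} for $\ell$. Since the left-hand side for $\ell+1$ is that for $\ell$ plus the single term $t_{n+\ell+1}$, the $\ell+1$ case of \eqref{lfinitelemma eqn} will follow once we show that
\begin{align*}
t_{n+\ell+1}=c_{n+\ell+1}\sum_{j=n+1}^{n+\ell+1}u_j^{(n+\ell+1)}-c_{n+\ell}\sum_{j=n+1}^{n+\ell}u_j^{(n+\ell)}+c_n\,u_{n+\ell+1}^{(n)}.
\end{align*}
Here I would substitute for $t_{n+\ell+1}$ the expression $c_{n+\ell+1}\sum_{j=0}^{n+\ell+1}u_j^{(n+\ell+1)}-c_{n+\ell}\sum_{j=0}^{n+\ell}u_j^{(n+\ell)}$ (the formula for $t_N$ from the first paragraph, with $N=n+\ell+1$), and then split each complete sum as $\sum_{j=0}^{n}+\sum_{j=n+1}^{\bullet}$. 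The two tail sums $\sum_{j=n+1}^{\bullet}$ cancel against those already present, and the entire inductive step collapses to the single short-range identity (with $m:=n+\ell$)
\begin{align*}
c_{m+1}\sum_{j=0}^{n}u_j^{(m+1)}-c_{m}\sum_{j=0}^{n}u_j^{(m)}=c_n\,u_{m+1}^{(n)}.
\end{align*}

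It remains to prove this identity, which I would do by telescoping in $j$. Set $v_j:=(-1)^j(-1)_jq^{-j(m+1)}/(q)_{j-1}$. Using $(-1)_{j+1}=(1+q^{j})(-1)_j$, $(q)_{j+1}=(1-q^{j+1})(q)_j$ and $(1-q^{j})/(q)_j=1/(q)_{j-1}$, a direct computation gives, for $1\leq j\leq n$,
\begin{align*}
c_{m+1}u_j^{(m+1)}-c_mu_j^{(m)}=\frac{(-1)^m(-q)_mq^{m+1}}{(q)_m(1-q^{m+1})}\,\bigl(v_{j+1}-v_j\bigr),
\end{align*}
whereas the $j=0$ term is $c_{m+1}-c_m=-2(-1)^m(-q)_m/\bigl((q)_m(1-q^{m+1})\bigr)$. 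Since $v_1=-2q^{-(m+1)}$, this $j=0$ term exactly cancels the $-v_1$ boundary piece of the telescoped sum, so the left side of the short-range identity reduces to $(-1)^m(-q)_mq^{m+1}v_{n+1}/\bigl((q)_m(1-q^{m+1})\bigr)$, and a final simplification with $(-1)_{n+1}=2(-q)_n$ and $(-1)_{m+1}=2(-q)_m$ shows that this is precisely $c_n u_{m+1}^{(n)}$. The only genuine obstacle is conceptual: the two inner sums in \eqref{lfinitelemma eqn} carry different exponent parameters at consecutive stages of the induction, so the $\ell\mapsto\ell+1$ difference does not telescope by itself; the point of invoking $(\star)$ is to trade the single new summand $t_{n+\ell+1}$ for a difference of two complete Fine-type sums, after which only the short telescoping identity above is left to verify.
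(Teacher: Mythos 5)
Your proof is correct, and I verified its three pressure points: the base case $\ell=1$ does reduce to $1+q^{2n+2}=(1+q^{n+1})-q^{n+1}(1-q^{n+1})$; the inductive step, after substituting your formula for $t_{n+\ell+1}$ and cancelling the tail sums, is exactly the short-range identity $c_{m+1}\sum_{j=0}^{n}u_j^{(m+1)}-c_m\sum_{j=0}^{n}u_j^{(m)}=c_n\,u_{m+1}^{(n)}$; and the telescoping computation with $v_j$ checks out, including the cancellation of the $j=0$ term against the $-v_1$ boundary piece. However, your route differs from the paper's in the mechanics of the induction. The paper also inducts on $\ell$, but its inductive step is entirely self-contained: writing $A(q;\ell)$, $B(q;\ell)$, $C(q;\ell)$ for the three sums in \eqref{lfinitelemma eqn}, it computes $B(q;\ell+1)-B(q;\ell)$ directly by splitting off the $j=n+\ell+1$ term and recombining the two index-shifted sums, arriving at $B(q;\ell+1)-B(q;\ell)=A(q;\ell+1)-A(q;\ell)+C(q;\ell)-C(q;\ell+1)$ by elementary manipulation of $q$-factorials, with \eqref{finite1} never entering. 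You instead import \eqref{finite1} (a specialization of Andrews' identity) to trade the single new summand $t_{n+\ell+1}$ for a difference of two complete Fine-type sums, and finish by telescoping. This is logically sound --- \eqref{finite1} is established independently of the lemma, so there is no circularity --- and your inductive step is arguably cleaner than the paper's chain of rearrangements. The trade-off is architectural: the paper keeps the lemma independent of \eqref{finite1}, which is then invoked only in the proof of Theorem \ref{theoremgfinite1}; your version makes the lemma a consequence of \eqref{finite1}, and in fact your short-range identity, summed over $m=n,\dots,n+\ell-1$, telescopes directly to $c_{n+\ell}\sum_{j=0}^{n}u_j^{(n+\ell)}=c_n\sum_{j=0}^{n+\ell}u_j^{(n)}$, which is the symmetric form of Theorem \ref{theoremgfinite1} itself --- so your method could bypass the lemma altogether, whereas the paper's division of labor needs the lemma as a separate, purely elementary ingredient.
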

\begin{proof}
	Let $A(q;\ell)$ denote the expression on the left-hand side of \eqref{lfinitelemma eqn}, and let $B(q;\ell)$  and $C(q;\ell)$ denote the first and the second expressions on the right-hand side respectively. We proceed by induction on $\ell$. 
	
It is trivial to see that the result holds for $\ell=1$. Assume that it is true for a general $\ell$, that is,
	\begin{align}\label{indHyp}
		A(q;\ell)=B(q;\ell)+C(q;\ell).
	\end{align}
	Our aim is to prove
	\begin{align}\label{indRes}
		A(q;\ell+1)=B(q;\ell+1)+C(q;\ell+1).
	\end{align}
We have
	\begin{align}\label{Aextra}
		A(q;\ell+1)=A(q;\ell)+\frac{(-q)_{n+\ell}(-1)_{n+\ell+1}(1+q^{2n+2\ell+2})q^{-(n+\ell+1)^2}}{(q)_{n+\ell+1}^2},
	\end{align}
	and
	\begin{align}\label{Cextra}
		C(q;\ell+1)=C(q;\ell)+\frac{(-1)^{\ell+1}(-q)_n(-1)_{n+\ell+1}q^{-n(n+\ell+1)}}{(q)_n(q)_{n+\ell+1}}.
	\end{align}
	Separating out the term corresponding to $j=n+\ell+1$ in $B(q;\ell+1)$, we see that
	\begin{align*}
		&B(q;\ell+1)-B(q;\ell)\\=&\frac{(-q)_{n+\ell+1}(-1)_{n+\ell+1}}{q^{(n+\ell+1)^2}(q)_{n+\ell+1}^2}+\frac{(-1)^{n+\ell+1}(-q)_{n+\ell}}{(q)_{n+\ell}} \sum_{j=n+1}^{n+\ell} \frac{(-1)_j(-1)^jq^{-j(n+\ell+1)}}{(q)_j}\bigg\{ \frac{(1+q^{n+\ell+1})}{(1-q^{n+\ell+1})}+q^j \bigg\}\\
		=&\frac{(-q)_{n+\ell+1}(-1)_{n+\ell+1}q^{-(n+\ell+1)^2}}{(q)_{n+\ell+1}^2}+\frac{(-1)^{n+\ell+1}(-q)_{n+\ell}}{(q)_{n+\ell+1}} \sum_{j=n+1}^{n+\ell} \frac{(-1)_{j+1}(-1)^jq^{-j(n+\ell+1)}}{(q)_j}\\&+\frac{(-1)^{n+\ell+1}(-q)_{n+\ell}}{(q)_{n+\ell+1}} \sum_{j=n+1}^{n+\ell} \frac{(-1)_j(-1)^jq^{-(j-1)(n+\ell+1)}}{(q)_{j-1}}\\
		=&\frac{(-q)_{n+\ell+1}(-1)_{n+\ell+1}}{q^{(n+\ell+1)^2}(q)_{n+\ell+1}^2}-\frac{(-q)_{n+\ell}(-1)_{n+\ell+1}q^{-(n+\ell)(n+\ell+1)}}{(q)_{n+\ell+1}(q)_{n+\ell}}+\frac{(-1)^{\ell}(-q)_{n+\ell}(-1)_{n+1}q^{-n(n+\ell+1)}}{(q)_{n+\ell+1}(q)_n}\\&+\frac{(-1)^{n+\ell+1}(-q)_{n+\ell}}{(q)_{n+\ell+1}}\bigg\{ \sum_{j=n+1}^{n+\ell-1} \frac{(-1)_{j+1}(-1)^jq^{-j(n+\ell+1)}}{(q)_j}+\sum_{j=n+2}^{n+\ell} \frac{(-1)_j(-1)^jq^{-(j-1)(n+\ell+1)}}{(q)_{j-1}} \bigg\}\\
		=&\frac{(-q)_{n+\ell}(-1)_{n+\ell+1}q^{-(n+\ell+1)^2}}{(q)_{n+\ell+1}(q)_{n+\ell}}\bigg\{\frac{(1+q^{n+\ell+1})}{(1-q^{n+\ell+1})}-q^{n+\ell+1}\bigg\}+\frac{(-1)^{\ell}(-1)_{n+\ell+1}(-q)_nq^{-n(n+\ell+1)}}{(q)_{n+\ell+1}(q)_n}\\=&\frac{(-q)_{n+\ell}(-1)_{n+\ell+1}(1+q^{2n+2\ell+2})q^{-(n+\ell+1)^2}}{(q)_{n+\ell+1}^2}+\frac{(-1)^{\ell}(-1)_{n+\ell+1}(-q)_nq^{-n(n+\ell+1)}}{(q)_{n+\ell+1}(q)_n}.
	\end{align*}
	Using \eqref{Aextra}, \eqref{Cextra} and the above equation, we see that
	\begin{align*}
		B(q;\ell+1)-B(q;\ell)=A(q;\ell+1)-A(q;\ell)+C(q;\ell)-C(q;\ell+1).
	\end{align*}
	An application of the induction hypothesis \eqref{indHyp} now leads to \eqref{indRes}.
	\end{proof}

\begin{proof}[Theorem \textup{\ref{theoremgfinite1}}][]
It is easy to see that the identity is trivial for $n<0$. So let $n\geq0$, and first assume $\ell\geq1$.  
Replace $n$ by $n+1$ in \eqref{finite1}, then add $\sum_{j=n+1}^{n+\ell}\frac{(-q)_{j-1}(1+q^{2j})(-1)_jq^{-j^2}}{(q)_j^2}$ to both sides of the resulting identity to obtain
\begin{align}\label{lfinite2}
	&1+\sum_{j=1}^{n+\ell} \frac{(-q)_{j-1}(1+q^{2j})(-1)_jq^{-j^2}}{(q)_j^2}\nonumber\\ &=\sum_{j=n+1}^{n+\ell} \frac{(-q)_{j-1}(1+q^{2j})(-1)_jq^{-j^2}}{(q)_j^2}+\frac{(-1)^n(-q)_n}{(q)_n} \sum_{j=0}^{n} \frac{(-1)_j(-1)^jq^{-jn}}{(q)_j}\nonumber\\ &=\sum_{j=n+1}^{n+\ell} \frac{(-q)_{j-1}(1+q^{2j})(-1)_jq^{-j^2}}{(q)_j^2}-\frac{(-1)^n(-q)_n}{(q)_n} \sum_{j=n+1}^{n+\ell} \frac{(-1)_j(-1)^jq^{-jn}}{(q)_j}\nonumber\\&\quad+\frac{(-1)^n(-q)_n}{(q)_n} \sum_{j=0}^{n+\ell} \frac{(-1)_j(-1)^jq^{-jn}}{(q)_j}.
\end{align}
Now replace $n$ by $n+\ell+1$ in (\ref{finite1}) to get
\begin{align}\label{lfinite3}
	&1+\sum_{j=1}^{n+\ell} \frac{(-q)_{j-1}(1+q^{2j})(-1)_jq^{-j^2}}{(q)_j^2}=\frac{(-1)^{n+\ell}(-q)_{n+\ell}}{(q)_{n+\ell}} \sum_{j=0}^{{n+\ell}} \frac{(-1)_j(-1)^jq^{-j(n+\ell)}}{(q)_j}\nonumber\\
	&=\frac{(-1)^{n+\ell}(-q)_{n+\ell}}{(q)_{n+\ell}} \sum_{j=n+1}^{{n+\ell}} \frac{(-1)_j(-1)^jq^{-j(n+\ell)}}{(q)_j}+\frac{(-1)^{n+\ell}(-q)_{n+\ell}}{(q)_{n+\ell}} \sum_{j=0}^{n} \frac{(-1)_j(-1)^jq^{-j(n+\ell)}}{(q)_j}.
\end{align}
Now compare (\ref{lfinite2}) and (\ref{lfinite3}), then use Lemma \ref{lfinitelemma} to get the required result for $\ell\geq1$.

We now prove the identity for the remaining values of $\ell$. Observe that the case $\ell=0$ is trivial. 

Now let $-n\leq\ell\leq-1$. We let $\ell=-m$, where $1\leq m\leq n$. Then the identity to be proved is
 \begin{align*}
 	\sum_{j=0}^{n} \frac{(-1)_j(-1)^jq^{-j(n-m)}}{(q)_j}=(-1)^{\ell}\frac{(q^{n+1})_{-m}}{(-q^{n+1})_{-m}} \sum_{j=0}^{n-m} \frac{(-1)_j(-1)^jq^{-jn}}{(q)_j} .
 \end{align*}
Now replace $n$ by $n+m$ and use the fact that $\frac{(q^{n+m+1})_{-m}}{(-q^{n+m+1})_{-m}}=\frac{(-q^{n+1})_m}{(q^{n+1})_m}$
 to see that what one needs to prove is nothing but \eqref{general ell finite identity} for $\ell=m\geq1$. But this is already proved above.
 
 Finally, for $\ell<-n$, the identity is seen to be true once we multiply both sides by $\frac{(-q^{n+1})_{\ell}}{(q^{n+1})_{\ell}}$ and observe that the resulting identity is trivial.
\end{proof}

\section{A congruence involving $b(n)$, $b_{1}(n)$ and coefficients of tenth order mock theta functions}\label{tenth}

A generalized modular relation found on page $27$ of Ramanujan's Lost Notebook \cite{lnb} states that for $a\in\mathbb{C}\backslash\{0\}$, and $b\in\mathbb{C}$,
\begin{align}\label{theta_rrsum}
	&\sum_{m=0}^{\infty}\frac{a^{-2m}q^{m^2}}{(bq)_m}\sum_{n=0}^{\infty}\frac{a^nb^nq^{{n^2}/4}}{(q)_n}+\sum_{m=0}^{\infty}\frac{a^{-2m-1}q^{m^2+m}}{(bq)_m}\sum_{n=0}^{\infty}\frac{a^nb^nq^{{(n+1)^2}/4}}{(q)_n}\nonumber\\
	&=\frac{1}{(bq)_{\infty}}\sum_{n=-\infty}^{\infty}a^nq^{n^2/4}-(1-b)\sum_{n=1}^{\infty}a^nq^{n^2/4}\sum_{\ell=0}^{n-1}\frac{b^\ell}{(q)_\ell}.
\end{align}
Andrews \cite{yesto} obtained the first proof of the above identity. The first and the second authors \cite{adixit gkumar} have recently generalized Ramanujan's identity using an extra parameter $s\in\mathbb{N}$, and have shown that their generalization reduces to \eqref{theta_rrsum} for $s=2$.

In what follows, we show that using Ramanujan's identity \eqref{theta_rrsum}, we can obtain a relation between the generating functions of $b_\ell(n)$ and $b_{\ell+1}(n)$ for any $\ell\in\mathbb{N}$.
\begin{theorem}
	For $\ell\in\mathbb{N}\cup\{0\}$,
	\begin{align}\label{6.2}
		&\sum_{m=0}^{\infty}\frac{(-1)^mq^{m^2+m\ell}}{(-q)_m}\sum_{n=0}^{\infty}\frac{(-1)^nq^{n^2-n\ell}}{(q)_{2n}}-\sum_{m=0}^{\infty}\frac{(-1)^mq^{m^2+m+m\ell}}{(-q)_m}\sum_{n=0}^{\infty}\frac{(-1)^nq^{(n+1)^2-n\ell}}{(q)_{2n+1}}\nonumber\\
		&=\frac{1}{(-q)_{\infty}}\sum_{n=-\infty}^{\infty}(-1)^nq^{n^2-n\ell}-2\sum_{n=1}^{\infty}(-1)^nq^{n^2-n\ell}\sum_{k=0}^{2n-1}\frac{(-1)^k}{(q)_k}.
	\end{align}
	\begin{align}\label{6.3}
		&\sum_{m=0}^{\infty}\frac{(-1)^mq^{m^2+m\ell}}{(-q)_m}\sum_{n=0}^{\infty}\frac{(-1)^nq^{n^2+n-n\ell}}{(q)_{2n+1}}+\sum_{m=0}^{\infty}\frac{(-1)^mq^{m^2+m+m\ell}}{(-q)_m}\sum_{n=0}^{\infty}\frac{(-1)^nq^{n^2+n-n\ell+\ell/2}}{(q)_{2n}}\nonumber\\
		&=\frac{-1}{(-q)_{\infty}}\sum_{n=-\infty}^{\infty}(-1)^nq^{n^2+n-n\ell}+2\sum_{n=0}^{\infty}(-1)^nq^{n^2+n-n\ell}\sum_{k=0}^{2n}\frac{(-1)^k}{(q)_k}.
	\end{align}
\end{theorem}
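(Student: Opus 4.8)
The plan is to derive both \eqref{6.2} and \eqref{6.3} from Ramanujan's modular relation \eqref{theta_rrsum} by a single specialization combined with a decomposition into parts even and odd in the parameter $a$. First I would set $b=-1$ in \eqref{theta_rrsum}, so that every $b^{n}$ becomes $(-1)^{n}$ and $(bq)_{\infty}=(-q)_{\infty}$, $(bq)_{m}=(-q)_{m}$. Since \eqref{theta_rrsum} holds for all $a\in\mathbb{C}\setminus\{0\}$, it also holds with $a$ replaced by $-a$; adding and subtracting the two versions splits the identity into its even part and its odd part in $a$. In the even part every surviving term has even total $a$-degree: in the first double sum this forces the inner summation index $n$ to be even (write $n=2k$), while in the second double sum, whose exponent is $(n+1)^{2}/4$, it forces $n$ to be odd (write $n=2k+1$); on the right-hand side the theta series and the tail sum keep their terms with $n$ even ($n=2k$). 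A short check shows that after these substitutions every term is a genuine power series in $q$ (all exponents integral) and depends on $a$ only through $a^{2}$.

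At this point $a^{2}$ may be regarded as a free parameter, and I would substitute $a^{2}=-q^{-\ell}$ (equivalently $a=iq^{-\ell/2}$, legitimate in $\mathbb{C}((q^{1/4}))$). Using $(-q^{-\ell})^{-m}=(-1)^{m}q^{\ell m}$, $(-q^{-\ell})^{k}=(-1)^{k}q^{-\ell k}$ and $b^{2k+1}=-1$, the first double sum of the even part becomes $\sum_{m}\frac{(-1)^{m}q^{m^{2}+m\ell}}{(-q)_{m}}\sum_{k}\frac{(-1)^{k}q^{k^{2}-k\ell}}{(q)_{2k}}$, the second becomes $-\sum_{m}\frac{(-1)^{m}q^{m^{2}+m+m\ell}}{(-q)_{m}}\sum_{k}\frac{(-1)^{k}q^{(k+1)^{2}-k\ell}}{(q)_{2k+1}}$, the theta term becomes $\frac{1}{(-q)_{\infty}}\sum_{k}(-1)^{k}q^{k^{2}-k\ell}$, and $1-b=2$ produces $-2\sum_{k\ge1}(-1)^{k}q^{k^{2}-k\ell}\sum_{j=0}^{2k-1}\frac{(-1)^{j}}{(q)_{j}}$; this is exactly \eqref{6.2}.

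For \eqref{6.3} the argument is the mirror image carried out on the odd part of \eqref{theta_rrsum}: the parities are reversed ($n$ odd in the first double sum and in the theta and tail, $n$ even in the second double sum), every surviving term now carries a common factor $q^{1/4}$ arising from $(2k+1)^{2}/4=k^{2}+k+\tfrac14$, and there is a single leftover factor $a$ which cancels from the two sides, so the identity again depends on $a$ only through $a^{2}$. Substituting $a^{2}=-q^{-\ell}$, cancelling the global $q^{1/4}$, and multiplying through by $-1$ produces \eqref{6.3}.

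The work is routine but the bookkeeping is the delicate point: four objects (two double sums, the theta series and the tail sum) must be parity-split simultaneously, and one must carefully track the signs coming from $b=-1$ and from $(-q^{-\ell})^{\pm m}$, together with the quarter-integer exponents of $q$ in the odd-part identity, verifying that they all combine into the single common factor $q^{1/4}$ so that \eqref{6.3} really is an identity between ordinary $q$-series. No idea beyond \eqref{theta_rrsum} itself and the parity decomposition is required.
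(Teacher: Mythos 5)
Your proposal follows essentially the same route as the paper: the paper substitutes $a=iq^{-\ell/2}$, $b=-1$ directly into \eqref{theta_rrsum} and then separates real and imaginary parts (working first with real $q$ and extending by analytic continuation), which is precisely your even/odd decomposition in $a$ followed by the specialization $a^{2}=-q^{-\ell}$; your formal parity split is marginally cleaner in that it avoids the restriction to real $q$ altogether. Your derivation of \eqref{6.2} is correct and matches the paper's real-part computation term by term.

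For \eqref{6.3}, however, push your own bookkeeping to the end: in the odd part the theta series and the tail contribute the exponent $n^{2}/4-n\ell/2=k^{2}+k+\tfrac14-k\ell-\tfrac{\ell}{2}$ with $n=2k+1$, so after dividing by $aq^{1/4}$ and setting $a^{2}=-q^{-\ell}$ (hence $a^{-2}=-q^{\ell}$) you obtain the second double sum multiplied by $q^{\ell}$, i.e.\ inner exponent $n^{2}+n-n\ell+\ell$, while the first double sum and the right-hand side carry $n^{2}+n-n\ell$. That is \emph{not} \eqref{6.3} as printed, which has $+\ell/2$; for odd $\ell$ the printed \eqref{6.3} would equate a series in integral powers of $q$ to one containing half-integral powers, so it cannot hold as stated (a check of the $q^{\ell}$ version at $\ell=1$ through $q^{5}$ confirms it is the correct one). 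The slip originates in the paper itself — the factor $q^{-\ell/2}$ is dropped from the odd part of the right-hand side in \eqref{RLN_RHS} — not in your method; but your assertion that the computation ``produces \eqref{6.3}'' is exactly where the exponent tracking you yourself flag as delicate bites, and you should either state the corrected identity or note the discrepancy explicitly.
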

\begin{proof}
We prove the results for $-1<q<1$. They can be extended for $|q|<1$ by the principle of analytic continuation.

 Let $a=iq^{-\ell/2}$ and $b=-1$ in \eqref{theta_rrsum}. This gives
		\begin{align}\label{thmRLN2gen}
		&\sum_{m=0}^{\infty}\frac{(-1)^mq^{m^2+m\ell}}{(-q)_m}\sum_{n=0}^{\infty}\frac{(-i)^nq^{{n^2}/4-n\ell/2}}{(q)_n}-iq^{\frac{\ell}{2}}\sum_{m=0}^{\infty}\frac{(-1)^mq^{m^2+m(\ell+1)}}{(-q)_m}\sum_{n=0}^{\infty}\frac{(-i)^nq^{{(n+1)^2}/4-n\ell/2}}{(q)_n}\nonumber\\
		&=\frac{1}{(-q)_{\infty}}\sum_{n=-\infty}^{\infty}i^nq^{n^2/4-n\ell /2}-2\sum_{n=1}^{\infty}i^nq^{n^2/4- n\ell/2}\sum_{k=0}^{n-1}\frac{(-1)^k}{(q)_k}.
	\end{align}
	Now split the two sums over $n$ on the left-hand side based on its parity, and then group the real terms together and imaginary terms together so as to get
	\begin{align}\label{RLN_LHS}
		&\sum_{m=0}^{\infty}\frac{(-1)^mq^{m^2+m\ell}}{(-q)_m}\sum_{n=0}^{\infty}\frac{(-i)^nq^{{n^2}/4-n\ell/2}}{(q)_n}-iq^{\ell/2}\sum_{m=0}^{\infty}\frac{(-1)^mq^{m^2+m+m\ell}}{(-q)_m}\sum_{n=0}^{\infty}\frac{(-i)^nq^{{(n+1)^2/4-n\ell/2}}}{(q)_n}\nonumber\\
		&=\sum_{m=0}^{\infty}\frac{(-1)^mq^{m^2+m\ell}}{(-q)_m}\sum_{n=0}^{\infty}\frac{(-1)^nq^{n^2-n\ell}}{(q)_{2n}}-\sum_{m=0}^{\infty}\frac{(-1)^mq^{m^2+m+m\ell}}{(-q)_m}\sum_{n=0}^{\infty}\frac{(-1)^nq^{(n+1)^2-n\ell}}{(q)_{2n+1}}\nonumber\\&\ -iq^{1/4}\bigg\{\sum_{m=0}^{\infty}\frac{(-1)^mq^{m^2+m\ell}}{(-q)_m}\sum_{n=0}^{\infty}\frac{(-1)^nq^{n^2+n-n\ell-\ell/2}}{(q)_{2n+1}}+q^{\frac{\ell}{2}}\sum_{m=0}^{\infty}\frac{(-1)^mq^{m^2+m+m\ell}}{(-q)_m}\sum_{n=0}^{\infty}\frac{(-1)^nq^{n^2+n-n\ell}}{(q)_{2n}}\bigg\}.
	\end{align}
	Similarly, split the two sums over $n$ on the right-hand side of \eqref{thmRLN2gen} based on its parity to get
		\begin{align}\label{RLN_RHS}
		&\frac{1}{(-q)_{\infty}}\sum_{n=-\infty}^{\infty}i^nq^{n^2/4-n\ell /2}-2\sum_{n=1}^{\infty}i^nq^{n^2/4- n\ell/2}\sum_{k=0}^{n-1}\frac{(-1)^k}{(q)_k}\nonumber\\
		&=\frac{1}{(-q)_{\infty}}\sum_{n=-\infty}^{\infty}(-1)^nq^{n^2-n\ell}-2\sum_{n=1}^{\infty}(-1)^nq^{n^2-n\ell}\sum_{k=0}^{2n-1}\frac{(-1)^k}{(q)_k}\nonumber\\
		& -iq^{1/4}\bigg\{\frac{-1}{(-q)_{\infty}}\sum_{n=-\infty}^{\infty}(-1)^nq^{n^2+n-n\ell}+2\sum_{n=0}^{\infty}(-1)^nq^{n^2+n-n\ell}\sum_{k=0}^{2n}\frac{(-1)^k}{(q)_k}\bigg\}.
	\end{align}
Now substitute the left-hand side of \eqref{thmRLN2gen} by the right-hand side of \eqref{RLN_LHS}, and likewise, the right-hand side of \eqref{thmRLN2gen} by the right-hand side of \eqref{RLN_RHS}.  The identities in \eqref{6.2} and \eqref{6.3} then follow upon equating the real and imaginary parts on both sides of the resulting identity. 
	\end{proof}
\begin{corollary}
	\begin{align}\label{1st}
		&\sum_{m=0}^{\infty}\frac{(-1)^mq^{m^2}}{(-q)_m}\sum_{n=0}^{\infty}\frac{(-1)^nq^{n^2}}{(q)_{2n}}-\sum_{m=0}^{\infty}\frac{(-1)^mq^{m^2+m}}{(-q)_m}\sum_{n=0}^{\infty}\frac{(-1)^nq^{(n+1)^2}}{(q)_{2n+1}}\nonumber\\
		&=\frac{1}{(-q)_{\infty}}\sum_{n=-\infty}^{\infty}(-1)^nq^{n^2}-2\sum_{n=1}^{\infty}(-1)^nq^{n^2}\sum_{k=0}^{2n-1}\frac{(-1)^k}{(q)_k}.
	\end{align}
	\begin{align*}
		&\sum_{m=0}^{\infty}\frac{(-1)^mq^{m^2}}{(-q)_m}\sum_{n=0}^{\infty}\frac{(-1)^nq^{n^2+n}}{(q)_{2n+1}}+\sum_{m=0}^{\infty}\frac{(-1)^mq^{m^2+m}}{(-q)_m}\sum_{n=0}^{\infty}\frac{(-1)^nq^{n^2+n}}{(q)_{2n}}\nonumber\\
		&=\frac{-1}{(-q)_{\infty}}\sum_{n=-\infty}^{\infty}(-1)^nq^{n^2+n}+2\sum_{n=0}^{\infty}(-1)^nq^{n^2+n}\sum_{k=0}^{2n}\frac{(-1)^k}{(q)_k}.
	\end{align*}
\end{corollary}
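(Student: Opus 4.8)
The plan is simply to specialize the parameter $\ell$ to $0$ in the two identities \eqref{6.2} and \eqref{6.3} of the preceding theorem. Setting $\ell=0$ in \eqref{6.2} removes every factor of the form $q^{m\ell}$ and $q^{n\ell}$, so the left-hand side collapses to $\sum_{m}\frac{(-1)^mq^{m^2}}{(-q)_m}\sum_{n}\frac{(-1)^nq^{n^2}}{(q)_{2n}}-\sum_{m}\frac{(-1)^mq^{m^2+m}}{(-q)_m}\sum_{n}\frac{(-1)^nq^{(n+1)^2}}{(q)_{2n+1}}$ and the right-hand side to $\frac{1}{(-q)_{\infty}}\sum_{n\in\mathbb{Z}}(-1)^nq^{n^2}-2\sum_{n\geq1}(-1)^nq^{n^2}\sum_{k=0}^{2n-1}\frac{(-1)^k}{(q)_k}$, which is precisely the first assertion \eqref{1st}.

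For the second assertion I would set $\ell=0$ in \eqref{6.3}. The only point that requires a moment's attention is the half-integer exponent: the summand $q^{n^2+n-n\ell+\ell/2}$ in the second double sum on the left carries a factor $q^{\ell/2}$, but for $\ell=0$ this factor equals $1$, so no fractional powers of $q$ survive and the identity holds as stated for $|q|<1$. With $\ell=0$ the remaining terms of \eqref{6.3} reduce term-by-term to exactly the second displayed identity of the corollary.

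Alternatively, and in the same spirit as the proof of the theorem, one could obtain the corollary directly from Ramanujan's identity \eqref{theta_rrsum} by taking $a=i$ and $b=-1$, splitting the two inner series over $n$ according to the parity of $n$, collecting the real and the imaginary parts, and equating them on both sides: the real parts produce \eqref{1st} and the imaginary parts produce the second identity. There is no substantive obstacle here; the only care needed is that the choice $a=i$ yields genuinely convergent series for $-1<q<1$ (as in the theorem), after which the principle of analytic continuation extends both identities to all $|q|<1$.
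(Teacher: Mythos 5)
Your proposal is correct and coincides with the paper's own proof, which likewise obtains the corollary by setting $\ell=0$ in \eqref{6.2} and \eqref{6.3}; the observation that the $q^{\ell/2}$ factor is harmless at $\ell=0$ is the only point needing care, and you handle it. The alternative route you sketch (taking $a=i$, $b=-1$ directly in \eqref{theta_rrsum} and separating real and imaginary parts) is just the $\ell=0$ specialization of the theorem's own derivation, so it adds nothing essentially new.
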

\begin{proof}
Let $\ell=0$ in \eqref{6.2} and \eqref{6.3}.
\end{proof}
\begin{remark}
	The sums $\sum_{n=0}^{\infty}\frac{(-1)^nq^{n^2}}{(q)_{2n}}$ and $\sum_{n=0}^{\infty}\frac{(-1)^nq^{(n+1)^2}}{(q)_{2n+1}}$ occurring on the left-hand side of \eqref{1st} remind us of the tenth order mock theta functions $X(q):=\sum_{n=0}^{\infty}\frac{(-1)^nq^{n^2}}{(-q)_{2n}}$ and  $\chi(q):=\sum_{n=0}^{\infty}\frac{(-1)^nq^{(n+1)^2}}{(-q)_{2n+1}}$.
\end{remark}
Indeed, the observation in the above remark helps us to obtain the following result.
\begin{theorem}
Define
	\begin{align*}
	X(q)&:= \sum_{n=1}^{\infty}\frac{(-1)^{n}q^{n^2}}{(-q)_{2n}}=: \sum_{m=0}^{\infty}c_{X}(m)q^{m},\\
	\chi(q)&:= \sum_{n=1}^{\infty}\frac{(-1)^{n}q^{(n+1)^2}}{(-q)_{2n+1}}=:\sum_{m=0}^{\infty}c_{\chi}(m)q^{m}.
	\end{align*}
	Then, for $k \in \mathbb{N}$,
\begin{align*}
	\sum_{n=0}^{k}b(n)c_X(k-n)+b_1(n)c_{\chi}(k-n) \equiv0\pmod{2}.
	\end{align*}
\end{theorem}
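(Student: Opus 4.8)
The plan is to read the congruence directly off the first displayed identity of the preceding Corollary, equation \eqref{1st}, after reducing it modulo $2$. I would begin by multiplying both sides of \eqref{1st} by $(-q)_\infty$. By Theorem \ref{theorem main} one has $(-q)_\infty\sigma_2(q)=(-q)_\infty\sigma_{2,0}(q)=\sum_{n=0}^\infty b(n)q^n$, and by Theorem \ref{main theorem general ell} with $\ell=1$ one has $(-q)_\infty\sigma_{2,1}(q)=\sum_{n=0}^\infty b_1(n)q^n$; after these substitutions \eqref{1st} reads
\begin{align*}
\Bigl(\sum_{n=0}^\infty b(n)q^n\Bigr)\sum_{n=0}^\infty\frac{(-1)^nq^{n^2}}{(q)_{2n}}&-\Bigl(\sum_{n=0}^\infty b_1(n)q^n\Bigr)\sum_{n=0}^\infty\frac{(-1)^nq^{(n+1)^2}}{(q)_{2n+1}}\\
&=\sum_{n=-\infty}^\infty(-1)^nq^{n^2}-2(-q)_\infty\sum_{n=1}^\infty(-1)^nq^{n^2}\sum_{k=0}^{2n-1}\frac{(-1)^k}{(q)_k}.
\end{align*}

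Next I would reduce this identity modulo $2$. On the right-hand side the last term carries an explicit factor $2$ and so disappears, while $\sum_{n=-\infty}^\infty(-1)^nq^{n^2}=1+2\sum_{n=1}^\infty(-1)^nq^{n^2}\equiv1\pmod2$, so the right-hand side is $\equiv1\pmod2$. On the left-hand side the minus sign becomes a plus, and in each Pochhammer denominator one may replace $(q;q)_m$ by $(-q;q)_m$: indeed $(q;q)_m\equiv(-q;q)_m\pmod2$ as polynomials, both have constant term $1$, hence $1/(q)_m\equiv1/(-q)_m\pmod2$ as power series, and since for any fixed power of $q$ only finitely many $n$ contribute to each inner sum we obtain $\sum_{n\ge0}(-1)^nq^{n^2}/(q)_{2n}\equiv X(q)$ and $\sum_{n\ge0}(-1)^nq^{(n+1)^2}/(q)_{2n+1}\equiv\chi(q)\pmod2$, with $X$ and $\chi$ as in the Remark preceding the theorem. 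This gives
\begin{align*}
\Bigl(\sum_{n=0}^\infty b(n)q^n\Bigr)X(q)+\Bigl(\sum_{n=0}^\infty b_1(n)q^n\Bigr)\chi(q)\equiv1\pmod2 .
\end{align*}

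To finish I would compare coefficients of $q^k$. The left-hand side above equals $\sum_{k\ge0}\bigl(\sum_{n=0}^{k}b(n)c_X(k-n)+b_1(n)c_\chi(k-n)\bigr)q^k$ (the inner sum truncates at $n=k$ since $c_X(m)$ and $c_\chi(m)$ multiply $q^m$), so for every $k\ge1$ the bracketed sum is $\equiv0\pmod2$, which is exactly the assertion; the coefficient of $q^0$ accounts for the surviving constant $1$, which is why the statement is restricted to $k\in\mathbb N$. I do not anticipate any real obstacle: the whole argument is powered by the already-established identity \eqref{1st}. The two points that require a little care are the replacement of $(q;q)_m$ by $(-q;q)_m$ inside reciprocals modulo $2$, and checking that the right-hand side of \eqref{1st} genuinely collapses to $1$ after clearing $(-q)_\infty$ — that is, that $2(-q)_\infty\sum_{n\ge1}(-1)^nq^{n^2}\sum_{k}(-1)^k/(q)_k$ is plainly even and that $\sum_{n\in\mathbb Z}(-1)^nq^{n^2}$ is even away from its constant term.
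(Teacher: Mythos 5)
Your proposal is correct and follows essentially the same route as the paper's proof: multiply \eqref{1st} by $(-q)_\infty$, identify the $\sigma_2(q)$ and $\sigma_{2,1}(q)$ factors with the generating functions of $b(n)$ and $b_1(n)$ via Theorems \ref{theorem main} and \ref{main theorem general ell}, reduce modulo $2$ (using $(q)_m\equiv(-q)_m\pmod 2$ to turn the inner sums into $X(q)$ and $\chi(q)$, and discarding the explicitly even term and the non-constant part of the theta series), and compare coefficients of $q^k$ for $k\ge 1$. The only point of divergence is notational: you use the definitions of $X(q)$ and $\chi(q)$ from the Remark (sums starting at $n=0$), which is also what the paper's proof implicitly does, despite the $n=1$ lower limits printed in the theorem statement.
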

\begin{proof}
	First note that
	$ \sum_{n=0}^{\infty}\frac{(-1)^n q^{n^2}}{(q)_{2n}} \equiv X(q) \pmod{2}$ and 	$ \sum_{n=0}^{\infty}\frac{(-1)^n q^{(n+1)^2}}{(q)_{2n+1}} \equiv \chi(q) \pmod{2}$.  
	Now multiplying both sides of \eqref{1st} by $(-q)_\infty$, then reducing the resulting identity modulo $2$ and then using the above congruences, we get
	\begin{align*}
		(-q)_{\infty}\sigma_2(q) X(q)-(-q)_{\infty}\sigma_{2,1}(q) \chi(q) \equiv  \sum_{n=-\infty}^{\infty}(-1)^n q^{n^2} \pmod{2} \equiv1 \pmod{2}.
	\end{align*}	
The result now follows from the fact that
	\begin{align*}
		(-q)_{\infty}\sigma_2(q) X(q)-(-q)_{\infty}\sigma_{2,1}(q) \chi(q) =  \sum_{k=0}^{\infty}\sum_{n=0}^{k}(b(n)c_X(k-n)-b_1(n)c_{\chi}(k-n) ) q^k.
	\end{align*} 
\end{proof}
\begin{remark}
From \cite[Theorem 1.3]{matsusaka}, we have $c_5(40n-1)\equiv c_{X_{10}}(n)\equiv\pmod{2}$ and $c_5(40n-9)\equiv c_{\chi_{10}}(n)\equiv\pmod{2}$,
where $c_5(n)$ is the Fourier coefficient of the Hauptmoduln $j_5(\tau)$, that is, of
\begin{align*}
j_5(\tau):=\frac{1}{q}\frac{(q;q)_{\infty}^6}{(q^5;q^5)_{\infty}^6}=\frac{1}{q}-6+\sum_{n=1}^{\infty}c_5(n)q^n.
\end{align*}
 Hence, we have
	\begin{align*}
	\sum_{n=0}^{k}b(n)c_5(40k-40n-1)+b_1(n)c_5(40k-40n-9)\equiv0\pmod{2}.
\end{align*}
\end{remark}

\section{Unrestricted Rascoe and non-Rascoe partitions}\label{analogues}
As alluded to in the introduction, we now remove, from the definition of Rascoe and non-Rascoe partitions, the restriction that the parts of such a partition be distinct, that is, we consider the \emph{unrestricted Rascoe and non-Rascoe partitions}. Let $c(n)$ denote the partitions of $n$ in which the number of parts is a part, and let $e(n)$ denote the partitions of $n$ in which the number of parts is not a part. We now obtain two different representations for their generating functions. Out of these, only the second representation of the generating function of $e(n)$ derived in the theorem below is known, namely, in  \cite{jtr2a}, where it is given without proof.
\begin{theorem}\label{rascoe analogue}
	\begin{align}
		\sum_{N=1}^{\infty} c(N) q^N 
		&= q+\sum_{n=2}^{\infty}  \sum_{m=0}^{n} \qbinom{2n-m-2}{n-m-1}\frac{q^{mn+2n-1}}{(q)_{m}}=\frac{q}{(q^2)_{\infty}},\label{ra1}\\
		\sum_{N=1}^{\infty} e(N) q^N	&= 1+\frac{q^2}{1-q}+\sum_{n=2}^{\infty}  \sum_{m=0}^{n} \qbinom{2n-m-2}{n-m}\frac{q^{mn+n}}{(q)_{m}}=\frac{1-q+q^2}{(q)_{\infty}}.\label{ra2}
	\end{align}
\end{theorem}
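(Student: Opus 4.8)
The plan is to prove each displayed equation as two separate assertions: the double-sum representation, by dissecting a Ferrers diagram exactly in the spirit of the proofs of Theorems~\ref{theoremrp-thm-1} and~\ref{after defn grgnr}, and the closed form, by a short bijection together with Euler's identity $\sum_{k\ge0}t^k/(q)_k=1/(t;q)_{\infty}$.

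For the combinatorial step, take a partition counted by $c(N)$, let $n$ be its number of parts, and let $m$ be the number of parts strictly larger than $n$, so the parts $\le n$ are $\lambda_{m+1}\ge\cdots\ge\lambda_n$. The requirement that $n$ itself be a part forces $\lambda_{m+1}=n$, and in particular $0\le m\le n-1$. I would then split the Ferrers diagram into three pieces: the $m$ parts exceeding $n$, each written as $n+1$ plus a non-negative excess, generated by $q^{m(n+1)}/(q)_m$; the single part $\lambda_{m+1}=n$, generated by $q^{n}$; and the remaining \emph{exactly} $n-m-1$ parts, each $\le n$, which upon removing one node from each become a partition inside an $(n-m-1)\times(n-1)$ box and so are generated by $q^{n-m-1}\qbinom{2n-m-2}{n-m-1}$. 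Multiplying the three factors, summing $m$ from $0$ to $n$ (the $m=n$ term is $0$ since $\qbinom{n-2}{-1}=0$), summing $n$ from $2$ to $\infty$, and adjoining the $n=1$ contribution — the partition $1$, generated by $q$ — yields the first equality in \eqref{ra1}. The argument for $e(N)$ runs in parallel: now ``$n$ is not a part'' forces every part $\le n$ to be in fact $\le n-1$ (unless $m=n$), so the middle piece disappears and the last piece becomes a partition into \emph{exactly} $n-m$ parts each $\le n-1$, generated by $q^{n-m}\qbinom{2n-m-2}{n-m}$; here one peels off the empty partition ($n=0$, the free term $1$) and the partitions consisting of a single part $\ge2$ ($n=1$, the term $q^2/(1-q)$), leaving the first equality in \eqref{ra2}.

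For the closed forms I would argue directly, bypassing the double sums. Deleting one part equal to $n$ from a partition of $N$ with exactly $n$ parts, one of which is $n$, is a bijection onto the partitions of $N-n$ into exactly $n-1$ parts; since the generating function for partitions into exactly $j$ parts is $q^{j}/(q)_j$, summation over $n$ gives
\begin{align*}
\sum_{N=1}^{\infty}c(N)q^N=\sum_{n=1}^{\infty}q^{n}\,\frac{q^{n-1}}{(q)_{n-1}}=q\sum_{j=0}^{\infty}\frac{q^{2j}}{(q)_j}=\frac{q}{(q^2)_{\infty}}
\end{align*}
by Euler's identity with $t=q^2$, which is the right-hand side of \eqref{ra1}. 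For \eqref{ra2}, since every partition of $N$ is counted by exactly one of $c$ and $e$, one has $c(N)+e(N)=p(N)$ (with $c(0)=0$ and $e(0)=1$, the latter accounting for the free term $1$), hence $\sum_{N}e(N)q^N=1/(q)_{\infty}-q/(q^2)_{\infty}=(1-q+q^2)/(q)_{\infty}$; combined with the double sum already established, this is \eqref{ra2}.

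The step I expect to demand the most care is the dissection bookkeeping: one must recognize that the bottom region has to contain \emph{exactly} the prescribed number of parts, not merely ``at most'' that many, since it is precisely this constraint that pins the total number of parts at $n$ — overlooking it shifts both the power of $q$ and the upper entry of the Gaussian polynomial. The marginal cases $n\in\{0,1\}$ and the vanishing of the $m=n$ term in the Rascoe count also require checking, but these are routine.
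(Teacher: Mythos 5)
Your proposal is correct, and your derivation of the two double sums coincides with the paper's: the same dissection into the $m$ parts exceeding $n$ (generated by $q^{m(n+1)}/(q)_m$), the designated part equal to $n$ when present, and the remaining exactly $n-m-1$, respectively $n-m$, parts in a box (generated by $q^{n-m-1}\qbinom{2n-m-2}{n-m-1}$, respectively $q^{n-m}\qbinom{2n-m-2}{n-m}$), with the same handling of the boundary cases $n=0,1$ and the vanishing $m=n$ term in \eqref{ra1}; your closing remark that the bottom region must have \emph{exactly} the prescribed number of parts is precisely the point the paper's argument also hinges on, and you get it right. The only genuine divergence is the closed form in \eqref{ra1}: the paper removes the first occurrence of the part $k$ (where $k$ is the number of parts) and redistributes its nodes, one to each of the remaining $k-1$ parts plus one new smallest part, producing a global bijection with partitions containing exactly one part equal to $1$, whose generating function is read off directly as $q/(q^2)_{\infty}$; you instead delete the part equal to $n$, which for fixed $n$ is a bijection onto partitions into exactly $n-1$ parts, and then sum $\sum_{n\ge1}q^{2n-1}/(q)_{n-1}$ using Euler's identity $\sum_{k\ge0}t^k/(q)_k=1/(t;q)_{\infty}$ at $t=q^2$. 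Both are valid; the paper's route is purely bijective and needs no series identity, while yours replaces the global bijection by a one-line analytic summation. Your passage to the closed form in \eqref{ra2} via $c(N)+e(N)=p(N)$ (with $e(0)=1$) is exactly the paper's.
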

\begin{proof}
We begin by showing that the double sum in \eqref{ra1} generates $c(N)$. To that effect, let $n$ denote the number of parts of a partition $\pi$ of $N$  and $m$ denote the number of parts greater than $n$. Note that $n$ itself is a part of $\pi$. We initially assume $n\geq2$. 

The Ferrers diagram of $\pi$ consists of a rectangle of size $m\times(n+1)$ lying just above the first occurrence of $n$, and a partition next to the right of the rectangle, generated by $1/(q)_m$.

Moreover, the portion lying below the first occurrence of $n$ constitutes a sub-partition into exactly $n-m-1$ parts each of which is less than or equal to $n$, which is generated by \cite[p.~34]{gea} $q^{n-m-1}\qbinom{2n-m-2}{n-m-1}$. Therefore, part of the complete generating function we get this way is given by
\begin{equation}\label{part}
\sum_{n=2}^{\infty}  \sum_{m=0}^{n} \qbinom{2n-m-2}{n-m-1}\frac{q^{mn+2n-1}}{(q)_{m}}.
\end{equation}
The case $n=0$, corresponding to the empty partition, is not a part of the generating function for $c(N)$, and is considered, instead, in the one for $e(N)$. Moreover, the only partition corresponding to $n=1$ is $1$ itself. Therefore, its contribution towards the generating function of $c(N)$ is $q$. Hence, adding it to \eqref{part} gives the first equality in \eqref{ra1}. 

To derive the equality between the extreme sides of \eqref{ra1}, we show that the partitions enumerated by $c(N)$ can be bijectively mapped to the set of partitions in which $1$ appears exactly once. To see this, let $k$ be the number of parts in a partition $\pi$ enumerated by $c(N)$. Then there is at least one part in $\pi$ with $k$ nodes in its Ferrers diagram. We remove the $k$ nodes corresponding to the first occurrence of this part and distribute them, one each, to the remaining $k-1$ parts, with an additional one node conceived as the smallest part of the modified partition. This, clearly, is a partition enumerated by $q/(q^2)_{\infty}$. Since the process is reversible, we get a bijection, which proves that $c(N)$ is generated by $q/(q^2)_{\infty}$. This completes the proof of \eqref{ra1}. 

The proof of first equality of \eqref{ra2} is along the similar lines. The only thing to note is that if a partition generated by $e(N)$ has a single part, say $k$, then $k>1$. Such partitions are generated by $q^2/(1-q)$. Finally, the equality between the extreme sides of \eqref{ra2} follows from the fact that $c(N)+e(N)=p(N)$ and the equality between the extreme sides of \eqref{ra1}.
\end{proof}

\begin{remark}
George Beck \cite{jtr2a} has conjectured that $e(n)$ is the total number of distinct parts of each partition of $2n+2$ with rank $n+1$.
\end{remark}
\section{Concluding Remarks}\label{cr} 
The study of $\sigma_2(q)$ undertaken in this paper arose by asking ourselves a natural question - \emph{if the generating function of the excess number of partitions into distinct parts with even rank over those with odd rank, that is, $\sigma(q)$, has rich properties from the point of view of partition theory, algebraic number theory, Maass wave forms as well as quantum modular forms, it might be interesting to investigate a similar function starting with partitions in which the difference between any two parts is at least $2$.}

As demonstrated in this paper, $\sigma_2(q)$ does satisfy beautiful properties. For example, it is intimately connected to an interesting class of partition functions, namely, the \emph{non-Rascoe partitions}, and this connection not only enables us to precisely understand the parity of $b(n)$, the number of non-Rascoe partitions of $n$, but also help represent $b(n)$ as a sum involving the number of partitions where only the smallest part is allowed to repeat and its frequency is the smallest part itself.

A result analogous to Theorem \ref{theorem main}, but for $\sigma(q)$, was obtained very recently in \cite[Theorem 1.1]{donato}, namely,
\begin{align}\label{sigma mex gf}
\frac{(-q)_{\infty}}{(q)_{\infty}}\sigma(q)=\sum_{n=0}^{\infty}\sigma\widehat{\textup{mex}}(n)q^n,
\end{align}
where $\sigma\widehat{\textup{mex}}(n):=\sum_{\pi\in\overline{P}(n)}\widehat{\textup{mex}}(n)$ with $\widehat{\textup{mex}}(\pi)$ denoting the minimal excludant of the overlined parts of an overpartition $\pi$.	(Overpartitions are partitions in which the first (or equivalently the final) occurrence of a part may or may not be overlined.)

While we have initiated the study of $\sigma_2(q)$ as well as of $b(n)$, a lot of questions still remain to be seen. We have collected them below. Each of these problems is worth exploring.  

\begin{enumerate}
	\item In his Lost Notebook, Ramanujan gave the following new representation of $\sigma(q)$ (see \cite[Equation (1.6)]{andrews1986} for a proof):
	\begin{equation}\label{sigma another}
	\sigma(q)=1+q\sum_{n=0}^{\infty}(q)_n(-q)^n.
	\end{equation}
	Our first problem asks whether there exists an analogous representation for $\sigma_2(q)$. Our attempts toward finding such a representation did not work. The importance of having it, if at all it exists, is highlighted in (2) below.
	
	\item Find modular properties of $\sigma_2(q)$. As shown by Zagier \cite[p.~663]{zagierqmf}, the function $q^{1/24}\sigma(q)$ is one of the first examples of what he calls a \emph{quantum modular form}. This establishes the modularity of $\sigma(q)$. What kind of modular behavior does $\sigma_2(q)$ exhibit?
	
	 Cohen established an analogue of \eqref{sigma another} for $\sigma^{*}(q)$, namely,
	$\sigma^{*}(q)=-2\sum_{n=0}^{\infty}q^{n+1}(q^2;q^2)_n$, and then discovered the beautiful relation $\sigma(q)=-\sigma^{*}(q^{-1})$, which holds whenever $q$ is a root of unity.  
	
	Zagier \cite[p.~664]{zagierqmf} used this relation to show that if $q=e^{-t}$, then as $t\to0^{+}$,
	\begin{equation*}
	\sigma(q)=2-2t+5t^2-\frac{55}{3}t^3+\frac{1073}{12}t^4-\frac{32671}{60}t^5+\frac{286333}{72}t^6-\cdots,
	\end{equation*} 
and Doneto \cite[Theorem 1.3]{donato} used this result along with \eqref{sigma mex gf} to show that $\sigma\widehat{\textup{mex}}(n)\sim e^{\pi\sqrt{n}}/(4n)$ as $n\to\infty$. 

Thus, if there exists a representation for $\sigma_2(q)$ analogous to \eqref{sigma another} and one is able to proceed in a similar manner as that for $\sigma(q)$, it may allow us to find an asymptotic formula of $b(n)$ as $n\to\infty$.

After shifting from unrestricted partitions to partitions into distinct parts, one observes a stark difference in the modular nature of the concerned objects. For example, in the first case, the rank parity function is the third order mock theta function $f(q)$ whereas in the second, it is the quantum modular form $\sigma(q)$. Similarly, while the generating function of unrestricted non-Rascoe partitions, considered in Section \ref{analogues}, is simply a linear combination of simple modular objects, the generating function of non-Rascoe partitions themselves is $(-q)_\infty\sigma_2(q)$ whose modularity is yet to be understood.

\item If $p_d(n)$ denotes the number of partitions of $n$ into distinct parts, then Theorems \ref{theoremrrp} and \ref{theorem main} imply that
\begin{equation*}
b(n)=\sum_{j=0}^{n}\sum_{m=-\infty}^{\infty}(-1)^{m-1}R(m,j)p_d(n-j).
\end{equation*}
Our proof of Theorem \ref{theorem main} involves a mix of analytic and combinatorial methods. It would be of interest to see if the above equation could be proved directly.

\item It seems that the arithmetic progression $29k+21$ in our mod $4$ congruence conjecture appears from nowhere. Probably its appearance can be better understood once the modularity of $\sigma_2(q)$ is established. If true, what are the implications of such a congruence? We weren't able to make sense of why the exceptions to the condition $k\neq29m$ given in Remark \ref{exceptions} exist. Do the values of $m$ given there hint at something important?

Our search for congruence satisfied  by $b_1(n)$ did not yield anything. It seems highly unlikely that there exists a congruence for $b_\ell(n)$ for higher $\ell$. 

On a side note, coefficients of several mock theta functions over certain arithmetic progressions obey mod $4$ congruences; see, for example, \cite{brietzke}, \cite{chan-mao}, \cite{chen-garvan}, \cite{hu-liu-yao} and \cite{mao}.
\item Can $\sigma_2(q)$ be represented as a Hecke-Rogers series? Theorem \ref{hecke-type} hints at such a possibility, however, we weren't able to further simplify our representations. If not in general, can the expressions in Theorem \ref{hecke-type} be reduced to simpler objects at least modulo $4$?  There exist precedents where complicated objects reduced to simpler ones modulo $4$. 

For example, Andrews, Schultz, Yee and the first author \cite[Theorem 1.3]{adsy1} showed that the following series involving the little $q$-Jacobi polynomials which arises in a representation of the generating function of $\overline{p}_\omega(n)$, that is, the number of overpartitions of $n$ in which all odd parts are less than twice the smallest parts, obeys the congruence
\begin{align*}
\sum_{n=0}^{\infty}\frac{(q;q^2)_n(-q)^n}{(-q;q^2)_{n}(1+q^{2n})}\sum_{j=0}^{2n}\frac{(-1;q)_j}{(q;q)_j}(-q)^j\equiv\frac{1}{2} \frac{(q;q^2)_{\infty}}{(-q;q^2)_{\infty}} \pmod{4}.
\end{align*}
However, mimicking its simpler proof given in \cite{dixit-ac} for either of the representations in Theorem \ref{hecke-type}, unfortunately, did not work. 
\item Give $q$-series proofs of the results in Theorem \ref{rascoe analogue}.
\end{enumerate}

\begin{center}
\textbf{Acknowledgements}
\end{center}
Our initial \emph{Mathematica} code gave first $30000$ values in the sequence $\{b(n)\}_{n=1}^{\infty}$ to verify Conjecture 1, but thanks to M. S. A. Alam Khan of IIT Gandhinagar for his efficient \emph{Mathematica} code, which helped us verify our conjecture up $n\leq10^5$. The first author is supported by the Swarnajayanti Fellowship grant SB/SJF/2021-22/08 of ANRF (Government of India). The third author is supported by NBHM-DAE Fellowship (Government of India). Both the authors sincerely thank the respective funding agencies for their support.

\end{document}